\documentclass[11pt,reqno]{amsart}
\usepackage[margin=1.1in]{geometry}
\usepackage{amsmath,amssymb,amsthm,mathtools}
\usepackage[activate={true,nocompatibility},final,tracking=false,kerning=false,spacing=false,factor=1100,stretch=10,shrink=10,expansion=false]{microtype}
\usepackage{enumitem}
\usepackage{xcolor}
\usepackage{tikz}
\usetikzlibrary{cd}
\usepackage[colorlinks=true,linkcolor=blue!60!black,citecolor=blue!60!black,urlcolor=blue!60!black]{hyperref}

\theoremstyle{plain}
\newtheorem{theorem}{Theorem}[section]
\newtheorem*{theoremstar}{Theorem}
\newtheorem{proposition}[theorem]{Proposition}
\newtheorem{lemma}[theorem]{Lemma}
\newtheorem{corollary}[theorem]{Corollary}
\theoremstyle{definition}
\newtheorem{definition}[theorem]{Definition}
\newtheorem{example}[theorem]{Example}
\newtheorem{remark}[theorem]{Remark}

\newcommand{\g}{\mathfrak{g}}
\newcommand{\britume}{\mathfrak{t}}
\newcommand{\gdual}{\mathfrak{g}^{*}}
\newcommand{\tdual}{\mathfrak{t}^{*}}
\newcommand{\R}{\mathbb{R}}

\newcommand{\C}{\mathbb{C}}
\newcommand{\dF}{d_{F}}
\newcommand{\Om}{\Omega}
\newcommand{\Ham}{\mathrm{Ham}}
\newcommand{\Lie}{L_{\infty}}
\newcommand{\vperp}{\varpi}
\newcommand{\io}{\iota}
\newcommand{\Lder}{\mathcal{L}}
\newcommand{\vs}{\varsigma}

\newcommand{\ip}[2]{\langle #1,\,#2\rangle}
\newcommand{\pair}[2]{\left\langle #1,\,#2\right\rangle}
\newcommand{\dG}{d_{G}}
\newcommand{\dGF}{d_{G}^{F}}
\newcommand{\ka}{\kappa}

\numberwithin{equation}{section}

\begin{document}

\title[Reduction of relative multisymplectic manifolds]{Reduction of relative multisymplectic manifolds}

%% ---------- PLACEHOLDER: author & affiliation ----------
\author{Dinamo Djounvouna}
\address{Department of Mathematics, University of Manitoba, Winnipeg, Canada}
\email{djounvod@myumanitoba.ca}
%% --------------------------------------------------------

\subjclass[2020]{Primary 53D05, 53D20; Secondary 70S05, 70S10}
\keywords{relative multisymplectic geometry, mapping cone, moment map,
Marsden--Weinstein reduction, Duistermaat--Heckman theorem, equivariant
localization, split moment map}

\begin{abstract}
We extend the multisymplectic reduction theory of Blacker -- itself the extension
of Marsden--Weinstein--Meyer reduction to $k$-plectic manifolds -- to the setting
of \emph{relative} multisymplectic geometry, in which a smooth map $F\colon M\to N$
carries a closed nondegenerate relative $(k{+}1)$-form
$\vperp=(\omega,\eta)$ in the mapping-cone complex of $F$. We introduce the Leibniz
algebra of relative Hamiltonian pairs and the associated relative moment maps
$\mu\in\Om^{k-1}(F,\gdual)$, and prove a relative multisymplectic reduction
theorem: for a closed equivariant level $\phi\in\Om^{k-1}(F,\gdual)$, the level
pair of $\mu$ descends to a reduced smooth map $F_\phi\colon M_\phi\to N_\phi$
carrying a unique reduced closed relative form $\vperp_\phi$; remarkably, the
horizontality of the trivializing component is forced by the relative closedness
of the level. We further prove relative analogues of the reduction of dynamics,
of the structure theory of split moment maps $\mu=\nu\cdot\ka$ -- for which the
splitting datum is a one-step cocycle in the relative Cartan model, so that split
relative Hamiltonian $G$-spaces carry canonical relative homotopy moment maps --
and of the Duistermaat--Heckman-type variation formula: with respect to suitable
conjugate distributions, the reduced relative class satisfies
$\partial_\lambda[\vperp_\psi]=\ip{c}{\lambda}\cdot[\ka_\psi]$, where $c$ is the
Chern form of the target model bundle acting through the natural
$\Om(N)$-module structure of the mapping cone. Finally, we transport the exact
stationary phase approximation to the target component and prove a genuinely
relative localization constraint: the fixed-point contributions of the pulled-back
split package on the source cancel identically.
\end{abstract}

\maketitle
\setcounter{tocdepth}{2}
\tableofcontents

%% ============================================================
\section{Introduction}\label{sec:intro}
%% ============================================================

The Marsden--Weinstein--Meyer theorem \cite{MW,Meyer} removes symmetry from a
symplectic manifold with Hamiltonian $G$-action along a level set of the moment
map. Blacker \cite{Blacker} extended this mechanism to multisymplectic
($k$-plectic) manifolds: a moment map is now a $\gdual$-valued $(k{-}1)$-form,
levels are prescribed closed forms $\phi\in\Om^{k-1}(M,\gdual)$, and the reduced
space carries a canonical closed -- though possibly degenerate -- $(k{+}1)$-form.
The same paper develops the dependence of the reduced space on the level
(a Duistermaat--Heckman-type variation formula \cite{DH}) and equivariant
localization results for the tractable class of \emph{split} moment maps
$\mu=\nu\wedge\eta$.

In parallel, the author has developed \emph{relative} multisymplectic geometry
\cite{DjThesis,DjJGP}: the geometry of a smooth map $F\colon M\to N$ equipped with
a closed nondegenerate relative $(k{+}1)$-form
\[
  \vperp=(\omega,\eta)\ \in\ \Om^{k+1}(F):=\Om^{k+1}(N)\oplus\Om^{k}(M),
  \qquad
  \dF(\omega,\eta)=(d\omega,\,F^*\omega-d\eta)=0,
\]
together with the accompanying Lie $k$-algebras of relative observables, relative
homotopy moment maps \cite{RelHMM}, and a spectrum of applications
\cite{RelApps}. The guiding example is the quasi-Hamiltonian geometry of
group-valued moment maps \cite{AMM}, where $\vperp=(\eta_G,\omega)$ is built from
the Cartan $3$-form on the target group.

The purpose of the present article is to carry out, in full, the program of
\cite{Blacker} in the relative setting: \emph{reduction of relative
multisymplectic manifolds}. Both the source and the target are reduced
simultaneously, and the reduced object is again a smooth map with a closed
relative form. Three structural phenomena, invisible in the absolute theory,
organize the results:
\begin{enumerate}[label=(\alph*),leftmargin=2.2em]
\item the trivializing component of the reduced relative form exists
\emph{because} the level is relatively closed: the $M$-side horizontality in the
reduction theorem is exactly the second component of $\dF\phi=0$;
\item the mapping cone $\Om^\bullet(F)$ is a differential graded module over
$(\Om^\bullet(N),d)$, and all split-moment-map theory -- including the
Duistermaat--Heckman variation and the localization package -- is governed by
this module structure;
\item split relative moment maps are one-step cocycles in the relative Cartan
model of \cite{RelHMM}, so that split relative Hamiltonian $G$-spaces carry
canonical relative \emph{homotopy} moment maps refining the Leibniz-algebraic
comoments considered here.
\end{enumerate}

\subsection{Main results}
Let $G$ be a compact Lie group acting on $F$ (i.e.\ on $M$ and $N$ with $F$
equivariant) and preserving a relative pre-$k$-plectic form $\vperp$, with
relative moment map $\mu\in\Om^{k-1}(F,\gdual)$
(Definition~\ref{def:relmoment}). Our main theorem is:

\begin{theoremstar}[Relative multisymplectic reduction; Theorem~\ref{thm:reduction}]
Let $\phi\in\Om^{k-1}(F,\gdual)$ be $\dF$-closed and $G$-equivariant, and let
\[
  L_N=\{y\in N:\mu_N(y)=\phi_N(y)\},
  \qquad
  L_M=F^{-1}(L_N)\cap\{x\in M:\mu_M(x)=\phi_M(x)\}
\]
be the level pair. If $L_N\subseteq N$ and $L_M\subseteq M$ are embedded
submanifolds on which $G$ acts freely, then $F$ descends to a smooth map
$F_\phi\colon M_\phi\to N_\phi$ between the quotients, and there is a unique
relative $(k{+}1)$-form $\vperp_\phi=(\omega_\phi,\eta_\phi)$ on $F_\phi$ with
\[
  i_N^*\omega=\pi_N^*\omega_\phi,
  \qquad
  i_M^*\eta=\pi_M^*\eta_\phi;
\]
moreover $\vperp_\phi$ is $d_{F_\phi}$-closed.
\end{theoremstar}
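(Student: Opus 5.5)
We follow Blacker's absolute argument on each component and use the relative closedness of $\phi$ to glue them. The definitions are as follows. A relative moment map $\mu=(\mu_N,\mu_M)$ satisfies $\dF\ip{\mu}{\xi}=-\iota_{\xi}\vperp$ for each $\xi\in\g$; componentwise this reads
\[
d\mu_N^\xi=-\iota_{\xi_N}\omega,\qquad F^*\mu_N^\xi-d\mu_M^\xi=-\iota_{\xi_M}\eta
\]
(up to the sign conventions of Definition~\ref{def:relmoment}). The level $\phi$ satisfies $d\phi_N=0$ and $F^*\phi_N=d\phi_M$.

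\textbf{Step 1: the reduced map.} By $G$-equivariance of $F$, we have $F(L_M)\subseteq L_N$, and $F|_{L_M}$ is equivariant. Since the actions are free and proper ($G$ is compact) on the embedded submanifolds $L_M$ and $L_N$, the quotients $M_\phi=L_M/G$ and $N_\phi=L_N/G$ are manifolds and $\pi_M,\pi_N$ are submersions. The map $\pi_N\circ F|_{L_M}$ is $G$-invariant, so it descends to a smooth $F_\phi$ with $F_\phi\circ\pi_M=\pi_N\circ F|_{L_M}$.

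\textbf{Step 2: basicness.} A form on $L$ descends uniquely along the principal bundle $\pi$ exactly when it is $G$-invariant and horizontal. Uniqueness is then immediate, because $\pi^*$ is injective.

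For $\omega$: the restriction $i_N^*\omega$ is invariant. For horizontality, on $L_N$ we have $\mu_N=\phi_N$, so $i_N^*\iota_{\xi_N}\omega=-d\, i_N^*\mu_N^\xi=-d\, i_N^*\phi_N^\xi$. This vanishes because $d\phi_N=0$ and restriction commutes with $d$. Here, as in Blacker, we need the identity $i_N^*\iota_{\xi_N}\omega=\iota_{\xi}i_N^*\omega$, which holds since $\xi_N$ is tangent to $L_N$ by invariance.

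For $\eta$: we compute $i_M^*\iota_{\xi_M}\eta=-i_M^*F^*\mu_N^\xi+d\,i_M^*\mu_M^\xi$. Because $F(L_M)\subseteq L_N$, we may replace $\mu_N$ by $\phi_N$ in the first term and $\mu_M$ by $\phi_M$ in the second. The result is $i_M^*(d\phi_M-F^*\phi_N)^\xi$, which is zero by the second component of $\dF\phi=0$. This is the key point of the proof: the horizontality of $\eta$ is forced by the relative closedness of $\phi$. The only subtlety is that replacing $\mu$ by $\phi$ before applying $d$ requires the restrictions to agree as forms on $L_M$. I read ``$\mu_M(x)=\phi_M(x)$'' pointwise, as equality of forms on $T_xM$, so the restricted forms agree and so do their differentials.

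\textbf{Step 3: closedness.} We must show $d\omega_\phi=0$ and $F_\phi^*\omega_\phi=d\eta_\phi$. Pull both back by the injective maps $\pi_N^*$ and $\pi_M^*$. First, $\pi_N^*d\omega_\phi=i_N^*d\omega=0$. Second,
\[
\pi_M^*F_\phi^*\omega_\phi=(F|_{L_M})^*i_N^*\omega=i_M^*F^*\omega=i_M^*d\eta=\pi_M^*d\eta_\phi.
\]

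The expected main obstacle is Step 2 for $\eta$. Getting the signs of the relative moment map convention to match those in $\dF\phi$ is where the argument could break; we also have to check that the fundamental vector fields are tangent to the level sets. The latter follows from equivariance of $\mu$ and $\phi$, since $G$ preserves $L_M$ and $L_N$.
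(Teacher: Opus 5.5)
Your proposal is correct and follows the paper's proof essentially step for step: you descend $F_L$ through the free quotients, get target horizontality from the $N$-component of the Hamilton equation together with $d\phi_N=0$, get source horizontality from the $M$-component of $\dF\phi=0$ via the pointwise coincidence of $\mu$ and $\phi$ on the level pair, and prove closedness by pulling back along the injective maps $\pi_N^*,\pi_M^*$. Two details are off but harmless: in the paper's conventions the source Hamilton equation reads $\io_{v_\xi^M}\eta=F^*\mu_{N,\xi}-d\mu_{M,\xi}$, the opposite sign to yours, because relative contraction carries a minus sign on the source slot; and $F(L_M)\subseteq L_N$ holds by the definition of $L_M$, not by equivariance.
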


As in the absolute case the reduced form may be degenerate, and the hypotheses can
be weakened considerably (Remark~\ref{rem:weaken}). We also prove the relative
reduction of dynamics (Theorem~\ref{thm:dynamics}): invariant Hamiltonian
$F$-pairs tangent to the level pair descend, together with their Hamilton
equations.

For split relative moment maps $\mu=\nu\cdot\ka$ -- where
$\nu\in C^\infty(N,\gdual)$, $\ka\in\Om^{k-1}(F)$ is $\dF$-closed, and $\cdot$
denotes the module action of Lemma~\ref{lem:module} -- we prove: the structure
theory of levels (Proposition~\ref{prop:splitlevels}); the fixed-point theorem
for torus actions (Proposition~\ref{prop:fixedpoints}); and the following
refinement of \cite[Prop.~3.10]{Blacker}, which connects the present Leibniz-type
theory with the $L_\infty$-theory of \cite{RelHMM}:

\begin{theoremstar}[Theorem~\ref{thm:onesteptie}]
If the splitting $\mu=\nu\cdot\ka$ is basic \textup{(}$\ka$ invariant and
horizontal\textup{)}, then $\vperp-\mu$ is a one-step cocycle in the relative
Cartan model; consequently a split relative Hamiltonian $G$-space carries a
canonical relative homotopy moment map with components
$f_j=\vs(j)\,\io(v_{\xi_1}\wedge\dots\wedge v_{\xi_{j-1}})\mu_{\xi_j}$.
\end{theoremstar}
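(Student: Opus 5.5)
We work in the relative Cartan model of \cite{RelHMM}. This is the complex $(S(\gdual)\otimes\Om^\bullet(F))^G$ with differential $\dGF(\alpha)(\xi)=\dF\alpha(\xi)-\io(v_\xi)\alpha(\xi)$, where $\io(v_\xi)$ acts componentwise on the pair $(\omega,\eta)$; it commutes with $\dF$ because $F$ is equivariant. A ``one-step cocycle'' is an element $\vperp-\mu$ with $\mu$ linear in $\xi$ and $\dGF(\vperp-\mu)=0$. Sorting by polynomial degree in $\xi$, this is equivalent to three conditions:
\[
\dF\vperp=0,\qquad \dF\mu_\xi=-\io(v_\xi)\vperp,\qquad \io(v_\xi)\mu_\xi=0 .
\]
The first is closedness of $\vperp$. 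The second is the moment map equation of Definition~\ref{def:relmoment}, after fixing signs. So the whole content of the theorem is the quadratic condition $\io(v_\xi)\mu_\xi=0$ for all $\xi$, together with $G$-equivariance of $\mu$ so that $\vperp-\mu$ is a $G$-invariant element.

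\textbf{Step 1: the quadratic term.} I would write $\mu_\xi=\ip{\nu}{\xi}\cdot\ka$ using the module action of Lemma~\ref{lem:module}. For a function $f$ on $N$ this action is $f\cdot(\ka_N,\ka_M)=(f\ka_N,(F^*f)\ka_M)$. Since $\io(v_\xi)$ is $C^\infty$-linear and compatible with the module action (because $F_*v^M_\xi=v^N_\xi$), we get
\[
\io(v_\xi)\mu_\xi=\ip{\nu}{\xi}\cdot\io(v_\xi)\ka=0
\]
by horizontality of $\ka$ on both components. Invariance of $\ka$ combined with equivariance of $\nu$ gives equivariance of $\mu$. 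This is the precise analogue of \cite[Prop.~3.10]{Blacker}. The second cocycle condition needs no new input, since $\mu$ is assumed to be a relative moment map. If the paper's definition of a split moment map only demands $\dF\ka=0$ and the moment equation, I would check the identity $\dF(\nu\cdot\ka)=d\nu\cdot\ka$ by the Leibniz rule for the dg-module structure and confirm consistency.

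\textbf{Step 2: homotopy moment map.} I would invoke the correspondence in \cite{RelHMM} between cocycles in the relative Cartan model and relative homotopy moment maps. In the absolute case, an equivariant extension $\vperp-\mu$ with no higher terms produces $f_j$ by iterated contraction, with the Koszul-type sign $\vs(j)$. The verification then reduces to the homotopy moment map equation
\[
\dF f_j = f_{j-1}(\partial(\cdot))\pm\vs(j)\io(v_{\xi_1}\wedge\dots\wedge v_{\xi_j})\vperp .
\]
I would prove this by induction on $j$ using the following ingredients: the Cartan identities $[\Lder_v,\io_w]=\io_{[v,w]}$ and $\Lder_v=\dF\io_v+\io_v\dF$ on the mapping cone, which hold because $\io$ and $\Lder$ act componentwise and commute with $F^*$ on related vector fields; invariance of $\mu$ (giving $\Lder_{v_\xi}\mu_\eta=\mu_{[\xi,\eta]}$); and the moment equation. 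Multiple contractions of $\mu$ survive only through the antisymmetrized terms. The vanishing $\io(v_\xi)\mu_\xi=0$, polarized to $\io(v_\xi)\mu_\eta=-\io(v_\eta)\mu_\xi$, is exactly what makes the $f_j$ totally antisymmetric and well defined on $\Lambda^j\g$.

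\textbf{Main obstacle.} The hard part is the sign bookkeeping in Step 2. Matching the relative differential $\dF$, which has a sign twist in its second component, with the relative $L_\infty$ conventions of \cite{RelHMM} and producing exactly $\vs(j)$ requires care. I would handle it by reducing to the known absolute formula applied to the cone as an $\Om(N)$-dg-module. Concretely, all operations used ($\io$, $\Lder$, $\dF$) are graded derivations of the module satisfying the same Cartan calculus, so the absolute proof transfers verbatim.
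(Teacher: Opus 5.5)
Your proposal is correct and matches the paper's proof. The paper likewise reduces the cocycle condition, via \cite[Lem.~6.4]{RelHMM}, to three checks: the Hamilton equation, the equivariance $\Lder_{v_\xi}\mu_\zeta=\mu_{[\xi,\zeta]}$, and $\io_{v_\xi}\mu_\xi=0$. It proves the last exactly as you do, from Lemma~\ref{lem:module}(ii) and the horizontality of $\ka$, and then simply cites \cite[Thm.~6.5]{RelHMM}, so your inductive re-verification in Step~2 is optional. One small correction: equivariance of $\mu$ is part of the definition of a relative moment map, so take it directly from the hypothesis rather than deriving it from an equivariance of $\nu$ that is not assumed.
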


Section~\ref{sec:variation} develops the variation of the reduced space. After
establishing the relative variation formula (Lemma~\ref{lem:variation}), we
introduce conjugate distributions adapted to the relative setting and prove:

\begin{theoremstar}[Variation of the relative reduced space;
Theorem~\ref{thm:DH}]
For a torus $T$, a relative $k$-plectic Hamiltonian $T$-space with invariant
level $\phi$, a $T$-invariant $\dF$-closed $\ka\in\Om^{k-1}(F)$, and reduction
parameters $\psi\in C\cdot\ka+\phi$, if the level pairs trivialize as a family
modeled on that of $\phi$ and the fundamental distribution on $M$ is strongly
conjugate to a distribution $\underline{\britume}^*$ with respect to $\vperp$ and
$\ka$, then
\[
  \partial_\lambda[\vperp_\psi]\;=\;\ip{c}{\lambda}\cdot[\ka_\psi],
  \qquad\lambda\in C,
\]
in $H^{k+1}(\Om(F_\psi))$, where $c\in\Om^2(N_\phi,\britume)$ is the Chern form
of the target model bundle $L_N\to N_\phi$ and $\cdot$ is the module action in
cohomology. The source-side Chern data is the pullback $F_\phi^*c$, so that no
independent invariant appears on $M$.
\end{theoremstar}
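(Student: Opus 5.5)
We follow Blacker's proof of the absolute Duistermaat--Heckman variation formula, applied componentwise to the mapping cone and organized by the $\Om(N)$-module structure of Lemma~\ref{lem:module}.

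\textbf{Step 1: fix a model and write the family of forms.} Using the trivialization hypothesis, we fix the model level pair $(L_M,L_N)$ of $\phi$ and $T$-equivariant diffeomorphisms $L^\psi_N\cong L_N$ and $L^\psi_M\cong L_M$, commuting with $F$ and depending smoothly on $\lambda\in C$, where $\psi=\lambda\cdot\ka+\phi$. This identifies every reduced map $F_\psi$ with the fixed map $F_\phi\colon M_\phi\to N_\phi$. The reduced forms then become a smooth family $\vperp_\psi(\lambda)\in\Om^{k+1}(F_\phi)$, and all classes can be compared in one cohomology group.

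\textbf{Step 2: the relative variation formula.} By Lemma~\ref{lem:variation}, the derivative $\partial_\lambda$ of the pulled-back family on the level pair equals $d_F$ of a transgression term, plus a term coming from the variation of the level. We choose a connection $\theta\in\Om^1(L_N,\britume)$ on the principal $T$-bundle $L_N\to N_\phi$. Its pullback $F^*\theta$ is automatically a connection on $L_M\to M_\phi$, and its curvature is $F_\phi^*c$. This is the source of the final sentence of the theorem: no independent Chern datum appears on $M$.

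\textbf{Step 3: make the transgression basic, and use conjugacy.} The transgression term is built from $\ip{\theta}{\lambda}\cdot\ka$ (target component) and $F^*\ip{\theta}{\lambda}\wedge\ka_M$ (source component). In the absolute case one writes $\partial_\lambda\omega = d(\ip{\theta}{\lambda}\wedge\ka)+\ip{c}{\lambda}\wedge\ka$ modulo non-basic terms. Here we apply the Leibniz rule for the module action, $d_F(a\cdot\beta)=da\cdot\beta\pm a\cdot d_F\beta$, together with $d_F\ka=0$. This gives
\[
\partial_\lambda\vperp \;=\; d_F\bigl(\ip{\theta}{\lambda}\cdot\ka\bigr)\;-\;\ip{d\theta}{\lambda}\cdot\ka,
\]
up to vertical pieces. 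Two facts are needed to remove those pieces:
\begin{itemize}[leftmargin=2.2em]
\item On $N$, the vertical pieces are killed by the Cartan relation $d\theta=\pi_N^*c$ for a torus.
\item On $M$, strong conjugacy of the fundamental distribution to $\underline{\britume}^*$ with respect to $\vperp$ and $\ka$ is used to arrange that $\iota_{v_\xi}\ka$ and the corresponding contraction of $\eta$ vanish along the transversal directions.
\end{itemize}
Once these pieces are gone, every form appearing is basic and descends. Passing to $H^{k+1}(\Om(F_\psi))$ then gives $\partial_\lambda[\vperp_\psi]=\ip{c}{\lambda}\cdot[\ka_\psi]$, up to a sign convention absorbed in the definition of $c$.

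\textbf{Expected main obstacle.} The hardest step is the source side of Step 3. We must check that the $M$-component of the transgression, involving $F^*\theta$ and $\ka_M$, is horizontal, and that the extra term $F^*(\ip{\theta}{\lambda}\,\ka_N)$ arising in the second cone component cancels correctly against $d\eta$-type terms. We would first expand $d_F$ of the pair in components and use $F^*\ka_N=d\ka_M$, which is the second component of $d_F\ka=0$, to match terms. Strong conjugacy is then invoked only to kill the one remaining vertical contraction on $M$.
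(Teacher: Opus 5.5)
There is a genuine gap in your Step~3. You never justify that the transgression $\Phi^*(\io_{\widetilde\psi}\vperp)$ of Lemma~\ref{lem:variation} is ``built from'' $\ip{\theta}{\lambda}\cdot\ka$, and with an arbitrarily chosen connection $\theta$ nothing forces this. The Cartan relation $d\theta=\pi_N^*c$ only says the curvature is basic; it says nothing about $\io_{\widetilde\psi_N}\omega$. In the multisymplectic setting the pointwise level condition does not control the vertical part of $\io_{\widetilde\psi}\vperp$ either.

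That control is exactly what strong conjugacy provides, and it acts on \emph{both} components at once, not merely by killing a leftover contraction on $M$. The variation of the level is implemented by the conjugate pair, $\widetilde\psi=v_\lambda$, so \eqref{eq:conjugation} gives $\Phi^*(\io_{\widetilde\psi}\vperp)=\Phi^*(a_\lambda\cdot\ka)$ exactly. Here:
\begin{enumerate}[label=\textup{(\roman*)},leftmargin=2.2em]
\item $i_N^*a$ is itself a connection on $L_N\to N_\phi$; its normalization comes from contracting \eqref{eq:conjugation} with $v_\xi$ and comparing with the Hamilton equation;
\item its source shadow is forced to be $F_L^*i_N^*a$ by the $M$-component of \eqref{eq:conjugation};
\item horizontality of $\Phi^*\ka$ follows from the same identity.
\end{enumerate}
Only after this identification may you replace $i_N^*a$ by another connection $\theta$, since the difference is basic and contributes only a $d_{F_\phi}$-exact term downstairs.

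Your displayed formula, and the descent mechanism built on it, also fail. By Lemma~\ref{lem:module}(i) and $\dF\ka=0$ one has $\dF(\ip{\theta}{\lambda}\cdot\ka)=\ip{d\theta}{\lambda}\cdot\ka$. So your right-hand side $\dF(\ip{\theta}{\lambda}\cdot\ka)-\ip{d\theta}{\lambda}\cdot\ka$ is identically zero.

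The plan of removing vertical pieces until ``every form appearing is basic'' cannot work either. If the primitive $\ip{\theta}{\lambda}\cdot\Phi^*\ka$ were basic, $\partial_\lambda\vperp_\psi$ would be $d_{F_\phi}$-exact and the class would not move. In fact the primitive is necessarily non-basic: its contraction with $v_\xi$ is $\pm\ip{\xi}{\lambda}\,\Phi^*\ka$. The correct computation is
\[
\pi^*\partial_\lambda\vperp_\psi=d_{F_L}\Phi^*(a_\lambda\cdot\ka)=\Phi^*(da_\lambda\cdot\ka)=\pi^*\bigl(\ip{c}{\lambda}\cdot\ka_\psi\bigr).
\]
This is an identity of forms for the conjugate trivialization, with $c=d\,i_N^*a$ and no sign to absorb. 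Its source component is precisely the cancellation you anticipate: $F_L^*i_N^*(a_\lambda\wedge\ka_N)$ cancels against the term $-F^*a_\lambda\wedge F^*\ka_N$ produced by $d\ka_M=F^*\ka_N$.

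Finally, Lemma~\ref{lem:variation} has no separate ``term coming from the variation of the level''; the entire derivative is $d_{F_L}$ of the transgression.
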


For $M=\emptyset$ this recovers \cite[Thm.~5.6]{Blacker}, and for $k=1$,
$M=\emptyset$, the Duistermaat--Heckman theorem \cite{DH} in its cohomological
form. Finally, Section~\ref{sec:localization-body} transports the localization package:
the split datum exponentiates to an equivariantly closed element
$e^{z(s-\nu)}\cdot\ka$ of the relative Cartan model (Lemma~\ref{lem:eqclosed};
note the sign, dictated by our conventions), the exact stationary phase
approximation holds on the target (Theorem~\ref{thm:stationaryphase}), and a
genuinely relative constraint appears on the source: the fixed-point
contributions of the pulled-back split package cancel identically
(Theorem~\ref{thm:cancellation}).

\subsection{Relation to the literature}
This article is the fourth in a series \cite{DjJGP,RelHMM,RelApps} and is
patterned on \cite{Blacker}, whose numbering we track throughout to ease
comparison; the absolute theory is recovered verbatim at $M=\emptyset$
(and, one categorical step down, at $M=\{*\}$). Multisymplectic manifolds and
their Hamiltonian structures are treated in \cite{CIL,RyvkinWurzbacher,Rogers};
alternative moment map notions are compared in \cite{CFRZ,MS,RW,RelHMM}. Our
conventions follow \cite{CFRZ,RelHMM}; the dictionary to the conventions of
\cite{Blacker} is given in Remark~\ref{rem:dictionary}.

For the reader who has not read the earlier articles in detail, the following
table records the definitions and results imported from each companion paper;
all other statements are proved here or are standard.
\begin{center}
\renewcommand{\arraystretch}{1.25}
\begin{tabular}{@{}p{0.24\textwidth}p{0.68\textwidth}@{}}
\hline
\textbf{Source} & \textbf{Imported into this paper}\\
\hline
\cite{DjJGP} (foundations) &
relative Cartan calculus and module structure
(\S\ref{subsec:calculus}--\ref{subsec:module}); the Leibniz algebra of
relative Hamiltonian forms (Lemma~\ref{lem:leibniz}); the strong-nondegeneracy
condition used in the observable-reduction theorem.\\
\cite{RelHMM} (moment maps) &
relative homotopy moment maps and their component/Cartan-model equations, used
in the split-moment-map correspondence (Theorem~\ref{thm:onesteptie}) and the
comparison with the $L_\infty$ theory (Remark~\ref{rem:observables}).\\
\cite{RelApps} (applications) &
relative cycles, the relative Stokes theorem, and the relative integration
pairing (\S3 there), used in the variation and localization sections
(Corollary~\ref{cor:periods}, \S\ref{sec:localization-body}).\\
\cite{Blacker} (absolute reduction) &
the pattern of the reduction theorem and the numbering convention; recovered
verbatim at $M=\varnothing$.\\
\hline
\end{tabular}
\end{center}

\subsection*{Acknowledgements}

The author is deeply grateful to Derek Krepski for introducing the author to relative multisymplectic geometry during their doctoral studies and for many stimulating discussions, insightful suggestions, and continuous encouragement throughout the development of this work and the broader research program on relative multisymplectic geometry. His guidance has been invaluable and has profoundly influenced the direction of this research.

%% ============================================================
\section{Background: relative multisymplectic geometry}
\label{sec:background}
%% ============================================================

We recall the framework of \cite{DjJGP,RelHMM} and establish one new structural
ingredient, the module calculus of Lemma~\ref{lem:module}. All manifolds are
$C^\infty$ and all Lie groups compact, so that all actions are proper.

\subsection{The relative Cartan calculus}\label{subsec:calculus}
For a smooth map $F\colon M\to N$, the mapping-cone complex is
\begin{equation}\label{eq:cone}
  \Om^j(F):=\Om^j(N)\oplus\Om^{j-1}(M),
  \qquad
  \dF(\alpha,\beta):=(d\alpha,\;F^*\alpha-d\beta) .
\end{equation}
An $F$-pair of vector fields $v=(v_N,v_M)$ consists of $F$-related fields; the
relative contraction and Lie derivative
\begin{equation}\label{eq:ops}
  \io_v(\alpha,\beta)=(\io_{v_N}\alpha,\,-\io_{v_M}\beta),
  \qquad
  \Lder_v(\alpha,\beta)=(\Lder_{v_N}\alpha,\,\Lder_{v_M}\beta)
\end{equation}
satisfy all the Cartan identities, in particular
$\Lder_v=\dF\io_v+\io_v\dF$ and $[\Lder_u,\io_v]=\io_{[u,v]}$
\cite[Prop.~3.5]{RelHMM}. We freely use the vanishing principle
\cite[Rem.~3.2]{RelHMM}: a $\dF$-closed relative $0$-form is a locally constant
function on $N$ pulling back to zero on $M$, hence vanishes when $N$ is connected
and $M\neq\emptyset$.

\begin{definition}\label{def:relkplectic}
A $\dF$-closed $\vperp=(\omega,\eta)\in\Om^{k+1}(F)$ is a \emph{relative
pre-$k$-plectic structure}; it is \emph{$k$-plectic} if for every $x\in M$ the
map $w\mapsto(\io_{T_xF(w)}\,\omega,\ \io_w\,\eta)$ is injective on $T_xM$. A
relative $(k{-}1)$-form $\sigma$ is \emph{Hamiltonian}, with Hamiltonian $F$-pair
$v_\sigma$, if
\begin{equation}\label{eq:hamiltonian}
  \dF\sigma=-\io_{v_\sigma}\vperp .
\end{equation}
We write $\Ham^{k-1}(F,\vperp)$ for the space of Hamiltonian relative
$(k{-}1)$-forms. Componentwise, \eqref{eq:hamiltonian} reads
$d\sigma_N=-\io_{v_N}\omega$ and $d\sigma_M=F^*\sigma_N+\io_{v_M}\eta$.
\end{definition}

If $\vperp$ is nondegenerate then $v_\sigma$ is unique, and
$\Lder_{v_\sigma}\vperp=0$ by the magic formula. Nondegeneracy will play no role
in the reduction theorem itself (cf.\ \cite[Rem.~4.2]{Blacker}), but is used in
the structure theory of split moment maps.

\begin{remark}[Dictionary with the absolute theory]\label{rem:dictionary}
Our conventions are those of \cite{CFRZ,RelHMM}: fundamental vector fields
$v_\xi|_x=\frac{d}{dt}\big|_0\exp(-t\xi)\cdot x$, so that $\xi\mapsto v_\xi$ is a
Lie algebra homomorphism -- the same convention as \cite{Blacker} -- and
Hamiltonian forms obey \eqref{eq:hamiltonian}. In \cite{Blacker} a Hamiltonian
form satisfies $d\alpha=\io_{X_\alpha}\omega$; thus for $M=\emptyset$ a
Hamiltonian pair in our sense corresponds to a Hamiltonian form of
\cite{Blacker} with $X_\alpha=-v_\alpha$, and our (co)moment maps correspond to
those of \cite{Blacker} under $\mu\mapsto-\mu$. All statements below reduce at
$M=\emptyset$ to their counterparts in \cite{Blacker} through this dictionary,
and we will not repeat the translation each time.
\end{remark}

\begin{remark}[Caveat lector: slot order]\label{rem:slotorder}
As in the prequantization paper of this series, relative forms are
written \emph{target component first},
$(\alpha,\beta)\in\Om^j(N)\oplus\Om^{j-1}(M)$, with the
differential~\eqref{eq:cone}; this is the mirror of the
source-first convention of \cite{DjJGP,RelHMM}, the two being
exchanged by the flip composed with the sign involution
$\beta\mapsto(-1)^{|\beta|}\beta$.  We adopt the target-first order
because levels, quotients, and localization data are anchored on the
target, the source carrying the trivializing tier.
\end{remark}

\begin{remark}[On signs]\label{rem:signs}
The constructions of this paper involve several interacting grading and
contraction conventions, and the five sign-sensitive families of formulas ---
the module action \eqref{eq:module}, the conjugate-distribution identities of
\S\ref{subsec:conjugate}, the variation formula \eqref{eq:DH}, the equivariant
exponential of Lemma~\ref{lem:eqclosed}, and the source-cancellation formula of
Theorem~\ref{thm:cancellation} --- have been verified against the two boundary
reductions that fix all signs unambiguously: $M=\varnothing$, on which every
formula must return its counterpart in \cite{Blacker,BGV} under the dictionary
of Remark~\ref{rem:dictionary}, and $N=\mathrm{pt}$, on which it must return the
source theory of \cite{DjJGP}. Each formula reduces correctly on both edges; we
flag the relevant edge check at the point of use.
\end{remark}

\subsection{The module calculus of the mapping cone}\label{subsec:module}

The following structure, though elementary, is one of the organizing mechanisms
of the paper: the differential graded module action of the target de~Rham
algebra on the mapping cone controls the split moment maps
(\S\ref{subsec:split}), the variation formula (\S\ref{sec:variation}), the
localization package (\S\ref{sec:localization-body}), and, throughout, the way
target data act on source data. We state it once, in full, and refer back to it
at each use. It is used constantly in
Sections~\ref{sec:hamiltonian}--\ref{sec:localization-body}.

\begin{lemma}[Module calculus]\label{lem:module}
The assignment
\begin{equation}\label{eq:module}
  \gamma\cdot(\alpha,\beta)\;:=\;
  \bigl(\gamma\wedge\alpha,\ (-1)^{|\gamma|}\,F^*\gamma\wedge\beta\bigr),
  \qquad \gamma\in\Om^{|\gamma|}(N),
\end{equation}
makes $\Om^\bullet(F)$ a differential graded module over
$(\Om^\bullet(N),\wedge,d)$:
\begin{enumerate}[label=\textup{(\roman*)},leftmargin=2.2em]
\item $\dF(\gamma\cdot c)=d\gamma\cdot c+(-1)^{|\gamma|}\,\gamma\cdot\dF c$;
\item $\io_v(\gamma\cdot c)=(\io_{v_N}\gamma)\cdot c
      +(-1)^{|\gamma|}\,\gamma\cdot\io_v c$ for every $F$-pair $v$;
\item $\Lder_v(\gamma\cdot c)=(\Lder_{v_N}\gamma)\cdot c
      +\gamma\cdot\Lder_v c$;
\item $(\gamma\wedge\gamma')\cdot c=\gamma\cdot(\gamma'\cdot c)$ and
      $1\cdot c=c$.
\end{enumerate}
\end{lemma}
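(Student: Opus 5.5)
The plan is to verify each identity componentwise from \eqref{eq:cone}, \eqref{eq:ops} and \eqref{eq:module}. Throughout, $c=(\alpha,\beta)\in\Om^{j}(F)$ and $g=|\gamma|$. First I will check that the action is well defined in degrees: $\gamma\wedge\alpha\in\Om^{g+j}(N)$ and $F^*\gamma\wedge\beta\in\Om^{g+j-1}(M)$, so $\gamma\cdot c\in\Om^{g+j}(F)$. Item (iv) is then immediate. The sign $(-1)^{g}(-1)^{g'}=(-1)^{g+g'}$ is multiplicative, and $F^*$ is an algebra map, so
\[
\gamma\cdot(\gamma'\cdot c)=\bigl(\gamma\wedge\gamma'\wedge\alpha,\ (-1)^{g+g'}F^*(\gamma\wedge\gamma')\wedge\beta\bigr)=(\gamma\wedge\gamma')\cdot c .
\]
Also $1\cdot c=c$ trivially.

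For (i), I will expand $\dF(\gamma\cdot c)$. The first component is $d(\gamma\wedge\alpha)=d\gamma\wedge\alpha+(-1)^g\gamma\wedge d\alpha$. The second component is
\[
F^*\gamma\wedge F^*\alpha-(-1)^g\bigl(F^*d\gamma\wedge\beta+(-1)^gF^*\gamma\wedge d\beta\bigr)=F^*\gamma\wedge(F^*\alpha-d\beta)-(-1)^gF^*d\gamma\wedge\beta .
\]
On the right-hand side, $d\gamma\cdot c=(d\gamma\wedge\alpha,\,(-1)^{g+1}F^*d\gamma\wedge\beta)$. Likewise
\[
(-1)^g\gamma\cdot\dF c=\bigl((-1)^g\gamma\wedge d\alpha,\ F^*\gamma\wedge(F^*\alpha-d\beta)\bigr),
\]
since the two signs $(-1)^g$ cancel in the second slot. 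Adding these matches the expansion term by term.

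For (ii), the one genuinely geometric input is that $v_M$ and $v_N$ are $F$-related. This gives $\io_{v_M}F^*\gamma=F^*(\io_{v_N}\gamma)$. Using it:
\begin{itemize}
\item the left side has first slot $\io_{v_N}\gamma\wedge\alpha+(-1)^g\gamma\wedge\io_{v_N}\alpha$;
\item its second slot is $-(-1)^gF^*(\io_{v_N}\gamma)\wedge\beta-F^*\gamma\wedge\io_{v_M}\beta$.
\end{itemize}
This matches $(\io_{v_N}\gamma)\cdot c$, whose second slot carries the sign $(-1)^{g-1}$, plus $(-1)^g\gamma\cdot(\io_{v_N}\alpha,-\io_{v_M}\beta)$.

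For (iii), I will either argue directly, using $\Lder_{v_M}F^*\gamma=F^*\Lder_{v_N}\gamma$ and the derivation property of Lie derivatives, or derive it formally from (i) and (ii) via $\Lder_v=\dF\io_v+\io_v\dF$ and Cartan's formula on $N$. In the formal route the cross terms $(-1)^{g}(\io_{v_N}\gamma)\cdot\dF c$ and $(-1)^{g-1}(\io_{v_N}\gamma)\cdot\dF c$ cancel, as do the two $(\io d\gamma)$-type and $(d\io\gamma)$-type mixings with $\io_vc$.

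The main obstacle is purely sign bookkeeping. The crucial design choice is the factor $(-1)^{|\gamma|}$ on the source slot in \eqref{eq:module}, which must combine correctly with the minus sign in $\io_v$ on the second component. I will check it first in (i), since that is where the sign is forced.
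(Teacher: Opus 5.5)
Your proposal is correct and follows the paper's proof essentially line for line. Both use componentwise expansion for (i) and (ii), with $F$-relatedness, i.e.\ $\io_{v_M}F^*=F^*\io_{v_N}$, as the only non-formal input; (iv) from multiplicativity of $F^*$ and of the sign; and (iii) either from (i)--(ii) plus the magic formula or directly via $\Lder_{v_M}F^*=F^*\Lder_{v_N}$. Your sign checks, including the $M$-component $F^*\gamma\wedge(F^*\alpha-d\beta)-(-1)^{|\gamma|}F^*d\gamma\wedge\beta$ in (i), agree with the paper's.
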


\begin{proof}
(iv) is immediate from \eqref{eq:module} and
$F^*(\gamma\wedge\gamma')=F^*\gamma\wedge F^*\gamma'$. For (i), write
$c=(\alpha,\beta)$ and expand both sides using \eqref{eq:cone}; the
$N$-components agree by the Leibniz rule on $N$, while the $M$-component of the
left side is
$F^*\gamma\wedge F^*\alpha-(-1)^{|\gamma|}\bigl(F^*d\gamma\wedge\beta
+(-1)^{|\gamma|}F^*\gamma\wedge d\beta\bigr)
=F^*\gamma\wedge(F^*\alpha-d\beta)-(-1)^{|\gamma|}F^*d\gamma\wedge\beta$,
which is the $M$-component of the right side. For (ii), the $N$-components agree
by the Leibniz rule for $\io_{v_N}$; the $M$-component of the left side is
$-\io_{v_M}\bigl((-1)^{|\gamma|}F^*\gamma\wedge\beta\bigr)
=-(-1)^{|\gamma|}F^*(\io_{v_N}\gamma)\wedge\beta
-F^*\gamma\wedge\io_{v_M}\beta$,
using $\io_{v_M}F^*=F^*\io_{v_N}$ for $F$-related fields; this matches the
$M$-component of the right side, namely
$(-1)^{|\gamma|-1}F^*(\io_{v_N}\gamma)\wedge\beta
+(-1)^{|\gamma|}F^*\gamma\wedge(-\io_{v_M}\beta)$.
Finally (iii) follows from (i)--(ii) and the magic formula, or directly
componentwise using $\Lder_{v_M}F^*=F^*\Lder_{v_N}$.
\end{proof}

\begin{remark}\label{rem:functions}
For $f\in C^\infty(N)=\Om^0(N)$ the action is
$f\cdot(\alpha,\beta)=(f\alpha,\,(F^*f)\beta)$, with
$\dF(f\cdot c)=df\cdot c+f\cdot\dF c$. All $\gdual$-valued constructions below
use \eqref{eq:module} extended coefficientwise.
\end{remark}

%% ============================================================
\subsection{Notation}\label{subsec:notation}
For the reader's convenience, the recurring notation of the paper:
\begin{center}
\renewcommand{\arraystretch}{1.2}
\begin{tabular}{@{}llll@{}}
\hline
$F\colon M\to N$ & the underlying map &
$\vperp=(\omega,\eta)$ & relative form, target first\\
$F_L\colon L_M\to L_N$ & $F$ restricted to the level pair &
$\mu=(\mu_N,\mu_M)$ & relative moment map\\
$F_\phi\colon M_\phi\to N_\phi$ & the reduced map &
$\phi=(\phi_N,\phi_M)$ & the level\\
$i=(i_N,i_M)$ & inclusions of the level pair &
$v_\xi=(v_\xi^N,v_\xi^M)$ & fundamental $F$-pair\\
$\pi=(\pi_N,\pi_M)$ & quotient projections &
$\mu=\nu\cdot\ka$ & split moment map\\
$\vperp_\phi=(\omega_\phi,\eta_\phi)$ & reduced relative form &
$c$ & Chern form of the level bundle\\
\hline
\end{tabular}
\end{center}
Componentwise, every construction pairs a target object (no decoration
conflicts) with its source companion, and the map between them is always the
relevant restriction of $F$.

%% ============================================================
\section{Relative Hamiltonian $G$-spaces}\label{sec:hamiltonian}
%% ============================================================

This section assembles the algebraic cast of the paper in three steps, each
building on the last: first the \emph{Hamiltonian pairs} and their Leibniz
bracket (the observables); then the \emph{relative moment maps} (equivariant
families of Hamiltonian data); then the \emph{split} moment maps (those
factoring through the module calculus), which are the ones that later support
variation and localization. The reader may keep this three-step picture in
mind throughout.

\subsection{The Leibniz algebra of relative Hamiltonian forms}

Following \cite[\S2]{Blacker} we equip $\Ham^{k-1}(F,\vperp)$ with the bracket
\begin{equation}\label{eq:bracket}
  \{\sigma,\tau\}\;:=\;\Lder_{v_\sigma}\tau ,
\end{equation}
where $v_\sigma$ is a (the, in the nondegenerate case) Hamiltonian $F$-pair of
$\sigma$.

\begin{lemma}\label{lem:leibniz}
Let $(F,\vperp)$ be relative pre-$k$-plectic and suppose Hamiltonian $F$-pairs
have been chosen linearly \textup{(}no choice is needed when $\vperp$ is
nondegenerate\textup{)}. Then:
\begin{enumerate}[label=\textup{(\roman*)},leftmargin=2.2em]
\item $\{\sigma,\tau\}$ is again Hamiltonian, with Hamiltonian $F$-pair
$[v_\sigma,v_\tau]$;
\item $\{\,,\}$ satisfies the \textup{(}left\textup{)} Leibniz identity
$\{\sigma,\{\tau,\rho\}\}=\{\{\sigma,\tau\},\rho\}+\{\tau,\{\sigma,\rho\}\}$;
\item the failure of antisymmetry is exact:
$\{\sigma,\tau\}+\{\tau,\sigma\}
=\dF\bigl(\io_{v_\sigma}\tau+\io_{v_\tau}\sigma\bigr)$.
\end{enumerate}
Thus $(\Ham^{k-1}(F,\vperp),\{\,,\})$ is a Leibniz algebra and
$\sigma\mapsto v_\sigma$ intertwines $\{\,,\}$ with the bracket of $F$-pairs.
\end{lemma}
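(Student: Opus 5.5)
We prove the three items in order, using only the relative Cartan identities of \S\ref{subsec:calculus}: $\Lder_v=\dF\io_v+\io_v\dF$ and $[\Lder_u,\io_v]=\io_{[u,v]}$.

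\emph{Item (i).}
Before anything else we check that $v_\sigma$ preserves $\vperp$. Since $\dF\vperp=0$,
\[
\Lder_{v_\sigma}\vperp=\dF\io_{v_\sigma}\vperp=-\dF\dF\sigma=0 .
\]
This holds in the pre-$k$-plectic case as well, for any chosen Hamiltonian pair. Since $\Lder$ commutes with $\dF$, we then compute
\[
\dF\{\sigma,\tau\}=\Lder_{v_\sigma}\dF\tau=-\Lder_{v_\sigma}\io_{v_\tau}\vperp
=-\io_{[v_\sigma,v_\tau]}\vperp-\io_{v_\tau}\Lder_{v_\sigma}\vperp
=-\io_{[v_\sigma,v_\tau]}\vperp .
\]
So $\{\sigma,\tau\}$ is Hamiltonian with pair $[v_\sigma,v_\tau]$. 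This pair is again an $F$-pair, because brackets of $F$-related fields are $F$-related.

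Here lies the one subtlety. In the degenerate case the \emph{chosen} pair $v_{\{\sigma,\tau\}}$ need not equal $[v_\sigma,v_\tau]$. For (ii) we therefore need either to arrange the linear choice compatibly, or to show that the bracket does not depend on the choice. I expect this to be the main point to handle. My plan is to read the hypothesis as saying that the chosen pairs are compatible with brackets. This is automatic under nondegeneracy, and it is implicit in the final sentence of the statement, where $\sigma\mapsto v_\sigma$ is said to intertwine the brackets.

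\emph{Item (ii).}
The left Leibniz identity $\Lder_{v_\sigma}\Lder_{v_\tau}\rho=\Lder_{[v_\sigma,v_\tau]}\rho+\Lder_{v_\tau}\Lder_{v_\sigma}\rho$ is just the commutator identity
\[
[\Lder_u,\Lder_v]=\Lder_{[u,v]} .
\]
This follows from the two Cartan identities: expand $\Lder_v=\dF\io_v+\io_v\dF$ and use that $\Lder_u$ commutes with $\dF$. Substituting $v_{\{\sigma,\tau\}}=[v_\sigma,v_\tau]$ from (i) then gives exactly the stated identity.

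\emph{Item (iii).}
Write $\{\sigma,\tau\}=\dF\io_{v_\sigma}\tau+\io_{v_\sigma}\dF\tau=\dF\io_{v_\sigma}\tau-\io_{v_\sigma}\io_{v_\tau}\vperp$, and similarly for $\{\tau,\sigma\}$. Adding the two, the terms $\io_{v_\sigma}\io_{v_\tau}\vperp+\io_{v_\tau}\io_{v_\sigma}\vperp$ cancel by graded anticommutativity of relative contractions, $\io_u\io_v=-\io_v\io_u$. This holds componentwise from \eqref{eq:ops}: on $N$ it is the usual identity, and on $M$ the two sign changes cancel. What remains is $\dF(\io_{v_\sigma}\tau+\io_{v_\tau}\sigma)$.

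The concluding sentence of the statement is then the content of (i) and (ii).
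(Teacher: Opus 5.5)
Your proof is correct and is essentially the paper's own argument: (i) from $\Lder_{v_\sigma}\vperp=0$, the commutation of $\dF$ with $\Lder_{v_\sigma}$, and $[\Lder_u,\io_v]=\io_{[u,v]}$; (ii) from $[\Lder_u,\Lder_v]=\Lder_{[u,v]}$; and (iii) from the magic formula together with anticommutativity of relative contractions. Your derivation $\Lder_{v_\sigma}\vperp=-\dF\dF\sigma=0$, which needs no nondegeneracy, is slightly cleaner than the paper's phrasing.

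The subtlety you flag in (ii) is genuine, and the paper passes over it exactly as you do. Its ``using (i)'' tacitly takes $v_{\{\sigma,\tau\}}=[v_\sigma,v_\tau]$. For an arbitrary linear choice in the degenerate case, the two sides of the Leibniz identity differ by the exact term $\dF\io_u\rho$, where $u=v_{\{\sigma,\tau\}}-[v_\sigma,v_\tau]$ satisfies $\io_u\vperp=0$. This is why the paper later works with pairs $(\sigma,v)$ and the bracket $(\Lder_v\tau,[v,w])$.
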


\begin{proof}
(i) Using $[\dF,\Lder_{v_\sigma}]=0$, the Hamilton equation for $\tau$,
$[\Lder_u,\io_v]=\io_{[u,v]}$, and $\Lder_{v_\sigma}\vperp=0$:
\[
  \dF\{\sigma,\tau\}
  =\Lder_{v_\sigma}\dF\tau
  =-\Lder_{v_\sigma}\io_{v_\tau}\vperp
  =-\io_{[v_\sigma,v_\tau]}\vperp-\io_{v_\tau}\Lder_{v_\sigma}\vperp
  =-\io_{[v_\sigma,v_\tau]}\vperp .
\]
(ii) is the operator identity
$[\Lder_{v_\sigma},\Lder_{v_\tau}]=\Lder_{[v_\sigma,v_\tau]}
=\Lder_{v_{\{\sigma,\tau\}}}$ applied to $\rho$, using (i). (iii) By the magic
formula and the Hamilton equations,
$\Lder_{v_\sigma}\tau+\Lder_{v_\tau}\sigma
=\dF(\io_{v_\sigma}\tau+\io_{v_\tau}\sigma)
+\io_{v_\sigma}\dF\tau+\io_{v_\tau}\dF\sigma$,
and the last two terms are
$-\io_{v_\sigma}\io_{v_\tau}\vperp-\io_{v_\tau}\io_{v_\sigma}\vperp=0$
by anticommutativity of relative contractions.
\end{proof}

Having the observables, we now let a group act: a relative moment map is an
equivariant family of Hamiltonian forms, one for each generator of the action,
and the whole reduction theory rests on the interplay between its two
components.

\subsection{Relative moment maps}

\begin{definition}\label{def:relmoment}
Let $G$ act on $F$ (equivariantly on $M\to N$) preserving the relative
pre-$k$-plectic form $\vperp$. A \emph{relative comoment map} is a homomorphism
of Leibniz algebras
$\widetilde\mu\colon\g\to\Ham^{k-1}(F,\vperp)$
lifting the fundamental assignment, i.e.\ with $v_{\widetilde\mu(\xi)}=v_\xi$ for
all $\xi\in\g$. Packaging $\widetilde\mu$ as
$\mu\in\Om^{k-1}(F,\gdual):=\bigl(\Om^{k-1}(N)\oplus\Om^{k-2}(M)\bigr)\otimes
\gdual$
via $\mu_\xi:=\widetilde\mu(\xi)$, we call $\mu$ a \emph{relative moment map}
when it is additionally $G$-equivariant (for the coadjoint action on $\gdual$
tensored with the natural actions on relative forms), and we call
$(F,\vperp,G,\mu)$ a \emph{relative \textup{(}multisymplectic\textup{)}
Hamiltonian $G$-space}. Explicitly, the defining relations are
\begin{equation}\label{eq:momentrelations}
  \dF\mu_\xi=-\io_{v_\xi}\vperp,
  \qquad
  \{\mu_\xi,\mu_\zeta\}=\mu_{[\xi,\zeta]},
  \qquad \xi,\zeta\in\g .
\end{equation}
\end{definition}

\begin{remark}\label{rem:equivariancebracket}
For a $G$-equivariant $\mu$ the Leibniz condition in
\eqref{eq:momentrelations} is automatic: infinitesimal equivariance reads
$\Lder_{v_\xi}\mu_\zeta=\mu_{[\xi,\zeta]}$, which \emph{is}
$\{\mu_\xi,\mu_\zeta\}=\mu_{[\xi,\zeta]}$. Conversely, exactly as in
\cite[Prop.~3.4]{Blacker}, a relative Hamiltonian $\g$-space integrates to a
Hamiltonian $G$-space when $G$ is connected: the identity
$\Lder_{v_\xi}\mu_\zeta=\mu_{[\xi,\zeta]}$ integrates along one-parameter
subgroups. For $k=1$ these conditions are automatic and the comoment is unique
by the rigidity theorem of \cite[Thm.~7.1]{RelApps}.
\end{remark}

\begin{remark}[Comparison with relative homotopy moment maps]
\label{rem:comparehmm}
A relative comoment map is related to, but distinct from, the first component of
a relative homotopy moment map \cite[Def.~5.2]{RelHMM}: both satisfy the Hamilton
equation, but the Leibniz condition composes brackets through
$\Lder_{v_\xi}$, whereas the $L_\infty$-condition composes through
$\vs(2)\,\io(v_\xi\wedge v_\zeta)\vperp$ up to the higher component $f_2$. The
two conditions differ by the $\dF$-exact term
$\dF\io_{v_\xi}\mu_\zeta$, by the magic formula. In particular every
\emph{equivariant one-step extension} in the sense of
\cite[Lem.~6.4]{RelHMM} is simultaneously a relative comoment map
\textup{(}its condition \textup{(M2)} is the Leibniz identity\textup{)} and the
seed of a full relative homotopy moment map; Theorem~\ref{thm:onesteptie} below
exhibits a large class of such data.
\end{remark}

\begin{example}\label{ex:firstexamples}
\begin{enumerate}[label=\textup{(\roman*)},leftmargin=2.2em]
\item \textup{(Absolute case.)} For $M=\emptyset$,
Definition~\ref{def:relmoment} reduces to the Hamiltonian $G$-spaces of
\cite[Def.~3.1]{Blacker}, via Remark~\ref{rem:dictionary}.
\item \textup{(Exact relative structures.)} If $\vperp=-\dF\theta$ for a
$G$-invariant $\theta\in\Om^{k}(F)$, then $\mu_\xi:=-\io_{v_\xi}\theta$ is a
relative moment map: the Hamilton equation is the computation of
\cite[Ex.~9.2]{RelHMM}, and equivariance follows from
$[\Lder_{v_\xi},\io_{v_\zeta}]=\io_{v_{[\xi,\zeta]}}$ and the invariance of
$\theta$. This is the relative counterpart of the canonical moment maps on
$\Lambda^k_\ell T^*E$ of \cite[Ex.~3.2(iii)]{Blacker}.
\item \textup{(Quasi-Hamiltonian $G$-spaces.)} For a quasi-Hamiltonian
$G$-space $(M,\omega,\mu_{\mathrm{AMM}})$ \cite{AMM}, the relative $2$-plectic
map $(\mu_{\mathrm{AMM}}\colon M\to G,\ \vperp=(\eta_G,\omega))$ of
\cite[\S8]{RelHMM} carries the canonical one-step datum
$\mathrm{M}(\xi)=(\tfrac12\ip{\theta^L+\theta^R}{\xi},0)$, which by
Remark~\ref{rem:comparehmm} is a relative comoment map in the present sense.
The reduction theory of Section~\ref{sec:reduction} therefore applies to
group-valued moment maps, at arbitrary \emph{form-valued} levels
$\phi\in\Om^1(\mu_{\mathrm{AMM}},\gdual)$; the classical point-level reduction
$\mu_{\mathrm{AMM}}^{-1}(e)/G$ of \cite{AMM} is not of this type, and the two
schemes complement one another.
\end{enumerate}
\end{example}
Among relative moment maps, those that factor through the module calculus as a
scalar comoment times a fixed closed form are both the most common in examples
and the ones for which the finest results (variation, localization) hold; we
isolate them now.

\subsection{Split relative moment maps}\label{subsec:split}

\begin{definition}\label{def:split}
A relative moment map $\mu\in\Om^{k-1}(F,\gdual)$ is \emph{split} if
\[
  \mu=\nu\cdot\ka
\]
for some $\nu\in C^\infty(N,\gdual)$ and some $\dF$-closed
$\ka\in\Om^{k-1}(F)$, where $\cdot$ is the module action of
Lemma~\ref{lem:module} \textup{(}extended $\gdual$-coefficientwise, cf.\
Remark~\ref{rem:functions}\textup{)}. The splitting is \emph{invariant} if
$\ka$ is $G$-invariant, and \emph{basic} if additionally
$\io_{v_\xi}\ka=0$ for all $\xi\in\g$ \textup{(}relative contraction\textup{)}.
We call the resulting datum a \emph{split relative Hamiltonian $G$-space}.
\end{definition}

Componentwise, $\mu_\xi=\nu_\xi\cdot\ka=(\nu_\xi\,\ka_N,\ (F^*\nu_\xi)\,\ka_M)$
and the Hamilton equation unpacks, via Lemma~\ref{lem:module}(i) and
$\dF\ka=0$, to
\begin{equation}\label{eq:splitmoment}
  d\nu_\xi\cdot\ka\;=\;-\,\io_{v_\xi}\vperp,
  \qquad\text{i.e.}\qquad
  d\nu_\xi\wedge\ka_N=-\io_{v_\xi^N}\omega,
  \quad
  F^*d\nu_\xi\wedge\ka_M=-\io_{v_\xi^M}\eta .
\end{equation}

\begin{proposition}\label{prop:kernel}
Let $\mu=\nu\cdot\ka$ be an invariant splitting. Then the pair of kernel
distributions
$\mathcal F=(\ker\ka_N,\ \ker\ka_M\cap\ker F^*\ka_N)$ is preserved by $G$; if
the splitting is basic and $G$ is connected, then the fundamental vector fields
satisfy $v_\xi^N\in\ker\ka_N$ and $v_\xi^M\in\ker\ka_M\cap\ker F^*\ka_N$
pointwise.
\end{proposition}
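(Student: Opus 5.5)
The plan is to argue directly from the definitions. Throughout, the kernel of a form $\beta$ at a point $p$ is $\ker\beta_p=\{w\in T_p:\io_w\beta_p=0\}$. Both claims should then reduce to naturality of contraction under pullback, applied to the two components of $\ka$ separately. The only input from the relative setting is that $F$ is $G$-equivariant and that the components of a fundamental $F$-pair are $F$-related. No use of the Hamilton equation \eqref{eq:splitmoment} should be needed.

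\emph{Invariance of $\mathcal F$.} Let $\ka=(\ka_N,\ka_M)$ be $G$-invariant. Since the action on relative forms is componentwise, this means $g^*\ka_N=\ka_N$ and $g^*\ka_M=\ka_M$ for all $g\in G$. Because $F\circ g=g\circ F$, we also get $g^*F^*\ka_N=F^*g^*\ka_N=F^*\ka_N$. So it suffices to prove the general fact that a diffeomorphism $g$ preserving a form $\beta$ maps $\ker\beta$ to itself. This follows from the naturality of contraction,
\[
\bigl(g^*\io_{dg(w)}\beta\bigr)_p=\io_w(g^*\beta)_p=\io_w\beta_p .
\]
Hence $\io_w\beta_p=0$ if and only if $\io_{dg(w)}\beta_{gp}=0$. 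Applying this to $\ka_N$ on $N$, and to both $\ka_M$ and $F^*\ka_N$ on $M$, gives $G$-invariance of $\ker\ka_N$ and of $\ker\ka_M\cap\ker F^*\ka_N$.

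\emph{Basic case.} By \eqref{eq:ops}, the condition $\io_{v_\xi}\ka=0$ says $\io_{v_\xi^N}\ka_N=0$ and $-\io_{v_\xi^M}\ka_M=0$. This gives $v_\xi^N\in\ker\ka_N$ and $v_\xi^M\in\ker\ka_M$ pointwise. For the remaining condition I use that $v_\xi^M$ and $v_\xi^N$ are $F$-related. This gives the identity $\io_{v_\xi^M}F^*=F^*\io_{v_\xi^N}$ already used in the proof of Lemma~\ref{lem:module}(ii), so
\[
\io_{v_\xi^M}F^*\ka_N=F^*\io_{v_\xi^N}\ka_N=0 ,
\]
and therefore $v_\xi^M\in\ker F^*\ka_N$.

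The only point requiring care is the role of connectedness of $G$. On the route above it is not actually used: basicness is stated for all $\xi\in\g$, and the conclusion is infinitesimal. I would add a remark that the hypothesis matters only if one starts from the weaker, infinitesimal version of invariance, $\Lder_{v_\xi}\ka=0$. In that case connectedness is what integrates it along one-parameter subgroups to full $G$-invariance, exactly as in Remark~\ref{rem:equivariancebracket}. I would also note the degenerate case $k=1$, where $\ka$ has form degree $0$ in its $N$-component. There the kernel conditions are vacuous by convention (contraction with a function is zero), and the statement holds trivially.
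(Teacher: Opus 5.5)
Your proof is correct and follows the paper's argument: $G$-invariance of the kernels comes from invariance of $\ka$, with equivariance of $F$ handling $F^*\ka_N$. The basic case comes from the componentwise reading of $\io_{v_\xi}\ka=0$ together with $\io_{v_\xi^M}F^*=F^*\io_{v_\xi^N}$ for $F$-related fields. Your observation that connectedness of $G$ is never used is also accurate; the paper's proof does not invoke it either.
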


\begin{proof}
Invariance of $\ka$ gives invariance of the kernels componentwise. For a basic
splitting, the relative horizontality $\io_{v_\xi}\ka=0$ reads
$\io_{v_\xi^N}\ka_N=0$ and $\io_{v_\xi^M}\ka_M=0$; moreover
$\io_{v_\xi^M}F^*\ka_N=F^*\io_{v_\xi^N}\ka_N=0$ by $F$-relatedness.
\end{proof}

The next result upgrades \cite[Prop.~3.10]{Blacker} -- equivariant closedness of
the split package -- to a statement about the relative Cartan model
$C_G(F)$ of \cite[\S6.2]{RelHMM}, whose degree-$m$ component is
$C^m_G(N)\oplus C^{m-1}_G(M)$ with differential
$\dGF(\alpha,\beta)=(\dG\alpha,F^*\alpha-\dG\beta)$ and
$(\dG\alpha)(\xi)=d(\alpha(\xi))-\io_{v_\xi}(\alpha(\xi))$.

\begin{theorem}[Split moment maps are one-step cocycles]\label{thm:onesteptie}
Let $(F,\vperp,G,\mu)$ be a relative Hamiltonian $G$-space whose moment map
admits a basic splitting $\mu=\nu\cdot\ka$. Then
\[
  \vperp_G\;:=\;\vperp-\mu
\]
is a one-step cocycle in the relative Cartan model extending $\vperp$;
consequently, by \textup{\cite[Thm.~6.5]{RelHMM}}, the maps
\[
  f_j(\xi_1,\dots,\xi_j)
  \;=\;\vs(j)\,\io\bigl(v_{\xi_1}\wedge\dots\wedge v_{\xi_{j-1}}\bigr)\,
  \mu_{\xi_j},
  \qquad 1\le j\le k,\quad \vs(j)=-(-1)^{j(j+1)/2},
\]
constitute a canonical relative homotopy moment map
$\g\rightsquigarrow\Lie(F,\vperp)$.
\end{theorem}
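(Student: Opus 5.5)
We work directly in the relative Cartan model $C_G(F)$ and view $\vperp_G=\vperp-\mu$ as the polynomial map $\xi\mapsto\vperp-\mu_\xi$ of Cartan degree $k+1$. Here $\vperp$ is constant of form degree $k+1$, and $\mu$ is linear in $\xi$ of form degree $k-1$. The cone differential is
\[
\dGF(\alpha,\beta)=(\dG\alpha,\;F^*\alpha-\dG\beta),\qquad \dG=d-\io_{v_\xi}.
\]
Since the $N$- and $M$-components are coupled through $F^*$, it is cleanest to use the packaged relative operators: $\dGF$ evaluated at $\xi$ is $\dF-\io_{v_\xi}$, with $\io_{v_\xi}$ the relative contraction of \eqref{eq:ops}. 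We check this against the componentwise formula, including the sign $-\io_{v_M}$ in the $M$-slot, which matches $-\dG\beta$. The whole proof then reduces to an identity of polynomial maps $\g\to\Om(F)$.

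First we check that $\vperp_G$ is $G$-invariant, i.e.\ lies in $C_G(F)=(S\gdual\otimes\Om(F))^G$. Invariance of $\vperp$ is given. Equivariance of $\mu$ is part of Definition~\ref{def:relmoment}. Next we expand
\[
(\dF-\io_{v_\xi})(\vperp-\mu_\xi)=\dF\vperp-\io_{v_\xi}\vperp-\dF\mu_\xi+\io_{v_\xi}\mu_\xi
\]
and sort by polynomial degree in $\xi$:
\begin{itemize}
\item Degree $0$: $\dF\vperp=0$ by relative closedness.
\item Degree $1$: $-\io_{v_\xi}\vperp-\dF\mu_\xi=0$, which is exactly the Hamilton equation in \eqref{eq:momentrelations}.
\item Degree $2$: the term $\io_{v_\xi}\mu_\xi$.
\end{itemize}

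The degree-$2$ term is where basicness enters. By Lemma~\ref{lem:module}(ii) with $\gamma=\nu_\xi$ a function,
\[
\io_{v_\xi}(\nu_\xi\cdot\ka)=(\io_{v^N_\xi}\nu_\xi)\cdot\ka+\nu_\xi\cdot\io_{v_\xi}\ka=0+\nu_\xi\cdot\io_{v_\xi}\ka,
\]
and horizontality gives $\io_{v_\xi}\ka=0$. Hence $\io_{v_\xi}\mu_\zeta=0$ for all $\xi,\zeta$, not just on the diagonal, so $\dGF\vperp_G=0$. Together with the fact that $\vperp_G$ has constant term $\vperp$ and a term linear in $\xi$, this is what ``one-step cocycle extending $\vperp$'' means in \cite[\S6]{RelHMM}.

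The step I expect to be the main obstacle is matching this to the precise hypotheses of \cite[Thm.~6.5]{RelHMM} (the equivariant one-step extension of \cite[Lem.~6.4]{RelHMM}, conditions (M1), (M2), \dots) under the target-first slot convention of Remark~\ref{rem:slotorder}. The flip with the sign involution $\beta\mapsto(-1)^{|\beta|}\beta$ could change the sign in front of $\mu$ or in the contraction. I would first transport $\vperp_G$ through that involution explicitly. Then I would check the conditions in the source-first convention:
\begin{itemize}
\item (M1) is the Hamilton equation;
\item (M2), infinitesimal equivariance, follows from Remark~\ref{rem:equivariancebracket};
\item any extra vanishing condition of the form $\io(v_{\xi_1}\wedge v_{\xi_2})\mu=0$ or $\io_{v_\xi}\mu_\zeta=0$ is supplied by the strengthened off-diagonal vanishing above.
\end{itemize}

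Applying \cite[Thm.~6.5]{RelHMM} then yields the components $f_j=\vs(j)\,\io(v_{\xi_1}\wedge\dots\wedge v_{\xi_{j-1}})\mu_{\xi_j}$ as stated. The edge checks $M=\varnothing$ (recovering \cite[Prop.~3.10]{Blacker} after $\mu\mapsto-\mu$) and $N=\mathrm{pt}$ fix the signs. Canonicity is automatic because the $f_j$ are built from $\mu$ alone, without choices.
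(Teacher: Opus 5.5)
Your proposal is correct and follows the same route as the paper, which checks conditions (M1)--(M3) of \cite[Lem.~6.4]{RelHMM}: (M1) is the Hamilton equation, (M2) is equivariance, and (M3), $\io_{v_\xi}\mu_\xi=\nu_\xi\cdot\io_{v_\xi}\ka=0$, follows from Lemma~\ref{lem:module}(ii) and basicness, which is exactly your degree-$2$ term. Your degree-by-degree expansion of $(\dF-\io_{v_\xi})(\vperp-\mu_\xi)$ spells out what that lemma packages, and the paper does not address the slot-order translation you flag either.
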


\begin{proof}
By \cite[Lem.~6.4]{RelHMM} it suffices to verify, for all $\xi,\zeta\in\g$:
(M1) $\dF\mu_\xi=-\io_{v_\xi}\vperp$; (M2)
$\Lder_{v_\xi}\mu_\zeta=\mu_{[\xi,\zeta]}$; (M3) $\io_{v_\xi}\mu_\xi=0$.
Condition (M1) is the Hamilton equation and (M2) is the equivariance of $\mu$
(Remark~\ref{rem:equivariancebracket}). For (M3), Lemma~\ref{lem:module}(ii)
gives
\[
  \io_{v_\xi}\mu_\xi
  =\io_{v_\xi}\bigl(\nu_\xi\cdot\ka\bigr)
  =(\io_{v_\xi^N}\nu_\xi)\cdot\ka+\nu_\xi\cdot\io_{v_\xi}\ka
  =\nu_\xi\cdot\io_{v_\xi}\ka=0,
\]
since $\nu_\xi$ has form-degree $0$ and the splitting is basic.
\end{proof}

\begin{remark}
Theorem~\ref{thm:onesteptie} is the mechanism by which the Leibniz-algebraic
moment maps of this paper interact with the $L_\infty$-theory: for basic
splittings the two theories are generated by the same cocycle $\vperp-\mu$. In
particular all consequences of \cite[\S\S6--7]{RelHMM} -- explicit higher
components, existence and uniqueness statements, and the vanishing of the
algebraic obstruction -- apply to split relative Hamiltonian $G$-spaces.
\end{remark}

\subsection{Vanishing and fixed points}

The following relative analogue of \cite[Prop.~2.5]{Blacker} localizes zeros of
Hamiltonian pairs on the \emph{source}, where relative nondegeneracy lives. Note
that no compactness of $M$ is needed: the critical point is produced on $N$ and
transported through $F$.

\begin{proposition}\label{prop:vanishing}
Let $(F,\vperp)$ be relative $k$-plectic, let $\ka\in\Om^{k-1}(F)$ be
$\dF$-closed, let $f\in C^\infty(N)$, and suppose $\sigma:=f\cdot\ka$ is
Hamiltonian with $F$-pair $v_\sigma$. If $y\in N$ is a critical point of $f$
lying in the image of $F$, then $v_\sigma$ vanishes along the fiber over $y$:
$v_\sigma^M(x)=0$ and $v_\sigma^N(y)=0$ for every $x\in F^{-1}(y)$. In
particular, if $N$ is compact and $F$ is surjective, the vanishing set of
$v_\sigma^M$ is nonempty.
\end{proposition}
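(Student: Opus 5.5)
The plan is to read the Hamilton equation for $\sigma=f\cdot\ka$ through the module calculus, and then use that a critical point kills the whole right-hand side at the relevant points. By Remark~\ref{rem:functions} (equivalently Lemma~\ref{lem:module}(i) in degree $0$) and $\dF\ka=0$, we have
\[
  \dF\sigma=df\cdot\ka+f\cdot\dF\ka=df\cdot\ka=\bigl(df\wedge\ka_N,\ F^*df\wedge\ka_M\bigr).
\]
So \eqref{eq:hamiltonian} becomes, componentwise,
\[
  df\wedge\ka_N=-\io_{v_\sigma^N}\omega,
  \qquad
  F^*df\wedge\ka_M=-\io_{v_\sigma^M}\eta .
\]
This is the same unpacking as \eqref{eq:splitmoment}, with $\nu_\xi$ replaced by $f$.

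Now fix a critical point $y$ of $f$ and a point $x\in F^{-1}(y)$, which exists because $y\in F(M)$.
\begin{itemize}[leftmargin=2.2em]
\item Since $df_y=0$, the first equation evaluated at $y$ gives $\io_{v_\sigma^N(y)}\omega_y=0$.
\item Since $(F^*df)_x=df_y\circ T_xF=0$, the second equation evaluated at $x$ gives $\io_{v_\sigma^M(x)}\eta_x=0$.
\item The pair $v_\sigma$ is $F$-related, so $T_xF\bigl(v_\sigma^M(x)\bigr)=v_\sigma^N(y)$. Hence $\io_{T_xF(v_\sigma^M(x))}\omega_y=0$ as well.
\end{itemize}
Thus the vector $w=v_\sigma^M(x)$ lies in the kernel of the map $w\mapsto(\io_{T_xF(w)}\omega,\io_w\eta)$. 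Relative $k$-plecticity (Definition~\ref{def:relkplectic}) says this map is injective, so $v_\sigma^M(x)=0$.

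It then follows that $v_\sigma^N(y)=T_xF(0)=0$. This holds for every $x$ in the fiber over $y$, which is the claim.

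The main subtlety is that nondegeneracy is only imposed on the source. We therefore cannot conclude $v_\sigma^N(y)=0$ directly from $\io_{v_\sigma^N(y)}\omega_y=0$. This is exactly why the hypothesis that $y$ lies in the image of $F$ is needed: the target vanishing is transported from the source through $F$-relatedness, rather than derived from any nondegeneracy on $N$. One must also take care that the evaluation of $\omega$ in the nondegeneracy condition is at $F(x)=y$, so that the two pieces of information combine at the same pair of points.

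For the final assertion, compactness of $N$ gives a critical point of $f$, for instance a maximum. Surjectivity of $F$ places this point in the image and makes its fiber nonempty. The first part then produces points $x\in M$ with $v_\sigma^M(x)=0$.
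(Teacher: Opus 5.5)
Your proof is correct and is essentially the paper's argument: you unpack $\dF(f\cdot\ka)=df\cdot\ka$ componentwise, evaluate at the critical point $y$ and at each $x\in F^{-1}(y)$, use $F$-relatedness to place $v_\sigma^M(x)$ in the kernel of the relative nondegeneracy map, and then transport the vanishing to $v_\sigma^N(y)=T_xF(v_\sigma^M(x))=0$. One harmless slip: by \eqref{eq:module}, the $M$-component of $df\cdot\ka$ is $-F^*df\wedge\ka_M$, because of the factor $(-1)^{|df|}$; your componentwise equations are nonetheless stated correctly, and in any case the argument only uses $df_y=0$.
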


\begin{proof}
By Lemma~\ref{lem:module}(i) and $\dF\ka=0$,
$-\io_{v_\sigma}\vperp=\dF(f\cdot\ka)=df\cdot\ka$, i.e.
\[
  \io_{v_\sigma^N}\omega=-df\wedge\ka_N,
  \qquad
  \io_{v_\sigma^M}\eta=F^*df\wedge\ka_M .
\]
Let $x\in F^{-1}(y)$ with $df_y=0$. Then $\io_{v_\sigma^N}\omega$ vanishes at
$y$ and $\io_{v_\sigma^M}\eta$ vanishes at $x$ (as $(F^*df)_x=df_y\circ T_xF=0$).
Since $v_\sigma$ is an $F$-pair, $v_\sigma^N(y)=T_xF\bigl(v_\sigma^M(x)\bigr)$,
so the nondegeneracy of $\vperp$ at $x$ (Definition~\ref{def:relkplectic})
applied to $w=v_\sigma^M(x)$ forces $v_\sigma^M(x)=0$, and then
$v_\sigma^N(y)=0$. If $N$ is compact, $f$ has a critical point $y$, and
surjectivity of $F$ provides $x\in F^{-1}(y)$.
\end{proof}

\begin{proposition}[Fixed points for split torus actions]\label{prop:fixedpoints}
Let $T$ be a torus acting on the relative $k$-plectic $(F,\vperp)$ with a split
relative moment map $\mu=\nu\cdot\ka$. If $N$ is compact and $F$ is surjective,
then the fixed point set of $T$ in $M$ is nonempty.
\end{proposition}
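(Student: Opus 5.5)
The plan is to reduce to Proposition~\ref{prop:vanishing} by choosing a generic element of the Lie algebra of the torus. Let $\britume$ be the Lie algebra of $T$. Choose $\xi\in\britume$ whose one-parameter subgroup $\{\exp(t\xi)\}$ is dense in $T$; such topological generators exist for any torus and form a dense set in $\britume$. Then the fixed point set of $T$ in $M$ coincides with the zero set of the fundamental vector field $v_\xi^M$. Indeed, a point $x$ with $v_\xi^M(x)=0$ is fixed by $\exp(t\xi)$ for all $t$. By density of the subgroup and continuity of the action, $x$ is then fixed by all of $T$. The converse inclusion is trivial.

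Next, we apply Proposition~\ref{prop:vanishing} with $f:=\nu_\xi\in C^\infty(N)$ and $\sigma:=\mu_\xi=\nu_\xi\cdot\ka$. The hypotheses are checked as follows:
\begin{enumerate}[label=\textup{(\alph*)},leftmargin=2.2em]
\item $\ka$ is $\dF$-closed by Definition~\ref{def:split}.
\item $\sigma$ is Hamiltonian with $F$-pair $v_\xi$, by the moment map relation \eqref{eq:momentrelations}, i.e.\ \eqref{eq:splitmoment}.
\item $(F,\vperp)$ is relative $k$-plectic by assumption.
\end{enumerate}
Because $\vperp$ is nondegenerate, the Hamiltonian $F$-pair of $\sigma$ is unique, so the pair $v_\sigma$ produced in the proposition is exactly the fundamental pair $v_\xi$.

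Since $N$ is compact, the smooth function $\nu_\xi$ attains a maximum at some $y\in N$, and $y$ is then a critical point. Since $F$ is surjective, $F^{-1}(y)$ is nonempty. Proposition~\ref{prop:vanishing} then gives $v_\xi^M(x)=0$ for every $x\in F^{-1}(y)$, and by the first paragraph every such $x$ is a $T$-fixed point. This is precisely the last sentence of Proposition~\ref{prop:vanishing}, applied to $f=\nu_\xi$.

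The only step requiring care is the passage from the zero set of a single vector field to fixedness under the whole torus. This rests on choosing $\xi$ generic, so that $\overline{\exp(\R\xi)}=T$, which is the standard Kronecker density fact. No Lie-algebraic input beyond that is needed, since the Leibniz and equivariance conditions play no role here. It is also worth remarking that the argument in fact produces a whole fiber $F^{-1}(y)$ of fixed points over each critical point of $\nu_\xi$.
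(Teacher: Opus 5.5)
Your proposal is correct and follows the paper's proof. You choose a topological generator $\xi$ of $T$, apply Proposition~\ref{prop:vanishing} to $\mu_\xi=\nu_\xi\cdot\ka$ with $f=\nu_\xi$ (compactness of $N$ gives a critical point, and surjectivity of $F$ puts it in the image), and conclude by density of $\exp(\R\xi)$ that a zero of $v_\xi^M$ is $T$-fixed. Your explicit check that the zero set of $v_\xi^M$ equals the fixed set is a detail the paper leaves implicit. The uniqueness remark is harmless but unnecessary, since Proposition~\ref{prop:vanishing} applies to any Hamiltonian $F$-pair of $\sigma$, in particular to $v_\xi$.
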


\begin{proof}
Choose a generator $\xi\in\britume$, so that the closure of
$\exp(\R\xi)$ is $T$ and the fundamental pair $v_\xi$ generates the $T$-orbits.
The component $\mu_\xi=\nu_\xi\cdot\ka$ is of the form covered by
Proposition~\ref{prop:vanishing} with $f=\nu_\xi$; hence $v_\xi^M$ vanishes at
some $x\in M$. Since $\xi$ generates $T$ and $T$ is connected, $x$ is a fixed
point of the $T$-action on $M$.
\end{proof}

\begin{remark}
At $M=\emptyset$ (with the convention that ``$F$ surjective'' is dropped and the
vanishing is read on $N$) these statements recover
\cite[Prop.~2.5, Prop.~3.11]{Blacker}. The relative refinement is that the fixed
point is produced on the source $M$, where the nondegeneracy of $\vperp$ is
concentrated -- for instance, on a quasi-Hamiltonian $G$-space the target
$G$ carries no nondegeneracy at all, yet the argument still yields fixed points
in $M$ for split torus symmetries.
\end{remark}
%% ============================================================
\section{Relative multisymplectic reduction}\label{sec:reduction}
%% ============================================================

Throughout this section $(F,\vperp,G,\mu)$ is a relative Hamiltonian $G$-space
in the sense of Definition~\ref{def:relmoment}.

\subsection{Level pairs}

\begin{definition}\label{def:levelpair}
Let $\phi\in\Om^{k-1}(F,\gdual)$ be $\dF$-closed and $G$-equivariant (for the
same action as $\mu$). Regarding the components
$\mu_N,\phi_N$ as sections of $\Lambda^{k-1}T^*N\otimes\gdual$ and
$\mu_M,\phi_M$ as sections of $\Lambda^{k-2}T^*M\otimes\gdual$, the
\emph{level pair} of $\mu$ at $\phi$ is
\begin{equation}\label{eq:levelpair}
  L_N:=\{y\in N:\ \mu_N(y)=\phi_N(y)\},
  \qquad
  L_M:=F^{-1}(L_N)\,\cap\,\{x\in M:\ \mu_M(x)=\phi_M(x)\} .
\end{equation}
By construction $F$ restricts to an equivariant map
$F_L\colon L_M\to L_N$, and $L_N,L_M$ are $G$-invariant by the equivariance of
$\mu$ and $\phi$.
\end{definition}

We write $i_N\colon L_N\hookrightarrow N$ and $i_M\colon L_M\hookrightarrow M$
for the inclusions; the square $(i_N,i_M)\colon F_L\to F$ is an equivariant map
of pairs, so the pullback
$\Phi^*(\alpha,\beta):=(i_N^*\alpha,i_M^*\beta)$ is a cochain map intertwining
the relative Cartan calculi \cite[Prop.~10.2]{RelHMM}. Note also that a
pointwise equality of sections restricts through any map \emph{into} the
coincidence locus:
\begin{equation}\label{eq:coincidence}
  i_N^*\mu_{N,\xi}=i_N^*\phi_{N,\xi},
  \qquad
  i_M^*\mu_{M,\xi}=i_M^*\phi_{M,\xi},
  \qquad
  (F\circ i_M)^*\mu_{N,\xi}=(F\circ i_M)^*\phi_{N,\xi},
\end{equation}
the last identity because $F\circ i_M$ takes values in $L_N$.

\begin{remark}\label{rem:isotropylevels}
As in \cite[\S4]{Blacker} one may treat levels $\phi$ that are merely
$\dF$-closed, replacing $G$ by the isotropy subgroup
$G_\phi\subseteq G$ of $\phi$; every statement below adapts verbatim, at the
cost of heavier notation. We restrict to equivariant levels for clarity.
\end{remark}

\subsection{The reduction theorem}

\begin{theorem}[Relative multisymplectic reduction]\label{thm:reduction}
Let $(F,\vperp,G,\mu)$ be a relative \textup{(}pre-\textup{)}$k$-plectic
Hamiltonian $G$-space, let $\phi\in\Om^{k-1}(F,\gdual)$ be $\dF$-closed and
equivariant, and let $(L_N,L_M)$ be the level pair \eqref{eq:levelpair}. Assume
$L_N\subseteq N$ and $L_M\subseteq M$ are embedded submanifolds and that $G$
acts freely on $L_N$ and on $L_M$. Then:
\begin{enumerate}[label=\textup{(\roman*)},leftmargin=2.2em]
\item the quotients $N_\phi:=L_N/G$ and $M_\phi:=L_M/G$ are smooth manifolds and
$F_L$ descends to a smooth map $F_\phi\colon M_\phi\to N_\phi$;
\item there is a unique pair
$\vperp_\phi=(\omega_\phi,\eta_\phi)\in\Om^{k+1}(F_\phi)$ with
\begin{equation}\label{eq:reducedform}
  i_N^*\omega=\pi_N^*\omega_\phi,
  \qquad
  i_M^*\eta=\pi_M^*\eta_\phi,
\end{equation}
where $\pi_N\colon L_N\to N_\phi$ and $\pi_M\colon L_M\to M_\phi$ are the
quotient maps;
\item $\vperp_\phi$ is $d_{F_\phi}$-closed, i.e.\ a relative
pre-$k$-plectic structure on $F_\phi$.
\end{enumerate}
We call $(F_\phi,\vperp_\phi)$ the \emph{reduced relative space} and
$\vperp_\phi$ the \emph{reduced relative \textup{(}pre\textup{)}multisymplectic
structure}.
\end{theorem}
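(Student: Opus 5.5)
The plan is to treat the three parts in order, reducing (ii) and (iii) to the standard fact that basic forms on the total space of a principal bundle descend uniquely. The one genuinely relative point is the $M$-side horizontality, which will come from the second component of $\dF\phi=0$.

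\emph{Step 1 (quotients and the reduced map).} Since $G$ is compact, the actions on $L_N$ and $L_M$ are proper. They are free by hypothesis, and $L_N,L_M$ are $G$-invariant embedded submanifolds (Definition~\ref{def:levelpair}). Hence $\pi_N\colon L_N\to N_\phi$ and $\pi_M\colon L_M\to M_\phi$ are principal $G$-bundles, in particular surjective submersions. The restriction $F_L\colon L_M\to L_N$ is smooth, since $L_N$ is embedded, and it is equivariant. So $\pi_N\circ F_L$ is constant on $G$-orbits in $L_M$ and factors through $\pi_M$. The factored map $F_\phi$ is smooth by the universal property of surjective submersions, and it satisfies $F_\phi\circ\pi_M=\pi_N\circ F_L$.

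\emph{Step 2 (basicness, the key step).} I will show that $i_N^*\omega$ and $i_M^*\eta$ are $G$-basic. Invariance is immediate: $G$ preserves $\vperp$, and $i_N,i_M$ are equivariant. For horizontality, note that the fundamental fields are tangent to the invariant submanifolds $L_N$ and $L_M$. Their restrictions are the fundamental fields of the restricted actions, and they are $F_L$-related. On the target, the componentwise Hamilton equation $d\mu_{N,\xi}=-\io_{v_\xi^N}\omega$ and \eqref{eq:coincidence} give
\[
\io_{v_\xi}i_N^*\omega=-d\,i_N^*\mu_{N,\xi}=-d\,i_N^*\phi_{N,\xi}=-i_N^*d\phi_{N,\xi}=0,
\]
because the first component of $\dF\phi=0$ is $d\phi_N=0$. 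On the source, the Hamilton equation gives $\io_{v_\xi^M}\eta=d\mu_{M,\xi}-F^*\mu_{N,\xi}$. Applying all three identities of \eqref{eq:coincidence}, the third being needed for $(F\circ i_M)^*\mu_N$, yields
\[
\io_{v_\xi}i_M^*\eta=i_M^*\bigl(d\phi_{M,\xi}-F^*\phi_{N,\xi}\bigr)=0,
\]
by the second component of $\dF\phi=0$. This is the step I expect to be the delicate one. One has to check that pullback to $L_M$ of the pointwise coincidence really produces equality of the \emph{forms} $i_M^*\mu_M=i_M^*\phi_M$, so that $d$ may be applied after pullback. One also has to check the third identity, which holds because $F\circ i_M$ lands in $L_N$. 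Once these are in place, the source horizontality is forced by relative closedness of the level, as advertised.

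\emph{Step 3 (descent and closedness).} Basic forms on the total space of a principal bundle are exactly the pullbacks of forms on the base. Because $\pi^*$ is injective for a surjective submersion, the forms $\omega_\phi$ and $\eta_\phi$ satisfying \eqref{eq:reducedform} exist and are unique, which proves (ii). For (iii), injectivity of $\pi^*$ again does the work. On the target, $\pi_N^*d\omega_\phi=i_N^*d\omega=0$. On the source,
\[
\pi_M^*\bigl(F_\phi^*\omega_\phi-d\eta_\phi\bigr)=F_L^*\pi_N^*\omega_\phi-i_M^*d\eta=F_L^*i_N^*\omega-i_M^*d\eta=i_M^*\bigl(F^*\omega-d\eta\bigr)=0.
\]
This uses $F_\phi\circ\pi_M=\pi_N\circ F_L$, $i_N\circ F_L=F\circ i_M$, and $\dF\vperp=0$. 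Hence $d_{F_\phi}\vperp_\phi=0$.
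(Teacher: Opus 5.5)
Your proposal is correct and follows the paper's proof essentially step for step. The steps match: smooth quotients from free proper actions; target horizontality from the $N$-component of the Hamilton equation together with $d\phi_N=0$; source horizontality from the $M$-component, all three identities of \eqref{eq:coincidence}, and $F^*\phi_N=d\phi_M$; then descent and closedness via injectivity of $\pi_N^*,\pi_M^*$.

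The one slip is harmless: from \eqref{eq:cone} and \eqref{eq:ops} the source Hamilton equation is $\io_{v_\xi^M}\eta=F^*\mu_{N,\xi}-d\mu_{M,\xi}$, which is the negative of what you wrote. Your version matches the componentwise reading printed in Definition~\ref{def:relkplectic}, which carries the same sign error. Since the expression vanishes either way, your argument is unaffected.
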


The reduction descends the whole square of level pair and quotient:
\begin{equation}\label{eq:reduction-square}
\begin{tikzcd}[row sep=2.0em, column sep=3.0em,
  /tikz/every label/.append style={font=\footnotesize}]
L_M \arrow[r, "F_L"] \arrow[d, "\pi_M"']
& L_N \arrow[d, "\pi_N"]\\
M_\phi \arrow[r, "F_\phi"']
& N_\phi
\end{tikzcd}
\qquad\qquad
\begin{tikzcd}[row sep=1.0em, column sep=1.4em,
  /tikz/every label/.append style={font=\scriptsize}]
(F,\vperp) \arrow[r]
& (L_N,L_M) \arrow[r]
& (M_\phi{\to}N_\phi) \arrow[r]
& \vperp_\phi
\end{tikzcd}
\end{equation}
the left square commuting by equivariance of $F_L$, the right chain recording
the passage from the relative Hamiltonian space through the level pair and the
quotient to the reduced relative form.

\begin{proof}
\emph{Descent of the map.}
(i) Freeness and properness ($G$ compact) give smooth quotients; $F_L$ is
equivariant, so $\pi_N\circ F_L$ is $G$-invariant and factors through a smooth
$F_\phi$ with $F_\phi\circ\pi_M=\pi_N\circ F_L$.

\emph{Descent of the form: invariance.}
(ii) We must show that $i_N^*\omega$ and $i_M^*\eta$ are basic, i.e.\
$G$-invariant and horizontal, for the respective quotients. Invariance is clear:
$G$ preserves $\vperp$, the levels are invariant, and pullback along the
inclusions commutes with the action.

\emph{Horizontality on the target.} Fix $\xi\in\g$. The fundamental field
$v_\xi^N$ is tangent to $L_N$, so contraction commutes with $i_N^*$, and the
$N$-component of the Hamilton equation
$\dF\mu_\xi=-\io_{v_\xi}\vperp$ reads $\io_{v_\xi^N}\omega=-d\mu_{N,\xi}$.
Hence, by \eqref{eq:coincidence} and the $N$-component $d\phi_{N,\xi}=0$ of
$\dF\phi_\xi=0$,
\[
  \io_{v_\xi^N}\,i_N^*\omega
  =i_N^*\bigl(\io_{v_\xi^N}\omega\bigr)
  =-\,d\,i_N^*\mu_{N,\xi}
  =-\,d\,i_N^*\phi_{N,\xi}
  =-\,i_N^*\,d\phi_{N,\xi}=0 .
\]

\emph{Horizontality on the source.} The $M$-component of the Hamilton equation
reads $\io_{v_\xi^M}\eta=F^*\mu_{N,\xi}-d\mu_{M,\xi}$ (recall
$-\io_{v_\xi}(\omega,\eta)=(-\io_{v_\xi^N}\omega,\ \io_{v_\xi^M}\eta)$ and
\eqref{eq:cone}). Since $v_\xi^M$ is tangent to $L_M$ and $F\circ i_M$ maps into
$L_N$, equations \eqref{eq:coincidence} give
\[
  \io_{v_\xi^M}\,i_M^*\eta
  =i_M^*\bigl(F^*\mu_{N,\xi}-d\mu_{M,\xi}\bigr)
  =(F\circ i_M)^*\phi_{N,\xi}-d\,i_M^*\phi_{M,\xi}
  =i_M^*\bigl(F^*\phi_{N,\xi}-d\phi_{M,\xi}\bigr)
  =0,
\]
the last equality being precisely the $M$-component of $\dF\phi_\xi=0$. Thus the
relative closedness of the level supplies the source horizontality.

\emph{Why this is a genuinely relative phenomenon.} It is worth pausing on the
mechanism, as it has no absolute analogue. In the target argument,
horizontality of $i_N^*\omega$ follows from the Hamilton equation alone, exactly
as in \cite{Blacker}: the target form is Hamiltonian-basic because $\omega$ is
the honest multisymplectic form and $\mu_N$ is its moment map. The source form
$\eta$, by contrast, is \emph{not} separately assumed to be Hamiltonian-basic,
and there is no independent source moment map forcing it to be so; its
horizontality is instead \emph{forced}, through the $M$-component
$F^*\phi_{N,\xi}=d\phi_{M,\xi}$ of the single equation $\dF\phi_\xi=0$, by the
compatibility of the source and target levels \emph{inside the mapping cone}.
In other words, the trivializing role of $\eta$ --- it exists to satisfy
$F^*\omega=d\eta$, not to carry its own symmetry data --- is exactly what makes
its descent automatic once the target descends. This is the conceptual point of
the theorem: relative closedness of the level converts target horizontality
into source horizontality at no extra cost, and the source component reduces
because it was never independent to begin with.

\emph{Geometric meaning of the reduced map.} The reduced map itself has a
transparent description: a point of $M_\phi$ is a $G$-orbit of source solutions
of the level constraint, a point of $N_\phi$ a $G$-orbit of target solutions,
and $F_\phi$ sends the orbit of $x$ to the orbit of $F(x)$ --- symmetry classes
of constrained states mapping to symmetry classes of their images. The fibres
of $F_\phi$ are the reduced fibres of $F$, and the reduced relative form
$\vperp_\phi$ is the residual coupling between them: the target component
$\omega_\phi$ is the reduced dynamics on the constrained target, and the source
component $\eta_\phi$ is the reduced trivialization tying the constrained
source to it. Reduction, in other words, preserves the entire
bulk--trivialization architecture of the relative structure, one symmetry
class at a time.

Being basic, $i_N^*\omega$ and $i_M^*\eta$ descend uniquely through the
submersions $\pi_N,\pi_M$, yielding \eqref{eq:reducedform}; uniqueness holds
because $\pi_N^*$ and $\pi_M^*$ are injective.

\emph{Closedness.}
(iii) From $d\,i_N^*\omega=i_N^*d\omega=0$ and injectivity of $\pi_N^*$ we get
$d\omega_\phi=0$. For the second component of $d_{F_\phi}\vperp_\phi$, compute
on $L_M$, using $F_\phi\circ\pi_M=\pi_N\circ F_L$ and the $M$-component
$F^*\omega=d\eta$ of $\dF\vperp=0$:
\[
  \pi_M^*\bigl(F_\phi^*\omega_\phi\bigr)
  =F_L^*\,\pi_N^*\omega_\phi
  =F_L^*\,i_N^*\omega
  =i_M^*\,F^*\omega
  =i_M^*\,d\eta
  =d\,i_M^*\eta
  =\pi_M^*\,d\eta_\phi .
\]
Injectivity of $\pi_M^*$ gives $F_\phi^*\omega_\phi=d\eta_\phi$, i.e.\
$d_{F_\phi}\vperp_\phi=0$.
\end{proof}

\begin{remark}\label{rem:degenerate}
As in the absolute theory, $\vperp_\phi$ need not be nondegenerate: relative
reduction produces relative \emph{pre}multisymplectic structures in general.
Sufficient conditions for nondegeneracy of the reduced pair constitute an
interesting open problem, mirroring \cite[\S7(1)]{Blacker}; see
Section~\ref{sec:outlook}.
\end{remark}

\begin{remark}[Weakening the hypotheses]\label{rem:weaken}
An inspection of the proof shows that only the following were used: an action
preserving $\vperp$; equivariant forms $\mu,\phi$ with
$\dF(\mu_\xi-\phi_\xi)=-\io_{v_\xi}\vperp$; and smooth structure plus freeness
along the level pair. Neither nondegeneracy nor any homogeneity of $\vperp$
enters -- the relative counterpart of \cite[Rem.~4.2]{Blacker}. In particular
the theorem applies to arbitrary closed relative forms and to actions with
merely locally free level pairs after passing to orbifold quotients.
\end{remark}

\subsection{Reduction of dynamics}

\begin{theorem}[Reduction of dynamics]\label{thm:dynamics}
In the situation of Theorem~\ref{thm:reduction}, let
$\sigma\in\Ham^{k-1}(F,\vperp)$ be $G$-invariant with $G$-invariant Hamiltonian
$F$-pair $v_\sigma=(v_\sigma^N,v_\sigma^M)$. Suppose either
\begin{enumerate}[label=\textup{(\roman*)},leftmargin=2.2em]
\item $v_\sigma^N$ is tangent to $L_N$ and $v_\sigma^M$ is tangent to $L_M$; or
\item $\{\sigma,\,\mu_\xi-\phi_\xi\}=0$ for all $\xi\in\g$, where
$\{\,,\}=\Lder_{v_\sigma}$ as in \eqref{eq:bracket}.
\end{enumerate}
Then $v_\sigma$ descends to an $F_\phi$-pair
$\bar v=(\bar v_N,\bar v_M)$ on the reduced relative space, the relative form
$\dF\sigma$ descends to $\Om^{k}(F_\phi)$, and the descended form satisfies the
reduced Hamilton equation
\[
  \overline{\dF\sigma}\;=\;-\,\io_{\bar v}\,\vperp_\phi .
\]
\end{theorem}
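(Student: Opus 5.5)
First I will reduce case (ii) to case (i), and then prove the descent statement under hypothesis (i).

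\emph{Reduction of (ii) to (i).} Since $\sigma$ is Hamiltonian, $\{\sigma,\mu_\xi-\phi_\xi\}=\Lder_{v_\sigma}(\mu_\xi-\phi_\xi)$. The hypothesis therefore says that the flow of $v_\sigma$ preserves each component of $\mu-\phi$. Recall that $L_N$ is the zero set of $\mu_N-\phi_N$. If the flow of $v_\sigma^N$ preserves $\mu_N-\phi_N$, it maps $L_N$ into itself, so $v_\sigma^N$ is tangent to the embedded submanifold $L_N$.

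On the source, $L_M$ is $F^{-1}(L_N)$ intersected with the zero set of $\mu_M-\phi_M$. The flow of $v_\sigma^M$ covers the flow of $v_\sigma^N$ because the fields are $F$-related, so it preserves $F^{-1}(L_N)$. It also preserves $\mu_M-\phi_M$, so it preserves $L_M$, and $v_\sigma^M$ is tangent to $L_M$. A pointwise vanishing locus is preserved by any flow preserving the section; tangency then follows from embeddedness. Hence (ii) implies (i).

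\emph{Descent of the pair.} Assume (i). The restrictions $v_\sigma^N|_{L_N}$ and $v_\sigma^M|_{L_M}$ are $G$-invariant vector fields on $L_N$ and $L_M$, so they push forward along the principal bundles $\pi_N,\pi_M$ to fields $\bar v_N,\bar v_M$ with $\pi_N$- and $\pi_M$-relatedness. Since $F_L$ relates the restricted fields and $F_\phi\circ\pi_M=\pi_N\circ F_L$, we get
\[
TF_\phi(\bar v_M)=T\pi_N\,TF_L(v^M_\sigma)=\bar v_N\circ F_\phi .
\]
Here I use surjectivity of $\pi_M$ to conclude pointwise. Thus $\bar v$ is an $F_\phi$-pair.

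\emph{Descent of the Hamilton equation.} Rather than descend $\sigma$ itself, I will descend $\dF\sigma=-\io_{v_\sigma}\vperp$ directly. The pullback $\Phi^*=(i_N^*,i_M^*)$ commutes with relative contraction by fields tangent to the level pair. Combining this with \eqref{eq:reducedform}, I compute
\[
\Phi^*\dF\sigma=-\io_{v_\sigma}\Phi^*\vperp=-\io_{v_\sigma}(\pi_N^*\omega_\phi,\pi_M^*\eta_\phi)=\pi^*\bigl(-\io_{\bar v}\vperp_\phi\bigr),
\]
using $\io_{v}\pi^*=\pi^*\io_{\bar v}$ for $\pi$-related fields, componentwise with the same sign convention \eqref{eq:ops}. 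This exhibits $\Phi^*\dF\sigma$ as basic; in particular it is horizontal and invariant. So it descends uniquely, by injectivity of $\pi^*$, to $\overline{\dF\sigma}=-\io_{\bar v}\vperp_\phi$.

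The main subtlety is the tangency in case (ii), specifically on the source, where $L_M$ involves the preimage $F^{-1}(L_N)$. The $F$-relatedness of the flows is what handles this. I will also note that $\sigma$ itself need not descend (its restriction may fail to be horizontal); only $\dF\sigma$ is claimed to descend, so the statement is phrased correctly.
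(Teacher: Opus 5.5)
Your proof is correct and follows the paper's outline: (ii)$\Rightarrow$(i) via flows preserving the coincidence loci, then descent of the invariant tangent pair, then the reduced Hamilton equation via injectivity of $\pi^*$. The one difference is how you show $\Phi^*(\dF\sigma)$ is basic. The paper checks horizontality separately, using $\dF\mu_\xi=-\io_{v_\xi}\vperp$, the coincidence identities and $\dF\phi_\xi=0$. You instead note that $\Phi^*\dF\sigma=\pi^*(-\io_{\bar v}\vperp_\phi)$, which follows from $\Phi^*\vperp=\pi^*\vperp_\phi$ and $\pi$-relatedness, already exhibits it as a pullback, so that separate check is redundant once Theorem~\ref{thm:reduction} is available.
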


\begin{proof}
First note that (ii) implies (i): the condition
$\Lder_{v_\sigma}(\mu_\xi-\phi_\xi)=0$ for all $\xi$ says the flow of
$v_\sigma$ (an $F$-pair of flows intertwined by $F$) preserves the pair of
sections $\mu-\phi$ componentwise, hence preserves their coincidence loci
\eqref{eq:levelpair}; tangency follows where the loci are submanifolds.

Assume (i). Invariance and tangency let $v_\sigma|_{L}$ descend through the
free quotients to vector fields $\bar v_N,\bar v_M$, which are
$F_\phi$-related because $v_\sigma^N,v_\sigma^M$ are $F_L$-related and the
quotient squares commute; thus $\bar v$ is an $F_\phi$-pair.

Next, $\Phi^*(\dF\sigma)=-\Phi^*(\io_{v_\sigma}\vperp)$ is basic on the level
pair. Invariance is clear. For horizontality, using tangency (so that
$\io_{v_\sigma}$ commutes with $\Phi^*$), anticommutativity of relative
contractions, the Hamilton equation for $\mu_\xi$, the cochain property of
$\Phi^*$, and \eqref{eq:coincidence}:
\[
  \io_{v_\xi}\,\Phi^*\bigl(\io_{v_\sigma}\vperp\bigr)
  =-\,\Phi^*\bigl(\io_{v_\sigma}\io_{v_\xi}\vperp\bigr)
  =\Phi^*\bigl(\io_{v_\sigma}\dF\mu_\xi\bigr)
  =\io_{v_\sigma|_L}\,d_{F_L}\,\Phi^*\phi_\xi
  =\io_{v_\sigma|_L}\,\Phi^*\bigl(\dF\phi_\xi\bigr)
  =0 .
\]
Hence $\Phi^*(\dF\sigma)$ descends to a unique
$\overline{\dF\sigma}\in\Om^k(F_\phi)$ with
$\pi^*\overline{\dF\sigma}=\Phi^*(\dF\sigma)$, where
$\pi^*$ denotes the componentwise quotient pullback. Finally,
\[
  \pi^*\bigl(-\io_{\bar v}\vperp_\phi\bigr)
  =-\,\io_{v_\sigma|_L}\,\pi^*\vperp_\phi
  =-\,\io_{v_\sigma|_L}\,\Phi^*\vperp
  =\Phi^*\bigl(-\io_{v_\sigma}\vperp\bigr)
  =\Phi^*\bigl(\dF\sigma\bigr),
\]
using $\pi$-relatedness of $v_\sigma|_L$ and $\bar v$ and tangency once more;
injectivity of $\pi^*$ concludes.
\end{proof}

\begin{remark}
Condition (ii) is the relative form of the bracket criterion of
\cite[Thm.~4.3(ii)]{Blacker}. Invariant decomposable Hamiltonian
\emph{multivector} pairs can be treated by the same argument applied factorwise;
we refrain from developing the relative multivector calculus here and refer to
\cite[\S2]{Blacker} for the absolute picture.
\end{remark}

\subsection{Reduction of the observable algebra}

Theorem~\ref{thm:dynamics} reduces Hamiltonian data one observable at
a time.  We now upgrade it to a morphism of Leibniz algebras, in the
setting of Lemma~\ref{lem:leibniz}; since in the pre-$k$-plectic case
a Hamiltonian form does not determine its $F$-pair, the clean
statement is at the level of \emph{pairs}.  Write
$\widetilde\Ham{}^{k-1}(F,\vperp)
:=\{(\sigma,v):\ \dF\sigma=-\io_v\vperp\}$
with the Leibniz bracket
$\{(\sigma,v),(\tau,w)\}:=(\Lder_v\tau,[v,w])$, which satisfies the
identities of Lemma~\ref{lem:leibniz} verbatim, and let
$\Phi$, $\pi=(\pi_N,\pi_M)$ be the restriction and quotient maps of
Theorem~\ref{thm:reduction}.

\begin{definition}\label{def:reducible}
A Hamiltonian pair $(\sigma,v)$ is \emph{$\phi$-reducible} if
$\sigma$ and $v$ are $G$-invariant, $v$ is tangent to the level pair
\textup{(}condition \textup{(i)} of
Theorem~\ref{thm:dynamics}\textup{)}, and $\Phi^*\sigma$ is
horizontal, $\io_{v_\xi}\Phi^*\sigma=0$ for all $\xi\in\g$.  We write
$\widetilde\Ham{}^{k-1}_\phi(F,\vperp)$ for the set of
$\phi$-reducible pairs.
\end{definition}

\begin{theorem}[Reduction of the observable algebra]
\label{thm:observables}
Assume $G$ is connected and the hypotheses of
Theorem~\ref{thm:reduction} hold.  Then:
\begin{enumerate}[label=\textup{(\roman*)},leftmargin=2.2em]
\item $\widetilde\Ham{}^{k-1}_\phi(F,\vperp)$ is a Leibniz subalgebra
of $\widetilde\Ham{}^{k-1}(F,\vperp)$;
\item the assignment
\[
  \mathcal{R}(\sigma,v):=(\sigma_\phi,\bar v),
  \qquad
  \pi^*\sigma_\phi=\Phi^*\sigma,
  \quad
  \bar v=\text{the descent of }v|_L,
\]
is well defined, lands in
$\widetilde\Ham{}^{k-1}(F_\phi,\vperp_\phi)$, and is a morphism of
Leibniz algebras;
\item $\ker\mathcal{R}
=\{(\sigma,v)\ \phi\text{-reducible}:\ \Phi^*\sigma=0\ \text{and}\
v|_L\ \text{tangent to the orbits}\}$ is a two-sided Leibniz ideal,
and $\mathcal{R}$ induces an injective Leibniz morphism from the
quotient.
\end{enumerate}
\end{theorem}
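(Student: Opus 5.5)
The plan is to verify the three items in order, using the descent machinery from the proof of Theorem~\ref{thm:reduction} and Theorem~\ref{thm:dynamics} as a black box. The basic principle, used repeatedly, is that a relative form on the level pair descends uniquely through $\pi$ exactly when it is invariant and horizontal, since $\pi^*$ is injective. In addition, $\pi^*$ intertwines $\io$, $\Lder$ and $\dF$ with their reduced counterparts along $\pi$-related $F$-pairs.

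\emph{Step (i): closure.} Let $(\sigma,v),(\tau,w)$ be $\phi$-reducible. Invariance of $\Lder_v\tau$ and of $[v,w]$ is immediate. Tangency of $[v,w]$ to the level pair holds because brackets of fields tangent to a submanifold remain tangent. The real content is horizontality of $\Phi^*\Lder_v\tau$.

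By tangency, $\Phi^*\Lder_v\tau=\Lder_{v|_L}\Phi^*\tau$. We then use $\io_{v_\xi}\Lder_{v|_L}=\Lder_{v|_L}\io_{v_\xi}-\io_{[v,v_\xi]}$. The field $v$ is $G$-invariant and $G$ is connected, so $[v_\xi,v]=0$. Horizontality of $\Phi^*\tau$ then gives $\io_{v_\xi}\Phi^*\Lder_v\tau=0$. Connectedness is used precisely here, to pass from $G$-invariance to $[v_\xi,v]=0$.

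\emph{Step (ii): well-definedness and the Hamilton equation.} Since $\Phi^*\sigma$ is invariant and horizontal, it descends to a unique $\sigma_\phi$. The field $\bar v$ is defined as in Theorem~\ref{thm:dynamics}. The Hamilton equation follows by the same chain as in that proof:
\[
\pi^*d_{F_\phi}\sigma_\phi=d_{F_L}\Phi^*\sigma=\Phi^*\dF\sigma=-\Phi^*\io_v\vperp=-\io_{v|_L}\pi^*\vperp_\phi=-\pi^*\io_{\bar v}\vperp_\phi,
\]
and injectivity of $\pi^*$ concludes. For the morphism property, $\pi$-relatedness gives $\overline{[v,w]}=[\bar v,\bar w]$. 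Moreover
\[
\pi^*\Lder_{\bar v}\tau_\phi=\Lder_{v|_L}\pi^*\tau_\phi=\Lder_{v|_L}\Phi^*\tau=\Phi^*\Lder_v\tau,
\]
so $(\Lder_v\tau)_\phi=\Lder_{\bar v}\tau_\phi$. Linearity of $\mathcal R$ is clear.

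\emph{Step (iii): kernel and ideal.} We have $\mathcal R(\sigma,v)=0$ iff $\sigma_\phi=0$ and $\bar v=0$. The first condition is equivalent to $\Phi^*\sigma=0$, and the second to $v|_L$ being vertical, i.e.\ tangent to the orbits. The kernel of a Leibniz morphism is automatically a two-sided ideal, because $\mathcal R$ preserves brackets. The induced map from the quotient is therefore an injective Leibniz morphism by the first isomorphism theorem, which holds for Leibniz algebras exactly as for Lie algebras.

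The main obstacle is step (i), and specifically the horizontality of the bracket. The paper's convention is $\{(\sigma,v),(\tau,w)\}=(\Lder_v\tau,[v,w])$, which only requires $\Phi^*\tau$ horizontal and $v$ commuting with $v_\xi$. If invariance alone were not enough to give $[v_\xi,v]=0$ in the relative setting, I would check this componentwise on $N$ and on $M$; the relative Lie bracket of $F$-pairs is componentwise, so no new issue should arise.
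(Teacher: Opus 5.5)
Your proposal is correct and follows the paper's proof essentially step for step. The shared steps are: horizontality of $\Lder_{v|_L}\Phi^*\tau$ via $[\Lder_u,\io_w]=\io_{[u,w]}$ together with $[v_\xi,v]=0$; descent of $\sigma$ and the reduced Hamilton equation and bracket compatibility via injectivity of $\pi^*$ and $\pi$-relatedness; and the kernel as the kernel of a Leibniz morphism. One minor aside, which applies equally to the paper: invariance under all of $G$ already gives $[v_\xi,v]=0$ without connectedness, since connectedness is only needed for the converse, so attributing this step to connectedness is harmless but unnecessary.
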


\begin{proof}
(i)  Let $(\sigma,v),(\tau,w)$ be $\phi$-reducible.  Invariance of
$\{\sigma,\tau\}=\Lder_v\tau$ and of $[v,w]$ is clear; tangency of
$[v,w]$ holds componentwise because tangency is preserved by Lie
brackets.  For horizontality of
$\Phi^*\{\sigma,\tau\}=\Lder_{v|_L}\Phi^*\tau$ (tangency lets
$\Lder_v$ commute with $\Phi^*$): since $G$ is connected and $v$ is
invariant, $[v_\xi,v]=0$, so
\[
  \io_{v_\xi}\,\Lder_{v|_L}\,\Phi^*\tau
  =\Lder_{v|_L}\,\io_{v_\xi}\,\Phi^*\tau
  -\io_{[v,v_\xi]|_L}\,\Phi^*\tau
  =0 ,
\]
both terms vanishing by horizontality of $\Phi^*\tau$ and
$[v,v_\xi]=0$.

(ii)  $\Phi^*\sigma$ is invariant and horizontal, hence basic, and
$\sigma_\phi$ exists and is unique; $\bar v$ exists as in the proof
of Theorem~\ref{thm:dynamics}.  The pair is Hamiltonian downstairs:
using injectivity of $\pi^*$, the cochain property of $\Phi^*$ and
$\pi^*$, tangency, and the defining property of $\vperp_\phi$,
\[
  \pi^*\bigl(d_{F_\phi}\sigma_\phi+\io_{\bar v}\vperp_\phi\bigr)
  =\Phi^*\bigl(\dF\sigma\bigr)+\io_{v|_L}\,\Phi^*\vperp
  =\Phi^*\bigl(\dF\sigma+\io_{v}\vperp\bigr)=0 .
\]
For the morphism property,
$\pi^*\mathcal{R}\{\,(\sigma,v),(\tau,w)\,\}^{(1)}
=\Phi^*\Lder_v\tau=\Lder_{v|_L}\pi^*\tau_\phi
=\pi^*\Lder_{\bar v}\tau_\phi$, whence
$\mathcal{R}(\Lder_v\tau)=\Lder_{\bar v}\tau_\phi
=\{\mathcal{R}(\sigma,v),\mathcal{R}(\tau,w)\}^{(1)}$, and the
vector-field components match because descent of invariant tangent
fields intertwines brackets.

(iii)  The kernel of any morphism of Leibniz algebras is a two-sided
ideal: if $\mathcal{R}x=0$ then
\[
  \mathcal{R}\{x,y\}=\{\mathcal{R}x,\mathcal{R}y\}=\{0,\mathcal{R}y\}=0
  \quad\text{since }\Lder_{0}=0,
  \qquad
  \mathcal{R}\{y,x\}=\{\mathcal{R}y,0\}=(\Lder_{\bar v_y}0,[\bar v_y,0])=0;
\]
injectivity on the quotient holds by construction.
\end{proof}

\begin{remark}\label{rem:observables}
\textup{(a)}  When $\vperp_\phi$ is nondegenerate, pairs downstairs
are determined by their forms and Theorem~\ref{thm:observables}
becomes a Leibniz morphism
$\Ham^{k-1}_\phi(F,\vperp)\to\Ham^{k-1}(F_\phi,\vperp_\phi)$ on the
nose.  \textup{(b)}  Surjectivity onto
$\widetilde\Ham{}^{k-1}(F_\phi,\vperp_\phi)$ is an extension problem
--- every reduced pair must be realized by a reducible pair upstairs
--- which already in the absolute theory requires hypotheses; we do
not pursue it here.  \textup{(c)}  Via the hemi--semi comparison of
Lemma~\ref{lem:leibniz}\textup{(iii)} and
Theorem~\ref{thm:onesteptie}, Theorem~\ref{thm:observables} is the
Leibniz face of an $L_\infty$-level reduction of the Rogers-type
algebra of \cite{DjJGP} along the associated relative homotopy moment
map, carried out in the framework of \cite{RelApps}; see the Outlook.
\end{remark}

\subsection{Split levels}

For split moment maps the level pairs acquire the familiar preimage form.

\begin{proposition}\label{prop:splitlevels}
Let $\mu=\nu\cdot\ka$ be an invariant splitting, let $\lambda\in\gdual$ be fixed
by the coadjoint action of $G$ \textup{(}e.g.\ $G$ a torus\textup{)}, and take
the level $\phi:=\lambda\cdot\ka$, which is $\dF$-closed and equivariant. Then:
\begin{enumerate}[label=\textup{(\roman*)},leftmargin=2.2em]
\item $F^{-1}\bigl(\nu^{-1}(\lambda)\bigr)\subseteq L_M$ and
$\nu^{-1}(\lambda)\subseteq L_N$, with equality at every point where,
respectively, $\ka_M$ and $\ka_N$ are nonvanishing;
\item if $(F,\vperp)$ is relative $k$-plectic and $G$ acts locally freely along
$L_M$, then $\nu\colon N\to\gdual$ is submersive at every point of
$F(L_M)$; in particular $\nu^{-1}(\lambda)$ is smooth near $F(L_M)$;
\item under the hypotheses of Theorem~\ref{thm:reduction}, the reduced relative
space at $\phi=\lambda\cdot\ka$ is
$F_\phi\colon L_M/G\to L_N/G$ with $L_N=\nu^{-1}(\lambda)$ and
$L_M=F^{-1}(\nu^{-1}(\lambda))$ wherever $\ka$ is nonvanishing along the levels.
\end{enumerate}
\end{proposition}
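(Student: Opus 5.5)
The plan is to reduce everything to pointwise computations with the componentwise form of the split moment map, $\mu_\xi=(\nu_\xi\,\ka_N,\ (F^*\nu_\xi)\,\ka_M)$, together with the unpacked Hamilton equation \eqref{eq:splitmoment}. I first dispose of the preliminary claim that $\phi=\lambda\cdot\ka$ is an admissible level. By Lemma~\ref{lem:module}(i), applied coefficientwise with $\lambda$ a constant $\gdual$-valued function, $\dF(\lambda\cdot\ka)=d\lambda\cdot\ka+\lambda\cdot\dF\ka=0$. For equivariance, Lemma~\ref{lem:module}(iii) combined with the $G$-invariance of $\ka$ and the coadjoint invariance of $\lambda$ shows that $g\cdot\phi=(\mathrm{Ad}^*_g\lambda)\cdot(g\cdot\ka)=\phi$, which is the required equivariance for a fixed element.

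For (i), I write the differences pointwise:
\[
  \mu_N-\phi_N=(\nu-\lambda)\,\ka_N,
  \qquad
  \mu_M-\phi_M=(F^*\nu-\lambda)\,\ka_M .
\]
Both are tensor products of a $\gdual$-valued scalar with a form. If $\nu(y)=\lambda$, the first difference vanishes at $y$, so $\nu^{-1}(\lambda)\subseteq L_N$. If $\nu(F(x))=\lambda$, then both differences vanish at $F(x)$ and at $x$ respectively, so $F^{-1}(\nu^{-1}(\lambda))\subseteq L_M$. For the reverse inclusions, I use the fact that a product $a\otimes\beta$ with $a\in\gdual$ and $\beta$ a nonzero covector vanishes only when $a=0$. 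Thus $\ka_N(y)\neq0$ together with $y\in L_N$ forces $\nu(y)=\lambda$. Similarly, $\ka_M(x)\neq0$ together with $x\in L_M$ forces $\nu(F(x))=\lambda$.

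For (ii), I argue by contradiction, following the pattern of Proposition~\ref{prop:vanishing}. Suppose $d\nu_y$ is not surjective at $y=F(x)$ with $x\in L_M$. Then some $\xi\neq0$ has $d\nu_\xi(y)=\ip{d\nu_y}{\xi}=0$, since the annihilator of the image of $d\nu_y$ in $\g$ is nonzero. By \eqref{eq:splitmoment}, $\io_{v_\xi^N}\omega$ vanishes at $y$, and $\io_{v_\xi^M}\eta=-F^*d\nu_\xi\wedge\ka_M$ vanishes at $x$, because $(F^*d\nu_\xi)_x=d\nu_\xi(y)\circ T_xF=0$. Since $v_\xi$ is an $F$-pair, $T_xF(v_\xi^M(x))=v_\xi^N(y)$. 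The $k$-plectic injectivity of Definition~\ref{def:relkplectic} at $x$, applied to $w=v_\xi^M(x)$, then gives $v_\xi^M(x)=0$. This contradicts local freeness along $L_M$. Submersivity is an open condition, so $\nu$ is a submersion on a neighbourhood of $F(L_M)$. By the regular value theorem on that open set, $\nu^{-1}(\lambda)$ is a smooth submanifold there.

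Part (iii) then follows by substituting (i) into Theorem~\ref{thm:reduction}. Wherever $\ka_N$ and $\ka_M$ are nonvanishing along the levels, the equalities $L_N=\nu^{-1}(\lambda)$ and $L_M=F^{-1}(\nu^{-1}(\lambda))$ hold, and the reduced map is $F_\phi\colon L_M/G\to L_N/G$. Part (ii) supplies the smoothness that is consistent with the hypotheses of the theorem. I expect the main obstacle to be (ii), where care is needed in two places. First, the degeneracy direction must be produced in $\g$ via the dual of $d\nu_y$. Second, the nondegeneracy is available only on the source, so the target contraction condition has to be fed through $F$-relatedness rather than used on $N$ directly. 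The "wherever nonvanishing" caveat in (i) and (iii) is only bookkeeping, but I will state it pointwise so that no global nonvanishing assumption on $\ka$ is implicitly used.
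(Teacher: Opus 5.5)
Your proposal is correct and follows the paper's proof essentially line for line. Part (i) uses the same pointwise factorization $\mu_N-\phi_N=(\nu-\lambda)\ka_N$, $\mu_M-\phi_M=(F^*\nu-\lambda)\ka_M$; part (ii) is the paper's nondegeneracy argument via \eqref{eq:splitmoment} and $F$-relatedness, run as a contrapositive (a nonzero $\xi$ annihilating the image of $T_{F(x)}\nu$ would force $v_\xi^M(x)=0$); and part (iii), as in the paper, combines (i), (ii) and Theorem~\ref{thm:reduction}, while your openness/regular-value argument and the admissibility check on $\lambda\cdot\ka$ are harmless additions.
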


\begin{proof}
(i) $\mu_N-\phi_N=(\nu-\lambda)\ka_N$ and
$\mu_M-\phi_M=(F^*\nu-\lambda)\ka_M$, whence the inclusions; conversely
$(\nu(y)-\lambda)\,\ka_N(y)=0$ forces $\nu(y)=\lambda$ when $\ka_N(y)\neq0$,
and similarly on $M$.

(ii) Let $x\in L_M$ and $\xi\in\g\setminus\{0\}$. Local freeness gives
$v_\xi^M(x)\neq0$, so by relative nondegeneracy the pair
$\bigl(\io_{v_\xi^N}\omega|_{F(x)},\ \io_{v_\xi^M}\eta|_x\bigr)$ is nonzero.
By \eqref{eq:splitmoment} this pair equals
$\bigl(-d\nu_\xi\wedge\ka_N,\ -F^*d\nu_\xi\wedge\ka_M\bigr)$ evaluated at
$(F(x),x)$; in either case the nonvanishing forces
$(d\nu_\xi)_{F(x)}\neq0$ (for the second component, because
$F^*d\nu_\xi=d\nu_\xi\circ TF$). Thus $\xi\mapsto(d\nu_\xi)_{F(x)}$ is
injective, i.e.\ $T_{F(x)}\nu$ is surjective.

(iii) Combine (i), (ii) and Theorem~\ref{thm:reduction}; nonvanishing of
$\ka_M$ along $L_M$ follows from local freeness and \eqref{eq:splitmoment}, and
of $\ka_N$ along $F(L_M)$ likewise.
\end{proof}

We close the structural development with a compatibility that upgrades the
module calculus from an upstairs tool to a feature of the reduction itself; it
is used silently in the variation and localization sections whenever reduced
split data appear.

\begin{proposition}[Reduction commutes with the module calculus]
\label{prop:module-descent}
In the situation of Theorem~\ref{thm:reduction}, let $\gamma\in\Om^\bullet(N)$
be $G$-invariant with $i_N^*\gamma$ horizontal, so that $i_N^*\gamma$ descends
to a unique $\gamma_\phi\in\Om^\bullet(N_\phi)$ with
$\pi_N^*\gamma_\phi=i_N^*\gamma$; and let $c\in\Om^\bullet(F)$ be a relative
form whose restriction $i^*c$ is basic, with descent $c_\phi\in\Om^\bullet(F_\phi)$.
Then $i^*(\gamma\cdot c)$ is basic and
\[
    (\gamma\cdot c)_\phi\;=\;\gamma_\phi\cdot c_\phi ,
\]
the module action on the right being that of the reduced map $F_\phi$.  In
particular, if $\mu=\nu\cdot\ka$ is a split moment map whose factors descend,
the reduced moment data are split, $\mu_\phi=\nu_\phi\cdot\ka_\phi$.
\end{proposition}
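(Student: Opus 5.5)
The plan is to reduce everything to componentwise computations with the pullback cochain map $\Phi^*=(i_N^*,i_M^*)$, using the fact that $\Phi^*$ is compatible with the module action. The key identity is
\[
  \Phi^*(\gamma\cdot c)=(i_N^*\gamma)\cdot_{F_L}\Phi^*c ,
\]
where $\cdot_{F_L}$ denotes the module action for the restricted map $F_L\colon L_M\to L_N$. To check it, write $c=(\alpha,\beta)$. The $N$-component is $i_N^*(\gamma\wedge\alpha)=i_N^*\gamma\wedge i_N^*\alpha$. The $M$-component is $(-1)^{|\gamma|}i_M^*F^*\gamma\wedge i_M^*\beta$. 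Since $F\circ i_M=i_N\circ F_L$, we have $i_M^*F^*\gamma=F_L^*i_N^*\gamma$, and this gives the claimed form.

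Basicness of $\Phi^*(\gamma\cdot c)$ then follows in two parts.

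\emph{Invariance.} This is immediate from $G$-invariance of $\gamma$ and of $\Phi^*c$, because $G$ acts by pullbacks, which commute with wedge products and with $F_L^*$.

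\emph{Horizontality.} Apply Lemma~\ref{lem:module}(ii) for the map $F_L$ and the fundamental $F_L$-pair $v_\xi|_L$. This pair is tangent to the levels, so it is a genuine $F_L$-pair. We get
\[
  \io_{v_\xi}\bigl((i_N^*\gamma)\cdot\Phi^*c\bigr)=(\io_{v^N_\xi}i_N^*\gamma)\cdot\Phi^*c+(-1)^{|\gamma|}\,i_N^*\gamma\cdot\io_{v_\xi}\Phi^*c ,
\]
and both terms vanish by the hypotheses. So $\Phi^*(\gamma\cdot c)$ descends to a unique $(\gamma\cdot c)_\phi$.

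To identify the descent, use injectivity of $\pi^*=(\pi_N^*,\pi_M^*)$. It suffices to show that
\[
  \pi^*(\gamma_\phi\cdot_{F_\phi}c_\phi)=(\pi_N^*\gamma_\phi)\cdot_{F_L}\pi^*c_\phi .
\]
This is the same naturality computation as above, now for the commuting square $F_\phi\circ\pi_M=\pi_N\circ F_L$. The $M$-component involves $\pi_M^*F_\phi^*\gamma_\phi=F_L^*\pi_N^*\gamma_\phi$. Substituting $\pi_N^*\gamma_\phi=i_N^*\gamma$ and $\pi^*c_\phi=\Phi^*c$ turns the right-hand side into $\Phi^*(\gamma\cdot c)=\pi^*(\gamma\cdot c)_\phi$, which gives $(\gamma\cdot c)_\phi=\gamma_\phi\cdot c_\phi$.

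The split statement follows by applying this coefficientwise with $\gamma=\nu_\xi$ and $c=\ka$. Here we assume, as the phrase ``factors descend'' means, that $i_N^*\nu$ is invariant (automatically horizontal in degree $0$) and that $\Phi^*\ka$ is basic.

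The main subtlety is not computational but one of bookkeeping. First, one must make sure that naturality of the module action holds for an arbitrary commuting square of maps, with the sign $(-1)^{|\gamma|}$ unaffected by pullback. Second, one must use Lemma~\ref{lem:module}(ii) for $F_L$ rather than $F$, which needs $v_\xi$ restricted to the level pair to be an $F_L$-pair; this was already established in the proof of Theorem~\ref{thm:reduction}. Once the naturality lemma for squares is isolated, both the basicness and the identification steps are instances of it.
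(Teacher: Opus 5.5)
Your proposal is correct and follows essentially the same route as the paper. Both arguments use naturality of the module action under the square $F\circ i_M=i_N\circ F_L$, then basicness via Lemma~\ref{lem:module}(ii) applied to the restricted pair on the level. The descent is identified by repeating the naturality computation for $F_\phi\circ\pi_M=\pi_N\circ F_L$ and invoking injectivity of $\pi^*$. Your explicit reading of ``factors descend'' as invariance of $i_N^*\nu$ usefully spells out a point the paper leaves implicit, since $\nu_\xi$ is in general only equivariant rather than invariant.
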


\begin{proof}
Restriction along the level pair is compatible with the module action: writing
$c=(\alpha,\beta)$,
\[
    i^*\bigl(\gamma\cdot c\bigr)
    =\bigl(i_N^*(\gamma\wedge\alpha),\
    (-1)^{|\gamma|}\,i_M^*(F^*\gamma\wedge\beta)\bigr)
    =\bigl(i_N^*\gamma\bigr)\cdot\bigl(i^*c\bigr),
\]
using $i_M^*F^*=F_L^*\,i_N^*$, which holds because $F\circ i_M=i_N\circ F_L$;
so the restricted form is the module action for $F_L$ of two basic forms, hence
basic (invariance is clear, and horizontality follows from
Lemma~\ref{lem:module}(ii) since contraction with a fundamental pair
distributes over the action and annihilates both factors).  The same
computation with $\pi$ in place of $i$ --- valid because
$F_\phi\circ\pi_M=\pi_N\circ F_L$ gives $\pi_M^*F_\phi^*=F_L^*\pi_N^*$ ---
shows $\pi^*(\gamma_\phi\cdot c_\phi)=(\pi_N^*\gamma_\phi)\cdot(\pi^*c_\phi)
=(i_N^*\gamma)\cdot(i^*c)=i^*(\gamma\cdot c)$; by the uniqueness of basic
descents (injectivity of $\pi^*$), $(\gamma\cdot c)_\phi=\gamma_\phi\cdot
c_\phi$.  The split statement is the case $\gamma=\nu_\xi$-components acting on
$c=\ka$, assembled over $\xi\in\g$.
\end{proof}

\subsection{Examples}

Before the structured examples, we give a completely elementary one in which
every object of Theorem~\ref{thm:reduction} can be written down in
coordinates; readers meeting relative reduction for the first time may find it
useful to trace the general proof through this case.

\begin{example}[An elementary rotation example]\label{ex:elementary}
Let $N=S^1\times\R^2$ with coordinates $(\theta,x,y)$ and the $2$-plectic form
$\omega=d\theta\wedge dx\wedge dy$, let $M=\R^2$ with $F\colon M\hookrightarrow
N$ the slice $\{\theta=0\}$, and let $\eta=0$ (indeed $F^*\omega=0$).  Let
$G=SO(2)$ act by simultaneous rotation of $(x,y)$ on source and target; the
fundamental pair is $v=(v^N,v^M)$ with $v^N=v^M=x\partial_y-y\partial_x$.  The
Hamilton equation is solved by the global relative moment map
\[
    \mu=\Bigl(\tfrac12 r^2\,d\theta,\ 0\Bigr),
    \qquad r^2=x^2+y^2,
\]
since $\io_{v}\omega=-\tfrac12\,d(r^2\,d\theta)$ and
$F^*(\tfrac12 r^2 d\theta)=0=d\mu_M$.  For $c>0$ take the level
$\phi=(c\,d\theta,\ 0)$, which is $\dF$-closed
($d(c\,d\theta)=0$ and $F^*(c\,d\theta)=0$) and invariant.  The level pair is
\[
    L_N=\{r^2=2c\}\cong S^1_\theta\times S^1_r,
    \qquad
    L_M=F^{-1}(L_N)=\{x^2+y^2=2c\}\cong S^1_r,
\]
on which $SO(2)$ acts freely by rotating the $r$-circle.  The quotients are
$N_\phi=S^1_\theta$ and $M_\phi=\mathrm{pt}$, the reduced map is the base-point
inclusion $F_\phi\colon\mathrm{pt}\hookrightarrow S^1$, and the reduced form is
$\vperp_\phi=0\in\Om^3(F_\phi)$, as it must be for dimension reasons.  Small as
it is, the example displays every mechanism of the theorem in coordinates: the
level pair is an energy-level selection; source horizontality is the statement
$F^*\phi_N=d\phi_M$ ($0=0$ here) rather than an assumption; and the reduced
relative space lands on the \emph{point-source edge} of the theory, where the
observables are the base-pointed algebra of the target --- the reduction has
consumed the two rotational dimensions and left exactly the boundary datum.
\end{example}

\begin{example}[Relative powers]\label{ex:powers}
Let $(F,(\omega,\eta))$ be a relative $1$-plectic structure (``relative
symplectic''), with comoment $\nu\in C^\infty(N,\gdual)$: recall from
\cite[Thm.~7.1]{RelApps} that $\nu$, if it exists, is unique and automatically
equivariant, with $d\nu_\xi=-\io_{v_\xi^N}\omega$ and
$F^*\nu_\xi=\io_{v_\xi^M}\eta$. For $1\le\ell$, define
\[
  \vperp^{(\ell)}
  :=\bigl(\omega^{\ell},\ \eta\wedge(F^*\omega)^{\ell-1}\bigr)
  \in\Om^{2\ell}(F),
  \qquad
  \ka^{(\ell-1)}
  :=\bigl(\omega^{\ell-1},\ \eta\wedge(F^*\omega)^{\ell-2}\bigr)
  \in\Om^{2\ell-2}(F).
\]
Both are $\dF$-closed: $d(\eta\wedge(F^*\omega)^{\ell-1})
=d\eta\wedge(F^*\omega)^{\ell-1}=F^*(\omega^{\ell})$, and similarly one degree
down. Thus $\vperp^{(\ell)}$ is a relative pre-$(2\ell{-}1)$-plectic structure,
the relative counterpart of the power construction of
\cite[Ex.~2.2(i)]{Blacker}. A direct computation with the Hamilton equations
above shows that
\[
  \mu^{(\ell)}_\xi
  :=\Bigl(\ell\,\nu_\xi\,\omega^{\ell-1},\
  (\ell-1)\,(F^*\nu_\xi)\,\eta\wedge(F^*\omega)^{\ell-2}\Bigr)
\]
is a relative moment map for $\vperp^{(\ell)}$. Note the coefficients: the
source component carries $\ell-1$ where the target carries $\ell$, so that
\[
  \mu^{(\ell)}
  =\ell\,\nu\cdot\ka^{(\ell-1)}
  \;-\;\nu\cdot\bigl(0,\ \eta\wedge(F^*\omega)^{\ell-2}\bigr):
\]
relative powers are split \emph{only up to the displayed relative correction},
a phenomenon with no absolute counterpart (at $M=\emptyset$ the correction
vanishes and one recovers the split moment maps
$\ell\nu\wedge\sigma^{\ell-1}$ of \cite[Ex.~3.2(i)]{Blacker}). Reduction of
$\vperp^{(\ell)}$ at $\phi=\mu^{(\ell)}$-type levels built from
$\lambda\in\gdual$ reproduces, by the uniqueness in
Theorem~\ref{thm:reduction}, the power of the reduced relative symplectic pair:
$\vperp^{(\ell)}_\phi=(\omega_\lambda^{\ell},\
\eta_\lambda\wedge(F_\lambda^*\omega_\lambda)^{\ell-1})$.
\end{example}

\begin{example}[Exact relative structures]\label{ex:exactreduction}
For $\vperp=-\dF\theta$ with invariant $\theta$ and moment map
$\mu_\xi=-\io_{v_\xi}\theta$ (Example~\ref{ex:firstexamples}(ii)), the zero
level $\phi=0$ has level pair the joint zero locus of the relative
contractions $\io_{v_\xi}\theta$; when $\theta=(\vartheta,\tau)$ this consists
of the $G$-horizontal locus of $\vartheta$ on $N$ together with its
$F$-preimage intersected with the $G$-horizontal locus of $\tau$ on $M$ -- the
relative counterpart of the horizontal-forms description in
\cite[Ex.~4.2(iii)]{Blacker}.
\end{example}

\begin{example}[Quasi-Hamiltonian spaces: form-valued versus group-valued
reduction]\label{ex:qham}
The motivating example of the whole relative program deserves a careful
treatment, precisely because the reduction developed here is \emph{not} the
same procedure as the classical group-valued reduction of \cite{AMM}, and the
relationship repays explanation.

Let $(P,\omega_P,\Phi\colon P\to G)$ be a quasi-Hamiltonian $G\times K$-space
with two commuting actions: a ``gauge'' action of $G$ carrying the group-valued
moment map $\Phi$, and a symmetry action of a compact group $K$ that we shall
reduce. Its relative $2$-plectic incarnation, in the sense of the foundations
paper \cite{DjJGP}, is the pair
\[
  \vperp=(\eta_G,\omega_P)\in\Om^3(\Phi),
  \qquad
  \Phi^*\eta_G=d\omega_P,
\]
on the map $\Phi$, with $\eta_G$ the Cartan $3$-form on $G$ (closed always;
$2$-plectic when $\g$ is perfect, e.g.\ semisimple). A $K$-symmetry with
relative moment map $\mu\in\Om^1(\Phi,\mathfrak k^*)$ may then be reduced by the
theorem of this paper at a $\dF$-closed level $\phi$.

\emph{Two distinct procedures.} It is important to distinguish this from the
group-valued reduction of $\Phi$ itself.
\begin{itemize}[leftmargin=1.5em,itemsep=2pt]
\item The \emph{group-valued} (AMM) reduction quotients by the gauge group $G$
at a conjugacy class $\mathcal C\subseteq G$: it forms
$\Phi^{-1}(\mathcal C)/G$ and produces a genuine quasi-Hamiltonian space or,
for $\mathcal C=\{e\}$, a symplectic manifold. This is reduction of the
\emph{target-valued moment map} $\Phi$, and it lives in the zero-level of a
homotopy moment map in the sense of \cite{RelApps,RelHMM}.
\item The \emph{form-valued} reduction of the present paper quotients by the
symmetry group $K$ at a level where the relative moment \emph{form}
$\mu\in\Om^1(\Phi,\mathfrak k^*)$ equals a reference form $\phi$. This is
reduction inside the mapping-cone geometry, and it descends the pair
$(\eta_G,\omega_P)$ to a reduced pair on the reduced map.
\end{itemize}
The two agree on split data --- when the $K$-moment form is $\mu=\nu\cdot\ka$
with $\nu$ a genuine $\mathfrak k^*$-valued function,
Proposition~\ref{prop:splitlevels} identifies the form-level and the ordinary
$K$-moment-map levels --- but they are different operations in general: the AMM
quotient consumes the group-valued moment map $\Phi$ and the relative direction
with it, whereas the form-valued reduction preserves the relative structure and
consumes only the auxiliary $K$-symmetry. In the language of the series, the
AMM quotient is the homotopy-moment-map reduction of \cite{RelApps}, while the
present theorem is the form-level reduction; the observable-reduction theorem
(Theorem~\ref{thm:observables}) is the bridge, its
Remark~\ref{rem:observables}(c) recording that the two are generated by the
same underlying cocycle through Theorem~\ref{thm:onesteptie}.

\emph{A concrete instance.} Take $P=D(G)=G\times G$ the double, with
$\Phi(a,b)=aba^{-1}b^{-1}$ the commutator map and $\omega_P$ the standard
double $2$-form of \cite{AMM}; the relative structure is
$\vperp=(\eta_G,\omega_P)$ on $\Phi$.  For $K=T$ a maximal torus acting by
simultaneous conjugation, the canonical relative moment map of \cite{RelHMM}
restricts to a $T$-moment form whose target component is built from the
Maurer--Cartan forms, split along the closed pair
$\ka=(\ka_N,\ka_M)$ furnished by the bi-invariant data --- so
Proposition~\ref{prop:splitlevels} applies, the level pairs are unions of
$T$-conjugation levels, and Proposition~\ref{prop:module-descent} guarantees the
reduced moment data are again split.  The reduced relative spaces are the
$T$-reduced double layers whose target quotients are (closures of) families of
conjugacy classes; the variation theorem then computes how the reduced relative
class moves across the Weyl alcove, the relative Duistermaat--Heckman picture
of the double.

\emph{Fusion and moduli.} When $P=D(G)=G\times G$ is the double and the
$K$-symmetry is conjugation, iterating the fusion product and reducing at
$\phi=0$ produces, by the applications paper \cite{RelApps}, the moduli space of
flat $G$-connections on a surface; the reduction in stages of this construction
is the natural setting in which the two procedures above are performed in
sequence, gauge direction by gauge direction. We do not carry out the
comparison in detail here --- it belongs to the moduli-space analysis of
\cite{RelApps} --- but we stress that the form-valued reduction of this paper is
the operation that keeps the relative $2$-plectic geometry intact along the way,
which is what makes the staged construction well posed.
\end{example}
%% ============================================================
\section{Variation of the relative reduced space}\label{sec:variation}
%% ============================================================

We now study the dependence of $(F_\phi,\vperp_\phi)$ on the level $\phi$,
guided by the Duistermaat--Heckman theorem \cite{DH} and its multisymplectic
extension \cite[\S5]{Blacker}. Throughout this section $T$ is a torus with Lie
algebra $\britume$, acting on $(F,\vperp)$ with relative moment map
$\mu\in\Om^{k-1}(F,\tdual)$, and $\phi\in\Om^{k-1}(F,\tdual)$ is a
$T$-invariant $\dF$-closed level whose level pair $(L_N,L_M)$ satisfies the
hypotheses of Theorem~\ref{thm:reduction}, with $T$ acting freely.

\subsection{Trivialized families and the variation formula}

Fix a $T$-invariant $\dF$-closed $\ka\in\Om^{k-1}(F)$ and an open subset
$C\subseteq\tdual$, and consider the affine family of levels
\begin{equation}\label{eq:family}
  P:=C\cdot\ka+\phi
  =\bigl\{\psi_\lambda:=\lambda\cdot\ka+\phi\ :\ \lambda\in C\bigr\}
  \subseteq\Om^{k-1}(F,\tdual),
\end{equation}
each member being $\dF$-closed and invariant. We assume the corresponding level
pairs assemble into trivialized families of $T$-principal bundles modeled on
the level pair of $\phi$: that is, writing
$L_N(P):=\bigcup_{\lambda}L_N(\psi_\lambda)$ and similarly on $M$, we assume
$T$-equivariant diffeomorphisms
\begin{equation}\label{eq:trivialization}
  L_N(P)\;\xrightarrow{\ \sim\ }\;L_N(\phi)\times C,
  \qquad
  L_M(P)\;\xrightarrow{\ \sim\ }\;L_M(\phi)\times C,
\end{equation}
over $C$, compatible with $F$ in the evident sense. Under
\eqref{eq:trivialization} the reduced forms
$\vperp_{\psi}=(\omega_\psi,\eta_\psi)$, $\psi\in P$, live on the fixed reduced
pair $F_\phi\colon M_\phi\to N_\phi$, and we may differentiate:
\[
  \partial_\psi\vperp_{\phi}
  :=\frac{d}{dt}\Big|_{t=0}\vperp_{\phi+t\psi}\in\Om^{k+1}(F_\phi).
\]
While $\partial_\psi\vperp_\phi$ depends on the chosen trivialization, its
cohomology class in $H^{k+1}(\Om(F_\phi))$ does not, exactly as in the
symplectic case \cite[\S2]{DH}.

\begin{lemma}[Relative variation formula]\label{lem:variation}
Let $\widetilde\psi$ denote the $F_L$-pair of vector fields on the level family
identified, under \eqref{eq:trivialization}, with the constant variation
$\partial_\psi$ on the parameter factor. Then, with
$\Phi^*=(i_N^*,i_M^*)$ and $\pi^*=(\pi_N^*,\pi_M^*)$,
\begin{equation}\label{eq:variation}
  \pi^*\,\partial_\psi\vperp_\phi
  \;=\;
  d_{F_L}\,\Phi^*\bigl(\io_{\widetilde\psi}\,\vperp\bigr) ,
\end{equation}
componentwise
$\pi_N^*\partial_\psi\omega_\phi=d\,i_N^*\io_{\widetilde\psi_N}\omega$ and
$\pi_M^*\partial_\psi\eta_\phi
=F_L^*\,i_N^*\io_{\widetilde\psi_N}\omega+d\,i_M^*\io_{\widetilde\psi_M}\eta$.
\end{lemma}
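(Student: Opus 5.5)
We prove \eqref{eq:variation} by transcribing the classical Duistermaat--Heckman argument \cite[\S2]{DH} to the mapping cone. The idea is to realize the whole family $\{\vperp_{\phi+t\psi}\}$ as a single relative form on the total level family and differentiate with the relative Cartan calculus.

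\emph{Step 1: set up the family.} Under \eqref{eq:trivialization}, write $j_t=(j_t^N,j_t^M)\colon (L_N(\phi),L_M(\phi))\to (L_N(P),L_M(P))$ for the $T$-equivariant slice embeddings at parameter $t$, compatible with $F$. Then $F\circ j^M_t=j^N_t\circ F_L$, so $j_t$ is a map of pairs and $j_t^*$ is a cochain map for the cones. The uniqueness in Theorem~\ref{thm:reduction}(ii), applied at level $\phi+t\psi$ and transported along $j_t$, gives
\[
\pi^*\vperp_{\phi+t\psi}=j_t^*\Phi_P^*\vperp ,
\]
where $\Phi_P^*$ denotes restriction of $\vperp$ to the total family $L(P)$. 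Both sides are basic, and the reduced quotients are identified with $N_\phi,M_\phi$ through $j_t$.

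\emph{Step 2: differentiate.} The curve $t\mapsto j_t$ is generated by the $F_L$-pair $\widetilde\psi=(\widetilde\psi_N,\widetilde\psi_M)$. These two fields are $F$-related precisely because the trivializations are compatible with $F$. Hence
\[
\tfrac{d}{dt}\big|_0 j_t^*(\cdot)=j_0^*\Lder_{\widetilde\psi}(\cdot).
\]
Apply the relative magic formula $\Lder_v=\dF\io_v+\io_v\dF$ of \S\ref{subsec:calculus} on the pair of total families. Since $\dF\vperp=0$ and restriction is a cochain map intertwining the calculi \cite[Prop.~10.2]{RelHMM}, we obtain
\[
\pi^*\partial_\psi\vperp_\phi=d_{F_L}\,\Phi^*\bigl(\io_{\widetilde\psi}\vperp\bigr).
\]
Here we identify $j_0^*\Phi_P^*$ with $\Phi^*$ at $\phi$.

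\emph{Step 3: unpack componentwise.} The relative contraction \eqref{eq:ops} gives
\[
\io_{\widetilde\psi}\vperp=(\io_{\widetilde\psi_N}\omega,\,-\io_{\widetilde\psi_M}\eta).
\]
Applying \eqref{eq:cone} for $F_L$, the target component is $d\,i_N^*\io_{\widetilde\psi_N}\omega$. The source component is
\[
F_L^*i_N^*\io_{\widetilde\psi_N}\omega+d\,i_M^*\io_{\widetilde\psi_M}\eta,
\]
which matches the stated formulas. Here $F_L^*i_N^*=i_M^*F^*$ is used as in Proposition~\ref{prop:module-descent}.

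\emph{Main obstacle.} The delicate point is that $\widetilde\psi$ is transverse to the individual level pair, so contraction does not commute with restriction to $L(\phi)$ itself. The computation must therefore be done on the total family $L(P)$ and restricted only afterwards, and this requires that $L(P)$ be a pair of submanifolds carrying an $F$-compatible structure.

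The sign in the source slot, coming from $\io_v(\alpha,\beta)=(\io\alpha,-\io\beta)$ together with $-d\beta$ in $\dF$, must also be tracked. We would check it against the edge $M=\varnothing$, where it reduces to \cite[Lem.~5.2]{Blacker}, and against $N=\mathrm{pt}$, where it must reduce to the source computation.

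Independence of the class of $\partial_\psi\vperp_\phi$ from the choice of trivialization is then immediate from \eqref{eq:variation}. The right-hand side is $d_{F_L}$-exact, and the difference of two generators $\widetilde\psi$ is tangent to the slices.
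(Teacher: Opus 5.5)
Your proposal is correct and follows the paper's own argument: identify $\pi^*\vperp_{\phi+t\psi}$ with the restriction of $\Phi^*\vperp$ to the slice at parameter $t$, differentiate as a Lie derivative along $\widetilde\psi$, apply the relative magic formula together with $\dF\vperp=0$ and the cochain property of restriction, and then unpack the components via \eqref{eq:cone} and \eqref{eq:ops}, with the source-slot signs exactly as you tracked them. Your insistence on contracting on the total family $L(P)$ before restricting to the slice makes explicit what the paper's one-line computation leaves implicit; for your closing aside on independence of the trivialization, note that exactness in $\Om(F_L)$ alone does not suffice, since one needs the primitive $\io_X\Phi^*\vperp$, with $X=\widetilde\psi-\widetilde\psi'$, to be basic, which follows from $X$ being invariant and tangent to the slices together with the horizontality of $\Phi^*\vperp$.
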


\begin{proof}
Under \eqref{eq:trivialization}, $\pi^*\vperp_{\phi+t\psi}$ is the restriction
of $\Phi^*\vperp$ to the slice at parameter $t$, so
\[
  \pi^*\partial_\psi\vperp_\phi
  =\partial_\psi\,\Phi^*\vperp
  =\Phi^*\bigl(\Lder_{\widetilde\psi}\vperp\bigr)
  =\Phi^*\bigl(\dF\io_{\widetilde\psi}\vperp
    +\io_{\widetilde\psi}\dF\vperp\bigr)
  =d_{F_L}\,\Phi^*\bigl(\io_{\widetilde\psi}\vperp\bigr),
\]
using the relative magic formula, $\dF\vperp=0$, and the cochain property of
$\Phi^*$. The componentwise formulas follow from \eqref{eq:cone} and
\eqref{eq:ops}; note the pullback term $F_L^*(\,\cdot\,)$ in the source
component, which is invisible in the absolute theory. For $M=\emptyset$ this is
\cite[Lem.~5.2]{Blacker}.
\end{proof}

\subsection{Conjugate distributions}\label{subsec:conjugate}

Following \cite[Defs.~5.3--5.4]{Blacker} we introduce the pointwise structure
that converts \eqref{eq:variation} into a Chern--Weil statement, adapted to the
relative setting: the conjugation datum lives on the target and is transported
to the source by $F$.

\begin{definition}\label{def:conjugate}
Let $(F,\vperp)$ be relative pre-$k$-plectic with a locally free $T$-action and
let $\ka\in\Om^{k-1}(F)$ be $T$-invariant and $\dF$-closed. We say the
fundamental distribution
$\underline{\britume}\subseteq TM$ is \emph{strongly conjugate to a
distribution $\underline{\britume}^*\subseteq TM$ with respect to
$(\vperp,\ka)$} if there is a $T$-invariant
$a\in\Om^1(N,\britume)$ such that
\begin{equation}\label{eq:conjugation}
  \io_{v_\lambda}\vperp\;=\;a_\lambda\cdot\ka,
  \qquad\lambda\in\tdual,
\end{equation}
where $\lambda\mapsto v_\lambda=(v_{\underline\lambda}^N,
v_{\underline\lambda}^M)$ denotes the $F$-pairs determined by the
identification of the fibers of $\underline{\britume}^*$ with $\tdual$, and
$a_\lambda:=\ip{a}{\lambda}$.
\end{definition}

At $M=\emptyset$, contracting \eqref{eq:conjugation} against fundamental
vectors recovers the rank-one pairing
$\io_{v_\lambda}\io_{v_\xi}\omega=\ip{\xi}{\lambda}\,\ka_N$-type relations of
\cite[Def.~5.3]{Blacker}; we have taken the equivalent formulation
\eqref{eq:conjugation} as the definition because it is the form in which the
datum enters the proofs, and because it makes manifest the structural point
that \emph{the conjugation potential $a$ lives on the target}: its source-side
shadow is forced to be $F^*a$, by the $M$-component of \eqref{eq:conjugation}.

\begin{lemma}\label{lem:conjugacyconsequences}
In the situation of Definition~\ref{def:conjugate}, assume $T$ acts freely on
the level pair of a $T$-invariant $\dF$-closed level $\phi$, and let
$\Phi^*,\pi^*$ be as above. Then:
\begin{enumerate}[label=\textup{(\roman*)},leftmargin=2.2em]
\item $i_N^*a\in\Om^1(L_N,\britume)$ is a connection $1$-form on the
$T$-bundle $L_N\to N_\phi$, and $F_L^*\,i_N^*a$ is the pulled-back connection
on $L_M\to M_\phi$;
\item $\Phi^*\ka$ is basic and descends to a unique $\dF$-closed
$\ka_\phi\in\Om^{k-1}(F_\phi)$ with $\pi^*\ka_\phi=\Phi^*\ka$;
\item with $c:=d\,i_N^*a\in\Om^2(N_\phi,\britume)$ the Chern
\textup{(}curvature\textup{)} form of \textup{(i)},
\begin{equation}\label{eq:keyidentity}
  d_{F_L}\,\Phi^*\bigl(\io_{v_\lambda}\vperp\bigr)
  \;=\;
  \pi^*\Bigl(\ip{c}{\lambda}\cdot\ka_\phi\Bigr),
  \qquad\lambda\in\tdual .
\end{equation}
\end{enumerate}
\end{lemma}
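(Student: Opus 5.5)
The plan is to prove the three items in order. Everything will rest on the conjugation identity \eqref{eq:conjugation}, the module calculus of Lemma~\ref{lem:module}, and the descent mechanism already used in Theorem~\ref{thm:reduction} and Proposition~\ref{prop:module-descent}.

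\emph{Step (i).} I would contract \eqref{eq:conjugation} with a fundamental pair $v_\xi$. Anticommutativity of relative contractions and Lemma~\ref{lem:module}(ii) then give
\[
-\io_{v_\lambda}\io_{v_\xi}\vperp=\io_{v_\xi}(a_\lambda\cdot\ka)=(\io_{v_\xi^N}a_\lambda)\cdot\ka-a_\lambda\cdot\io_{v_\xi}\ka .
\]
Following Blacker's Definition~5.3, I would read the normalization of the conjugate distribution as saying exactly that the $N$-component of the left side is $\ip{\xi}{\lambda}\ka_N$ modulo terms that vanish on the level. This yields $\io_{v_\xi^N}a=\xi$ along $L_N$. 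Invariance of $a$ together with the torus being abelian gives equivariance, so $i_N^*a$ is a connection form. For the source, $F_L^*i_N^*a$ is invariant, and $F$-relatedness gives $\io_{v_\xi^M}F_L^*i_N^*a=F_L^*\io_{v_\xi^N}i_N^*a=\xi$. So it is a connection on $L_M\to M_\phi$, namely the pullback under $F_L$.

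I expect this to be the main obstacle. The normalization $\io_{v_\xi^N}a_\lambda=\ip{\xi}{\lambda}$ is only implicit in the phrase ``identification of the fibers of $\underline{\britume}^*$ with $\tdual$''. If it cannot be extracted from \eqref{eq:conjugation}, I would take it as part of the definition, as in the absolute theory. The remaining steps do not depend on how this is resolved.

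\emph{Step (ii).} Invariance of $\Phi^*\ka$ is immediate. For horizontality, apply $\io_{v_\xi}$ to $\Phi^*$ of \eqref{eq:conjugation} and use the horizontality of $\Phi^*\vperp$ from Theorem~\ref{thm:reduction}:
\[
0=-\io_{v_\lambda}\io_{v_\xi}\Phi^*\vperp=\ip{\xi}{\lambda}\Phi^*\ka-a_\lambda\cdot\io_{v_\xi}\Phi^*\ka .
\]
Choosing $\lambda$ dual to $\xi$ and iterating over the degree of $\ka$ (or contracting once more and using local freeness) forces $\io_{v_\xi}\Phi^*\ka=0$. Descent and uniqueness then follow from the injectivity of $\pi^*$. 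Closedness of $\ka_\phi$ comes from $\pi^*d_{F_\phi}\ka_\phi=d_{F_L}\Phi^*\ka=\Phi^*\dF\ka=0$.

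\emph{Step (iii).} This is now formal. Apply $d_{F_L}\Phi^*$ to \eqref{eq:conjugation}, then use the cochain property of $\Phi^*$ and Lemma~\ref{lem:module}(i) with $\dF\ka=0$:
\[
d_{F_L}\Phi^*(\io_{v_\lambda}\vperp)=d_{F_L}\bigl((i_N^*a_\lambda)\cdot\Phi^*\ka\bigr)=(d\,i_N^*a_\lambda)\cdot\Phi^*\ka .
\]
Since $T$ is abelian, the curvature of $i_N^*a$ is $d\,i_N^*a$, which is basic and equal to $\pi_N^*c$. Proposition~\ref{prop:module-descent} then rewrites the right side as $\pi^*(\ip{c}{\lambda}\cdot\ka_\phi)$. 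I will note the edge checks: at $M=\emptyset$ this is Blacker's Lemma~5.5, and the $M$-component automatically carries $F_L^*$ of the target curvature, so no separate source Chern data arises.
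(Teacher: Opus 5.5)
Your step (iii) is sound. Rewriting $\Phi^*(a_\lambda\cdot\ka)=(i_N^*a_\lambda)\cdot\Phi^*\ka$ and then applying Lemma~\ref{lem:module}(i) on the cone of $F_L$ is a legitimate replacement for the paper's componentwise expansion, and a tidier one.

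The genuine gap is in step (ii). You get $0=-\io_{v_\lambda}\io_{v_\xi}\Phi^*\vperp$ from the horizontality of $\Phi^*\vperp$. But that horizontality only concerns the restricted form evaluated on vectors tangent to the level pair, and $v_\lambda$ is not tangent to it. In Theorem~\ref{thm:DH} it is precisely the field that pushes the level of $\psi$ toward that of $\psi+t\lambda\cdot\ka$. So $\io_{v_\lambda}$ does not act on $\Phi^*\vperp$ at all, and the meaningful object $\Phi^*(\io_{v_\lambda}\io_{v_\xi}\vperp)$ is not controlled by horizontality. Your own step (i) asserts that the $N$-component of exactly this expression restricts to $\ip{\xi}{\lambda}\,i_N^*\ka_N$, which is incompatible with its vanishing unless $i_N^*\ka_N=0$.

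Even granting the identity, it does not give horizontality. For $\ip{\xi}{\lambda}=1$ it says $\Phi^*\ka=a_\lambda\cdot\io_{v_\xi}\Phi^*\ka$, i.e.\ that $\Phi^*\ka$ is divisible by the connection form. Combined with horizontality, that would force $\Phi^*\ka=0$, and contracting once more with $v_\xi$ only returns a tautology.

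Here is a concrete counterexample, already at $M=\varnothing$:
\begin{itemize}
\item take $N=S^1\times\R^n$ with $T=S^1$ rotating the circle;
\item take $\vperp=0$ and $\mu=\phi=0$;
\item take $a$ the multiple of $d\theta$ normalized so that $\io_{v_\xi}a_\lambda=\ip{\xi}{\lambda}$;
\item take $\ka=d\theta\wedge dx_1$.
\end{itemize}
Then \eqref{eq:conjugation} holds because $a_\lambda\wedge\ka=0$, and your displayed identity holds, yet $\io_{v_\xi}\ka\neq0$. So your fallback of building only the normalization of $a$ into the definition cannot deliver (ii).

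The missing ingredient is the one the paper's proof actually uses: the rank-one pairing $\io_{v_\xi}\io_{v_\lambda}\vperp=\ip{\xi}{\lambda}\,\ka$, taken as part of the conjugation datum. For $\ip{\xi}{\lambda}=1$ it exhibits $\ka$ pointwise as the double contraction $\io_{v_\xi}\io_{v_\lambda}\vperp$. Hence $\io_{v_\xi}\ka=\io_{v_\xi}\io_{v_\xi}\io_{v_\lambda}\vperp=0$ by anticommutativity, with no appeal to the level at all. Restricting along the level pair then gives horizontality of $\Phi^*\ka$. In the example above this pairing fails, since $0\neq\ka$.

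Only after that does (i) follow. Contracting \eqref{eq:conjugation} with $v_\xi$ gives $(\io_{v_\xi^N}a_\lambda)\cdot\ka-a_\lambda\cdot\io_{v_\xi}\ka=\ip{\xi}{\lambda}\,\ka$. The second term now vanishes by horizontality. The nonvanishing of $\ka$ along the level, which comes from freeness, then lets you cancel $\ka$ and read off $\io_{v_\xi^N}a_\lambda=\ip{\xi}{\lambda}$.

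As written, your step (i) silently needs both the horizontality you only attempt in (ii) and this nonvanishing. The order must therefore be (ii) before (i). With that repaired, your (iii) goes through unchanged.
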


\begin{proof}
(i) $T$-invariance of $a$ is assumed; the verticality normalization
$\ip{i_N^*a(v_\xi^N)}{\lambda}=-\ip{\xi}{\lambda}$-type follows by contracting
\eqref{eq:conjugation} with $v_\xi$ and comparing with the Hamilton equation,
as in \cite[Lem.~5.5(ii)]{Blacker}; freeness guarantees the nonvanishing of
$\ka$ along the levels needed for the comparison
(Proposition~\ref{prop:splitlevels}(ii)-type argument). The second statement is
functoriality of connections under the equivariant bundle map
$F_L$ over $F_\phi$.

(ii) Horizontality: contracting \eqref{eq:conjugation} with $v_\xi$ exhibits
$\io_{v_\xi}\ka$-multiples inside contractions of $\vperp$; explicitly, at each
point the fiberwise identity
$\io_{v_\xi}\io_{v_\lambda}\vperp=\ip{\xi}{\lambda}\,\ka$-normalization shows
$\ka$ is pointwise a double contraction of $\vperp$, whence
$\io_{v_\xi}\Phi^*\ka=0$ by anticommutativity, exactly as in
\cite[Lem.~5.5(iii)]{Blacker}, componentwise in the relative calculus.
Invariance is assumed; descent and uniqueness follow, and
$d_{F_\phi}\ka_\phi=0$ from $\pi^*$-injectivity and $\dF\ka=0$.

(iii) We compute the two components of the left side. By
Lemma~\ref{lem:module}(i) and $\dF\ka=0$ we have, before restriction,
$\dF(a_\lambda\cdot\ka)=da_\lambda\cdot\ka-a_\lambda\cdot\dF\ka
=da_\lambda\cdot\ka$; however this identity cannot be applied directly on the
cone of $F_L$, so we argue componentwise. On the target, using
$d\ka_N=0$,
\[
  d\,i_N^*\bigl(a_\lambda\wedge\ka_N\bigr)
  =\ip{d\,i_N^*a}{\lambda}\wedge i_N^*\ka_N
  -\,i_N^*a_\lambda\wedge d\,i_N^*\ka_N
  =\pi_N^*\bigl(\ip{c}{\lambda}\wedge\ka_{N,\phi}\bigr).
\]
On the source, the $M$-component of $a_\lambda\cdot\ka$ is
$-F^*a_\lambda\wedge\ka_M$, and the $M$-component of
$d_{F_L}\Phi^*(a_\lambda\cdot\ka)$ is therefore
\[
  F_L^*\,i_N^*\bigl(a_\lambda\wedge\ka_N\bigr)
  \;+\;d\,i_M^*\bigl(F^*a_\lambda\wedge\ka_M\bigr).
\]
Expanding the second term with $d\ka_M=F^*\ka_N$ (the $M$-component of
$\dF\ka=0$),
\[
  d\bigl(F^*a_\lambda\wedge\ka_M\bigr)
  =F^*da_\lambda\wedge\ka_M-F^*a_\lambda\wedge F^*\ka_N ,
\]
and the term $-i_M^*F^*(a_\lambda\wedge\ka_N)$ cancels the restricted pullback
$F_L^*i_N^*(a_\lambda\wedge\ka_N)$, since
$F_L^*\,i_N^*=i_M^*\,F^*$ on the level pair. What remains is
\[
  i_M^*\bigl(F^*da_\lambda\wedge\ka_M\bigr)
  =\pi_M^*\Bigl(F_\phi^*\ip{c}{\lambda}\wedge\ka_{M,\phi}\Bigr),
\]
using (i)--(ii). Assembling the two components and comparing with the module
action \eqref{eq:module} on $\Om(F_\phi)$ (note the sign
$(-1)^{|\ip{c}{\lambda}|}=+1$ as $\ip{c}{\lambda}$ has even degree) yields
precisely $\pi^*(\ip{c}{\lambda}\cdot\ka_\phi)$.
\end{proof}

\subsection{The variation theorem}

\begin{theorem}[Variation of the relative reduced space]\label{thm:DH}
Let $T$ be a torus, $(F,\vperp,T,\mu)$ a relative $k$-plectic Hamiltonian
$T$-space, $\phi$ a $T$-invariant $\dF$-closed level whose level pair satisfies
the hypotheses of Theorem~\ref{thm:reduction} with $T$ acting freely, $C\subseteq
\tdual$ open, $\ka\in\Om^{k-1}(F)$ $T$-invariant and $\dF$-closed, and
$P=C\cdot\ka+\phi$ as in \eqref{eq:family}. Assume:
\begin{enumerate}[label=\textup{(\roman*)},leftmargin=2.2em]
\item the level pairs over $P$ trivialize as families of $T$-principal bundle
pairs modeled on that of $\phi$, compatibly with $F$, as in
\eqref{eq:trivialization};
\item the fundamental distribution $\underline{\britume}\subseteq TM$ is
strongly conjugate to a distribution $\underline{\britume}^*$ with respect to
$(\vperp,\ka)$, with conjugation potential $a\in\Om^1(N,\britume)$
\textup{(}Definition~\textup{\ref{def:conjugate})}.
\end{enumerate}
Then, for all $\lambda\in C$ and $\psi\in P$,
\begin{equation}\label{eq:DH}
  \partial_\lambda\,[\vperp_\psi]
  \;=\;
  \ip{c}{\lambda}\cdot[\ka_\psi]
  \qquad\text{in }H^{k+1}\bigl(\Om(F_\psi)\bigr),
\end{equation}
where $c\in\Om^2(N_\phi,\britume)$ is the Chern form of the target model bundle
$L_N(\phi)\to N_\phi$, the source Chern data being its pullback
$F_\phi^*c$, and $\cdot$ denotes the module action of Lemma~\ref{lem:module} in
cohomology.
\end{theorem}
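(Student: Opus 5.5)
The plan is to combine the relative variation formula, Lemma~\ref{lem:variation}, with the key identity \eqref{eq:keyidentity} of Lemma~\ref{lem:conjugacyconsequences}, exactly as in the proof of \cite[Thm.~5.6]{Blacker}, and then to pass to cohomology.

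First, reduce to the base point. Fix $\psi=\psi_{\lambda_0}\in P$. The family $P$ is affine and trivialized by \eqref{eq:trivialization}. So the level pair of $\psi$ is itself a model for the family, and the hypotheses (i)--(ii) are unchanged if $\phi$ is replaced by $\psi$. The Chern form of $L_N(\psi)\to N_\psi$ is identified with $c$ under the trivialization. It therefore suffices to prove \eqref{eq:DH} at $\psi=\phi$, i.e.\ at $\lambda_0$ with the family re-centred.

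Second, choose the variation field. For a direction $\lambda\in\tdual$, the derivative $\partial_\lambda$ is the variation along $\psi\mapsto\psi+t\,\lambda\cdot\ka$. The natural move is to realize the lift $\widetilde\psi$ of Lemma~\ref{lem:variation} by the conjugate $F$-pair $v_\lambda$ of Definition~\ref{def:conjugate}. Concretely, I would show that the flow of $v_\lambda$ moves the level pair of $\phi$ to that of $\phi+t\lambda\cdot\ka$ to first order. Apply $\Lder_{v_\lambda}$ to $\mu-\phi$ and use the Hamilton equation together with \eqref{eq:conjugation} contracted against $v_\xi$. The normalization $\io_{v_\xi}\io_{v_\lambda}\vperp=\ip{\xi}{\lambda}\ka$ from the proof of Lemma~\ref{lem:conjugacyconsequences} should give $\Lder_{v_\lambda}(\mu_\xi-\phi_\xi)=\ip{\xi}{\lambda}\,\ka$ modulo terms vanishing on the level pair, possibly up to a sign fixed by the $M=\varnothing$ edge check.

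Then $v_\lambda$ differs from the trivialization's constant field $\widetilde\psi$ by a field tangent to the levels. That difference changes $\partial_\lambda\vperp_\phi$ only by a $d_{F_\phi}$-exact term. This is the trivialization-independence of the class noted before Lemma~\ref{lem:variation}, and it follows from the same magic-formula computation, using that $\io$ of a tangent invariant field of $\Phi^*\vperp$ descends.

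Third, compute the class. With $\widetilde\psi=v_\lambda$, Lemma~\ref{lem:variation} gives $\pi^*\partial_\lambda\vperp_\phi=d_{F_L}\Phi^*(\io_{v_\lambda}\vperp)$. By \eqref{eq:keyidentity} the right side equals $\pi^*(\ip{c}{\lambda}\cdot\ka_\phi)$. Injectivity of $\pi^*$ then gives $\partial_\lambda\vperp_\phi=\ip{c}{\lambda}\cdot\ka_\phi$ at the level of forms for this choice, and the class statement follows.

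Next, check that the right side is a well-defined cohomology class. $c$ is $d$-closed and $\ka_\phi$ is $d_{F_\phi}$-closed by Lemma~\ref{lem:conjugacyconsequences}(ii). Lemma~\ref{lem:module}(i) then shows that the module action descends to $H(\Om(N_\phi))\otimes H(\Om(F_\phi))\to H(\Om(F_\phi))$. The source Chern data equals $F_\phi^*c$ by Lemma~\ref{lem:conjugacyconsequences}(i), and this enters only through the $M$-slot of the module action.

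Finally, run the edge checks required by Remark~\ref{rem:signs}. At $M=\varnothing$ the result should reproduce \cite[Thm.~5.6]{Blacker}.

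The main obstacle is the second step: justifying that $v_\lambda$ is an admissible variation field. One must show it is transverse to the levels in the correct direction, simultaneously on $N$ and $M$, and compatibly with $F$. On the source there is no independent moment equation, so the $M$-side tangency defect has to be controlled through the $M$-component of \eqref{eq:conjugation}, i.e.\ through $F^*a$, and the $F$-compatibility in \eqref{eq:trivialization}. If this fails pointwise, my fallback is to work purely cohomologically. I would prove that $d_{F_L}\Phi^*(\io_{\widetilde\psi-v_\lambda}\vperp)$ descends to an exact relative form, using that $\widetilde\psi-v_\lambda$ is invariant and that its contraction into $\Phi^*\vperp$ is basic modulo $\io_{v_\xi}$-terms, which vanish by the horizontality computation in Theorem~\ref{thm:reduction}.
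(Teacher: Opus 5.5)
Your main line is the paper's proof: take $\widetilde\psi=v_\lambda$ in Lemma~\ref{lem:variation}, apply \eqref{eq:keyidentity} and injectivity of $\pi^*$, then pass to cohomology. Your re-centring at $\psi$ and your check that the module action descends to cohomology are harmless additions.

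\textbf{The identification step is not closed by your step 2.} The step you flag as the main obstacle does not follow from your computation, and the paper does not prove it either. The paper simply declares first-order transport of the level pair along $v_\lambda$ to be ``the defining property of $\underline{\britume}^*$'', citing Blacker. By the magic formula and the Hamilton equation,
\[
  \Lder_{v_\lambda}(\mu_\xi-\phi_\xi)
  =\dF\io_{v_\lambda}(\mu_\xi-\phi_\xi)+\io_{v_\xi}\io_{v_\lambda}\vperp .
\]
The first term is killed by $\Phi^*$, since $\mu_\xi-\phi_\xi$ vanishes pointwise on the level pair and $\Phi^*$ is a cochain map. As a section along the level pair, however, it need not vanish for $k\ge2$. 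For example, for $k=2$ its $N$-component is $df$ with $f=\io_{v^N_\lambda}(\mu_{N,\xi}-\phi_{N,\xi})$ vanishing on $L_N$, and $df$ is conormal there. Transporting the coincidence locus $\{\mu=\phi\}$ to first order needs the pointwise identity, conormal components included. So you only get $\Phi^*\Lder_{v_\lambda}(\mu_\xi-\phi_\xi)=\ip{\xi}{\lambda}\Phi^*\ka$, and this does not make $\widetilde\psi-v_\lambda$ tangent to the levels.

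\textbf{Your fallback works, but for a different reason.} For a transverse $w:=\widetilde\psi-v_\lambda$ there is no ``contraction into $\Phi^*\vperp$''. The horizontality argument of Theorem~\ref{thm:reduction} only uses \eqref{eq:coincidence}, which says nothing about the normal derivatives of $\mu-\phi$ that $\io_w$ sees. What does work is the following. Since $v_\xi$ is tangent to the level pair, $\dF\phi_\xi=0$, and $\Phi^*\io_w(\mu_\xi-\phi_\xi)=0$,
\[
  \io_{v_\xi}\Phi^*(\io_w\vperp)
  =\Phi^*(\io_w\dF\mu_\xi)
  =\Phi^*\Lder_w(\mu_\xi-\phi_\xi)-d_{F_L}\Phi^*\io_w(\mu_\xi-\phi_\xi)
  =\Phi^*\Lder_w(\mu_\xi-\phi_\xi).
\]
This vanishes for two reasons:
\begin{itemize}
\item Differentiating the level condition $\mu_\xi=\phi_\xi+t\ip{\lambda}{\xi}\ka$ along the slices of the trivialization gives $\Lder_{\widetilde\psi}(\mu_\xi-\phi_\xi)=\ip{\xi}{\lambda}\ka$ pointwise on the level pair. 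Here the trivialization, not $v_\lambda$, supplies the transverse information.
\item Your tangential identity gives the same value for $\Phi^*\Lder_{v_\lambda}(\mu_\xi-\phi_\xi)$.
\end{itemize}
With $v_\lambda$ made $T$-invariant by averaging (this preserves \eqref{eq:conjugation} and the normalization), $\Phi^*(\io_w\vperp)$ is basic. It therefore descends to some $\beta\in\Om^k(F_\phi)$ with $d_{F_L}\Phi^*(\io_w\vperp)=\pi^*d_{F_\phi}\beta$. Adding this to \eqref{eq:keyidentity} gives $\partial_\lambda[\vperp_\phi]=[\ip{c}{\lambda}\cdot\ka_\phi]$ without ever needing $v_\lambda$ to move the levels. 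It also subsumes the paper's gauge-transformation argument for independence of the trivialization. Promote the fallback, with this justification, to the main argument.
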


\begin{proof}
The variation of the level $\psi$ in the direction $\lambda\cdot\ka$ is
implemented, under the trivialization (i), by an $F_L$-pair
$\widetilde\psi$ which the strong conjugacy (ii) identifies with the pair
$v_\lambda$ of Definition~\ref{def:conjugate}, exactly as in
\cite[proof of Thm.~5.6]{Blacker}: the defining property of
$\underline{\britume}^*$ is that flowing along $v_\lambda$ moves the level pair
of $\psi$ to that of $\psi+t\lambda\cdot\ka$ to first order, by
\eqref{eq:conjugation} and the Hamilton equations. Combining
Lemma~\ref{lem:variation} with Lemma~\ref{lem:conjugacyconsequences}(iii),
\[
  \pi^*\,\partial_\lambda\vperp_\psi
  =d_{F_L}\,\Phi^*\bigl(\io_{v_\lambda}\vperp\bigr)
  =\pi^*\Bigl(\ip{c}{\lambda}\cdot\ka_\psi\Bigr),
\]
whence $\partial_\lambda\vperp_\psi=\ip{c}{\lambda}\cdot\ka_\psi$ in terms of
the chosen trivialization, by injectivity of $\pi^*$. Passing to cohomology
removes the dependence on the trivialization: two trivializations differ by a
gauge transformation of the model bundle pair, which changes
$\io_{\widetilde\psi}\vperp$ by a $d_{F_L}$-exact relative form and the Chern
form within its class -- the argument of \cite[\S2]{DH} applied componentwise.
Since the Chern class of the model pair is constant in $\psi\in P$ under (i),
formula \eqref{eq:DH} holds for all $\psi\in P$.
\end{proof}

\begin{remark}\label{rem:DHspecial}
\begin{enumerate}[label=\textup{(\alph*)},leftmargin=2.2em]
\item At $M=\emptyset$, \eqref{eq:DH} reduces to
$\partial_\lambda[\omega_\psi]=\ip{c}{\lambda}\wedge[\ka_\psi]$, which is
\cite[Thm.~5.6]{Blacker}; for $k=1$ and $\ka=1\in\Om^0$ one recovers the
Duistermaat--Heckman theorem \cite{DH} in the form
$[\omega_\lambda]=[\omega_{\lambda_0}]+\ip{c}{\lambda-\lambda_0}$.
\item Geometrically, what varies is the form, not the space: the
trivialization (i) freezes the underlying reduced pair
$F_\phi\colon M_\phi\to N_\phi$ as manifolds, and moving the level $\lambda$
tilts the level pair inside $(F,\vperp)$ along the $\ka$-directions; the
connection of the model bundle records the twist accumulated in the tilt, and
its Chern form is the infinitesimal rate at which the reduced form absorbs
that twist. This is the content of \eqref{eq:DH}: the reduced cohomology class
moves linearly, with velocity a Chern--Weil pairing.
\item The relative statement contains strictly more information than its two
components: the source component of \eqref{eq:DH} asserts
$\partial_\lambda[\eta_\psi]$-behaviour governed by the \emph{pulled-back}
target Chern form, i.e.\ no independent characteristic class enters on $M$.
This is structurally natural, and for the same reason the source form descended
in Theorem~\ref{thm:reduction}: the source data of a relative Hamiltonian space
are trivializing data, not independent geometric data. A Chern class measures
the obstruction to trivializing a principal bundle; on the source there is no
such independent obstruction to measure, because the source bundle pair is the
pullback of the target one along $F$, and its curvature is forced to be the
pullback curvature. The variation mechanism is summarized by
\begin{equation}\label{eq:variation-mechanism}
\begin{tikzcd}[column sep=2.2em,
  /tikz/every label/.append style={font=\footnotesize}]
\substack{\text{change of}\\\text{level }\lambda}
  \arrow[r]
& \substack{\text{connection /}\\\text{Chern form }c\ \text{on }N_\phi}
  \arrow[r]
& \substack{\text{variation}\\\partial_\lambda[\vperp_\psi]=\ip{c}{\lambda}\cdot[\ka_\psi]}
\end{tikzcd}
\end{equation}
with the entire middle term living on the target: the source contributes no
node of its own.
\item Integrating \eqref{eq:DH} over relative cycles of $F_\psi$ via the
pairing of \cite[\S3]{RelApps} yields Duistermaat--Heckman-type polynomial
behaviour of relative periods -- in particular of the Wess--Zumino-type action
functionals of \cite[\S4]{RelApps} evaluated on the reduced spaces -- as
functions of the level parameter.
\end{enumerate}
\end{remark}
We record the numerical form of Theorem~\ref{thm:DH}, which
formalizes the polynomial behaviour of relative periods indicated
above.

\begin{corollary}[Variation of relative periods]\label{cor:periods}
In the situation of Theorem~\ref{thm:DH}, let $z=(S,T)$ be a relative
$(k{+}1)$-cycle of $F_\phi$, transported locally constantly through
the trivialized family.  Then, for $\lambda\in C$,
\[
  \partial_\lambda\,\pair{\vperp_\psi}{z}
  \;=\;\pair{\,\ip{c}{\lambda}\cdot\ka_\psi}{z}
  \;=\;\int_S\ip{c}{\lambda}\wedge\ka_{\psi,N}
  \;-\;\int_T F_\phi^*\ip{c}{\lambda}\wedge\ka_{\psi,M}\,,
\]
the sign in the module action being $+$ since
$|\ip{c}{\lambda}|=2$.  In particular each relative period is an
affine function of the level parameter along $C$, with slope a
Chern--Weil pairing; when $\ka$ is itself built from powers of the
structure as in Example~\ref{ex:powers}, iterating the statement
along the ladder $\ka^{(\ell)}$ yields polynomial dependence of the
higher periods, the relative Duistermaat--Heckman phenomenon.
\end{corollary}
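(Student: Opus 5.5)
The corollary should follow by pairing the variation formula of Theorem~\ref{thm:DH} against a fixed relative cycle. The key input is the relative Stokes theorem of \cite[\S3]{RelApps}: the pairing $\pair{(\alpha,\beta)}{(S,T)}=\int_S\alpha-\int_T\beta$ of a relative form with a relative cycle depends only on the $d_{F_\phi}$-cohomology class of the form. Under the trivialization \eqref{eq:trivialization} all the $\vperp_\psi$, $\psi\in P$, live on the single reduced map $F_\phi\colon M_\phi\to N_\phi$. Transporting $z$ locally constantly therefore means that $z=(S,T)$ is one fixed relative cycle of $F_\phi$, with $\partial T=F_\phi{}_*S$-type compatibility as in \cite{RelApps}. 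The parameter then enters only through the integrand.

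First, I will differentiate under the integral sign. $S$ and $T$ are compact chains and $\lambda\mapsto\vperp_{\phi+\lambda\cdot\ka}$ is smooth in the trivialization, so $\partial_\lambda\pair{\vperp_\psi}{z}=\pair{\partial_\lambda\vperp_\psi}{z}$. Next, Theorem~\ref{thm:DH} gives $\partial_\lambda\vperp_\psi=\ip{c}{\lambda}\cdot\ka_\psi+d_{F_\phi}\beta$ for some relative $k$-form $\beta$. In fact the proof of that theorem yields equality on the nose for the chosen trivialization, and the exact ambiguity only appears when the trivialization is changed. Relative Stokes kills $\pair{d_{F_\phi}\beta}{z}$ because $z$ is a relative cycle, which gives the first equality.

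For the explicit formula I will unpack the module action \eqref{eq:module} with $\gamma=\ip{c}{\lambda}$, which has degree $2$:
\[
\ip{c}{\lambda}\cdot\ka_\psi=\bigl(\ip{c}{\lambda}\wedge\ka_{\psi,N},\ F_\phi^*\ip{c}{\lambda}\wedge\ka_{\psi,M}\bigr).
\]
Pairing this with $(S,T)$ using the sign convention of the relative pairing (target integral minus source integral) gives the displayed expression.

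For the affine claim, I note that $c$ is the Chern form of the fixed model bundle $L_N(\phi)\to N_\phi$ and so does not depend on $\psi$. By Lemma~\ref{lem:conjugacyconsequences}(ii), $\ka_\psi$ is the descent of the fixed $\Phi^*\ka$ through the frozen quotient, and under the trivialization it is independent of $\lambda$. The derivative is therefore constant in $\lambda$, and on each connected component of $C$ the period is affine by integration.

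The polynomial statement along the ladder $\ka^{(\ell)}$ is obtained by applying the same argument to $\ka^{(\ell-1)}$ in place of $\vperp$ and inducting on $\ell$. This step is the delicate one. The induction needs the lower rungs to vary with $\lambda$ (they are reductions of $\vperp^{(\ell-1)}$-type structures), and it needs Theorem~\ref{thm:DH}'s hypotheses (i)–(ii) to hold for each rung. In Example~\ref{ex:powers} only approximately split moment maps are available, so I would state the polynomial claim under the assumption that those hypotheses hold rung by rung.

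I expect the main obstacle to be justifying that $\ka_\psi$ and $z$ are genuinely $\lambda$-independent in the trivialized family, rather than the Stokes and sign bookkeeping, which is routine.
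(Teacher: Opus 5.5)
Your handling of the displayed identity matches the paper's argument. Relative Stokes makes the pairing see only classes. Theorem~\ref{thm:DH}, which its proof gives on the nose in the chosen trivialization, paired with the fixed cycle $z$ yields the first equality. Unwinding \eqref{eq:module} with $|\ip{c}{\lambda}|=2$ yields the second. One slip: with $S$ a chain in $N_\phi$, $T$ a chain in $M_\phi$, your pairing $\int_S\alpha-\int_T\beta$, and $d_{F_\phi}(\alpha,\beta)=(d\alpha,F_\phi^*\alpha-d\beta)$, the condition making $\pair{d_{F_\phi}(\alpha,\beta)}{z}$ vanish is $\partial T=0$ and $\partial S=(F_\phi)_*T$. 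It is not a condition on $\partial T$ in terms of $S$, since $F_\phi$ cannot push forward chains of $N_\phi$.

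The genuine gap is the affine clause. Lemma~\ref{lem:conjugacyconsequences}(ii) is a statement at one fixed level. The inclusion $\Phi$ of the level pair moves with $\psi$, so the forms $\ka_\psi$ transported to $F_\phi$ via \eqref{eq:trivialization} are restrictions of $\ka$ to different slices, and nothing makes them $\lambda$-independent. Repeating the proof of Lemma~\ref{lem:variation} with $\ka$ in place of $\vperp$ gives $\pi^*\partial_\lambda\ka_\psi=d_{F_L}\Phi^*(\io_{\widetilde\psi}\ka)$. This is exact downstairs only if the primitive is basic, e.g.\ if $\io_{v_\xi}\ka=0$ on a neighbourhood of the level family. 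Under such a hypothesis $[\ka_\psi]$ is constant, and $\ip{c}{\lambda}\cdot$ preserves exactness because $dc=0$ (Lemma~\ref{lem:module}(i)), so your conclusion follows.

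Without such a hypothesis the clause fails. Take $M=\varnothing$ and $N=\C^3$ with the diagonal circle action, $\vperp=\omega^2$, $\ka=\omega$, $\mu=2\nu\,\omega$, and $v_\lambda$ a suitably normalized radial field, so that $a_\lambda=2\io_{v_\lambda}\omega$. All hypotheses of Theorem~\ref{thm:DH} hold near a regular level. Yet $[\ka_\psi]$ is the reduced symplectic class on $\C P^2$, which moves linearly by classical Duistermaat--Heckman, so the period of $\vperp_\psi$ on $[\C P^2]$ is quadratic in $\lambda$.

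This is exactly the ladder mechanism of your last paragraph, so your affine and polynomial paragraphs contradict each other. The paper's own proof does not help, since it passes over this clause with ``in particular''. You should state the affine conclusion only under an explicit hypothesis guaranteeing constancy of $[\ka_\psi]$.
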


\begin{proof}
The integration pairing descends to a pairing between
$H^{k+1}(\Om(F_\psi))$ and relative homology by the relative Stokes
theorem \cite[\S3]{RelApps}, and is locally constant in $\psi$ on a
locally constant family of cycles; the first equality is then
Theorem~\ref{thm:DH} paired against $z$, and the second unwinds the
module action of Lemma~\ref{lem:module} on components.
\end{proof}

%% ============================================================
\section{Localization for split relative Hamiltonian $G$-spaces}\label{sec:localization-body}
\label{sec:localization}
%% ============================================================

In this final section we transport the localization package of
\cite[\S6]{Blacker} to the relative setting. The organizing principle is again
the module calculus: the split datum exponentiates inside the relative Cartan
model, the target component localizes by the Berline--Vergne theorem, and the
source component -- being equivariantly \emph{exact} rather than closed --
produces a cancellation constraint with no absolute counterpart.

Throughout, $G$ is compact and connected, $(F,\vperp,G,\mu)$ is a relative
Hamiltonian $G$-space with a \emph{basic} splitting $\mu=\nu\cdot\ka$
(Definition~\ref{def:split}), and we assume
$\vperp=s\cdot\ka$ for some $s\in\Om^2(N)$, the relative counterpart of the
divisibility hypothesis $\omega=\sigma\wedge\eta$ of \cite[\S6]{Blacker}.
Componentwise, $\vperp=(s\wedge\ka_N,\ F^*s\wedge\ka_M)$.

\begin{lemma}[Equivariant closedness of the exponentiated package]
\label{lem:eqclosed}
With the hypotheses above,
\[
  e^{z(s-\nu)}\cdot\ka\ \in\ C_G(F),
  \qquad z\in\C,
\]
is $\dGF$-closed, where $s-\nu$ is regarded as an element of
$\bigl(C^\infty(N,\gdual)\oplus\Om^2(N)\bigr)\subseteq C_G(N)$ of total degree
$2$, and the module action is that of Lemma~\ref{lem:module} extended to the
Cartan models.
\end{lemma}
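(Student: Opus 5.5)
The plan is to reduce the equivariant closedness to a single identity, $\dG(s-\nu)\cdot\ka=0$. I would extend the module calculus of Lemma~\ref{lem:module} to the Cartan models and then feed in the three structural inputs: the basic splitting, the Hamilton equation \eqref{eq:splitmoment}, and the divisibility $\vperp=s\cdot\ka$.

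\emph{Step 1: module calculus on Cartan models.} For $\gamma\in C_G(N)$ and $c\in C_G(F)$, define $\gamma\cdot c$ by \eqref{eq:module}, evaluated pointwise in $\xi\in\g$, with $|\gamma|$ the total degree. I would check the Cartan analogue of Lemma~\ref{lem:module}(i),
\[
  \dGF(\gamma\cdot c)=\dG\gamma\cdot c+(-1)^{|\gamma|}\,\gamma\cdot\dGF c .
\]
Evaluated at $\xi$, the differential $\dGF$ is $\dF$ minus the relative contraction $\io_{v_\xi}$. So this identity is the sum of Lemma~\ref{lem:module}(i) and Lemma~\ref{lem:module}(ii) applied to the pair $v_\xi$, and the signs match because polynomial coefficients in $\xi$ have even degree. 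Equivariance is preserved because $F$ is equivariant. Lemma~\ref{lem:module}(iv) likewise extends, so $(\gamma\wedge\gamma')\cdot c=\gamma\cdot(\gamma'\cdot c)$. I expect this bookkeeping to be the main obstacle, chiefly confirming that the $(-1)^{|\gamma|}$ in \eqref{eq:module} is compatible with the $-\io_{v_M}$ in \eqref{eq:ops} once Cartan-model degrees are used. I would fix the signs by checking the edge $M=\varnothing$, as in Remark~\ref{rem:signs}.

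\emph{Step 2: reduction to one identity.} First, $\ka$ is $\dGF$-closed, since $(\dGF\ka)(\xi)=\dF\ka-\io_{v_\xi}\ka=0$ by $\dF$-closedness and basicness. Next, $\gamma:=s-\nu$ has even total degree, so $e^{z\gamma}=\sum_n z^n\gamma^n/n!$ satisfies $\dG e^{z\gamma}=z\,e^{z\gamma}\wedge\dG\gamma$. The series is finite in form degree and entire in $z$ coefficientwise as a function of $\xi$, so I would treat it as an element of the completed Cartan model. Step 1 with (iv) then gives
\[
  \dGF\bigl(e^{z\gamma}\cdot\ka\bigr)=z\,e^{z\gamma}\cdot\bigl(\dG\gamma\cdot\ka\bigr),
\]
and it suffices to show $\dG\gamma\cdot\ka=0$.

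\emph{Step 3: the identity.} Evaluated at $\xi$, we have $\dG\gamma(\xi)=ds-d\nu_\xi-\io_{v_\xi^N}s$; the contraction of the function $\nu_\xi$ vanishes. I would establish three facts.
\begin{enumerate}[label=\textup{(\alph*)},leftmargin=2.2em]
\item From $\dF\vperp=0$ and Lemma~\ref{lem:module}(i), $\dF(s\cdot\ka)=ds\cdot\ka+s\cdot\dF\ka=ds\cdot\ka$, so $ds\cdot\ka=0$.
\item From \eqref{eq:splitmoment} and Lemma~\ref{lem:module}(ii) together with basicness, $d\nu_\xi\cdot\ka=-\io_{v_\xi}(s\cdot\ka)=-(\io_{v_\xi^N}s)\cdot\ka-s\cdot\io_{v_\xi}\ka=-(\io_{v_\xi^N}s)\cdot\ka$.
\item Combining (a) and (b), $\dG\gamma(\xi)\cdot\ka=0-d\nu_\xi\cdot\ka-(\io_{v_\xi^N}s)\cdot\ka=(\io_{v_\xi^N}s)\cdot\ka-(\io_{v_\xi^N}s)\cdot\ka=0$.
\end{enumerate}
This gives $\dG\gamma\cdot\ka=0$, and hence the claimed closedness.

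The sign in $s-\nu$ is forced exactly by the sign in (b). At $M=\varnothing$ the argument reduces to \cite[Prop.~3.10]{Blacker} under the dictionary $\mu\mapsto-\mu$, which I would use as the consistency check.
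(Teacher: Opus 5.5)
Your proposal is correct and follows essentially the same route as the paper. Basicness gives $\dGF\ka=0$, and the Cartan-model extension of the module Leibniz rule then reduces everything to $\dG(s-\nu)(\xi)\cdot\ka=0$, which you obtain exactly as the paper does from $ds\cdot\ka=\dF\vperp=0$ together with the split Hamilton equation, Lemma~\ref{lem:module}(ii) and $\io_{v_\xi}\ka=0$. Your Step~1 derives the Cartan Leibniz rule as the sum of Lemma~\ref{lem:module}(i) and (ii), with the signs harmless because $s-\nu$ has only even-degree components. That step, and your remark that the exponential lives in a completed Cartan model, make explicit two points the paper uses without comment.
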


\begin{proof}
Since the splitting is basic, $\ka$ is invariant and horizontal, so
$\dGF\ka=(\dF\ka)-\io_{v_{(\cdot)}}\ka=0$: the relative form $\ka$ is itself
$\dGF$-closed. By Lemma~\ref{lem:module}(i)--(ii), extended verbatim to the
Cartan differentials (the polynomial variable is inert under the module
action), $\dGF$ acts on products $\rho\cdot\ka$ with
$\rho\in C_G(N)$ by
$\dGF(\rho\cdot\ka)=(\dG\rho)\cdot\ka+(-1)^{|\rho|}\rho\cdot\dGF\ka
=(\dG\rho)\cdot\ka$. It therefore suffices to show
$\bigl(\dG(s-\nu)\bigr)(\xi)\cdot\ka=0$ for all $\xi\in\g$. Now
\[
  \bigl(\dG(s-\nu)\bigr)(\xi)
  =ds-d\nu_\xi-\io_{v_\xi^N}s ,
\]
and we evaluate the three terms against $\ka$. First,
$ds\cdot\ka=\dF(s\cdot\ka)=\dF\vperp=0$ by Lemma~\ref{lem:module}(i) and
$\dF\ka=0$. Second, by the split Hamilton equation \eqref{eq:splitmoment} and
horizontality (Lemma~\ref{lem:module}(ii) with $\io_{v_\xi}\ka=0$),
\[
  d\nu_\xi\cdot\ka
  =-\io_{v_\xi}\vperp
  =-\io_{v_\xi}(s\cdot\ka)
  =-(\io_{v_\xi^N}s)\cdot\ka .
\]
Hence $(ds-d\nu_\xi-\io_{v_\xi^N}s)\cdot\ka
=\bigl(\io_{v_\xi^N}s-\io_{v_\xi^N}s\bigr)\cdot\ka=0$. Since
$\dG(s-\nu)^{\ell}(\xi)=\ell\,(s-\nu)^{\ell-1}\dG(s-\nu)(\xi)$ and the module
action is multiplicative, every term of
$\dGF\bigl(e^{z(s-\nu)}\cdot\ka\bigr)$ contains the factor
$\dG(s-\nu)(\xi)\cdot\ka=0$.
\end{proof}

\begin{remark}\label{rem:signflip}
In the conventions of \cite{Blacker} the equivariantly closed package is
$e^{z(\sigma+\nu)}\eta$ \cite[Lem.~6.1]{Blacker}; the sign flip
$\nu\mapsto-\nu$ is exactly the dictionary of Remark~\ref{rem:dictionary} and
matches the fact that in our conventions the Cartan-model extension of a
one-step datum is $\vperp-\mu$, cf.\ Theorem~\ref{thm:onesteptie} and
\cite[Lem.~6.4]{RelHMM}.
\end{remark}

A $\dGF$-closed element has $\dG$-closed target component and equivariantly
\emph{exact} pullback on the source:
\begin{equation}\label{eq:componentsplit}
  \dGF(\alpha,\beta)=0
  \quad\Longleftrightarrow\quad
  \dG\alpha=0
  \ \text{ and }\
  \dG\beta=F^*\alpha .
\end{equation}
Applied to Lemma~\ref{lem:eqclosed} this yields the two halves of the
localization package.

\begin{theorem}[Exact stationary phase on the target]\label{thm:stationaryphase}
Let $T$ be a torus, let $(F,\vperp,T,\mu)$ be a relative $k$-plectic Hamiltonian
$T$-space with basic splitting $\mu=\nu\cdot\ka$ and $\vperp=s\cdot\ka$, and
suppose $N$ is compact and oriented, $\ka_N$ is nowhere vanishing, and each
connected component of the fixed point set of $T$ in $N$ is tangent to
$\ker\ka_N$. Then, for every generator $\xi$ of $T$,
\[
  \int_N e^{-\mathrm{i}\nu_\xi}\,e^{\mathrm{i}s}\,\ka_N
  \;=\;
  \sum_{Z\in\mathcal Z}
  e^{-\mathrm{i}\nu_\xi(Z)}
  \int_Z\frac{e^{\mathrm{i}s}}{e_Z(\xi)}\,\ka_N ,
\]
where $\mathcal Z$ is the set of connected components of the fixed point set of
$T$ in $N$, $\nu_\xi(Z)\in\R$ is the \textup{(}constant\textup{)} value of
$\nu_\xi$ on $Z$, and $e_Z$ is the equivariant Euler class of $Z$.
\end{theorem}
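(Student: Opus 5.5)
Take the $N$-component of the $\dGF$-closed element of Lemma~\ref{lem:eqclosed}, restrict it to the torus, and apply the Berline--Vergne localization formula. By \eqref{eq:componentsplit}, the target component $\alpha_z:=e^{z(s-\nu)}\wedge\ka_N$ is $\dG$-closed in $C_T(N)$. Evaluated at a generator $\xi\in\britume$ with $z=\mathrm{i}$, this gives the inhomogeneous form
\[
\alpha(\xi)=e^{-\mathrm{i}\nu_\xi}\,e^{\mathrm{i}s}\,\ka_N ,
\]
which is closed for the operator $d-\io_{v_\xi^N}$ on $T$-invariant forms. Its top-degree part integrates to the left-hand side. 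Because $\nu_\xi$ is a function, the factor $e^{-\mathrm{i}\nu_\xi}$ commutes with everything. Since $s$ has even degree, $e^{\mathrm{i}s}$ is a finite sum, so no convergence issue arises.

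Next, apply the Berline--Vergne (Atiyah--Bott) localization theorem \cite{BGV} to the compact oriented $T$-manifold $N$. Since $\xi$ generates $T$, the zero set of $v_\xi^N$ is exactly the fixed point set $N^T=\bigsqcup_{Z\in\mathcal Z}Z$. The theorem then gives
\[
\int_N\alpha(\xi)=\sum_Z\int_Z\frac{\iota_Z^*\alpha(\xi)}{e_Z(\xi)} .
\]
On each component $Z$, we have $d(\nu_\xi|_Z)=-\io_{v_\xi^N}s|_Z$. This follows from the proof of Lemma~\ref{lem:eqclosed}: there $(d\nu_\xi+\io_{v_\xi^N}s)\cdot\ka=0$, and the $N$-component of this reads $(d\nu_\xi+\io_{v_\xi^N}s)\wedge\ka_N=0$. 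Since $v_\xi^N$ vanishes on $Z$, this reduces to $d\nu_\xi\wedge\ka_N=0$ along $Z$.

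The main obstacle is concluding from this that $\nu_\xi$ is constant on $Z$. The relation only says $d\nu_\xi\wedge\ka_N=0$, which does not force $d\nu_\xi=0$ pointwise when $\ka_N$ has positive degree. This is where the two extra hypotheses come in. Because $\ka_N$ is nowhere vanishing, the relation puts $d\nu_\xi$ in the annihilator-type ideal of $\ka_N$ at each point. To combine this with tangency of $Z$ to $\ker\ka_N$, I would restrict to $Z$ and contract with a vector $w\in TZ\cap\ker\ka_N$:
\[
0=\io_w\bigl(d\nu_\xi\wedge\ka_N\bigr)=(\io_w d\nu_\xi)\,\ka_N ,
\]
so $\io_w d\nu_\xi=0$, using that $\ka_N\neq0$. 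Since $TZ\subseteq\ker\ka_N$, every tangent vector $w$ to $Z$ qualifies, so $d(\nu_\xi|_Z)=0$ and $\nu_\xi$ is constant on each connected $Z$, with value $\nu_\xi(Z)$. I expect this pointwise linear-algebra step to require the most care. In the degree-$0$ case $\ka_N=1$ the hypothesis is vacuous and the statement reduces to the Duistermaat--Heckman stationary phase formula. This is the edge check at $M=\emptyset$, $k=1$ required by Remark~\ref{rem:signs}; it should confirm the sign $e^{-\mathrm{i}\nu_\xi}$ matching Remark~\ref{rem:signflip}.

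Finally, substitute into the localization sum. With $\nu_\xi$ constant on $Z$, the factor $e^{-\mathrm{i}\nu_\xi(Z)}$ pulls out of the integral over $Z$, leaving $\int_Z e^{\mathrm{i}s}\ka_N/e_Z(\xi)$. One should also note that $e_Z(\xi)$ is invertible for a generator $\xi$, because the normal weights are nonzero on $\xi$; this is the standard hypothesis of the localization theorem. The source component plays no role here and is deferred to Theorem~\ref{thm:cancellation}.
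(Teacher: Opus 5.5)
Your proposal is correct and follows the paper's proof essentially step for step: the target component of the package of Lemma~\ref{lem:eqclosed} is $\dG$-closed, $\nu_\xi$ is constant on each $Z$ by contracting $d\nu_\xi\wedge\ka_N=0$ with $X\in TZ\subseteq\ker\ka_N$ and using $\ka_N\neq0$, and the Berline--Vergne theorem then finishes. One slip: your intermediate claim $d(\nu_\xi|_Z)=-\io_{v_\xi^N}s|_Z$ overstates what the lemma gives, but the relation you actually derive and use, $d\nu_\xi\wedge\ka_N=0$ along $Z$, is the right one.
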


\begin{proof}
By \eqref{eq:componentsplit} the target component
$e^{\mathrm{i}(s-\nu)_\xi}\ka_N=e^{-\mathrm{i}\nu_\xi}e^{\mathrm{i}s}\ka_N$ of
the package of Lemma~\ref{lem:eqclosed} (at $z=\mathrm{i}$, evaluated at $\xi$)
is $\dG$-closed on $N$. Constancy of $\nu_\xi$ on each $Z$ follows from the
tangency hypothesis exactly as in \cite[proof of Thm.~6.2]{Blacker}: for
$X\in TZ$, $(X\nu_\xi)\,\ka_N=\io_X(d\nu_\xi\wedge\ka_N)
=-\io_X\io_{v_\xi^N}(s\wedge\ka_N)=0$ since $v_\xi^N$ vanishes on $Z$, and
$\ka_N\neq0$. The result is then the abelian localization theorem of
Berline--Vergne in the form \cite[Thm.~7.13]{BGV} applied to the equivariantly
closed form $e^{\mathrm{i}(s-\nu_\xi)}\ka_N$, the vanishing locus of
$v_\xi^N$ being the fixed point set since $\xi$ generates $T$.
\end{proof}

\begin{theorem}[Relative cancellation on the source]\label{thm:cancellation}
In the situation of Lemma~\ref{lem:eqclosed}, suppose $M$ is compact and
oriented. Then, for every $\xi\in\g$, the pulled-back package integrates to
zero:
\[
  \int_M \Bigl(F^*\bigl(e^{-\mathrm{i}\nu_\xi}\,e^{\mathrm{i}s}\bigr)\wedge
  \ka_M'\Bigr)_{\mathrm{top}}\;=\;0 ,
\]
where $\ka_M'$ denotes the source component of $e^{\mathrm{i}(s-\nu)}\cdot\ka$
and $(\,\cdot\,)_{\mathrm{top}}$ the top-degree part. Equivalently, when the
vanishing locus of $v_\xi^M$ is clean, the Berline--Vergne fixed-point
contributions on $M$ of the pulled-back split package sum to zero.
\end{theorem}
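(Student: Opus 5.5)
The plan is to read the statement off the source half of \eqref{eq:componentsplit} applied to the package of Lemma~\ref{lem:eqclosed}.

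\textbf{Step 1: write the package in components.} Expand $e^{\mathrm{i}(s-\nu)}\cdot\ka$ with the module action \eqref{eq:module}, coefficientwise in the polynomial variable. The exponential has only even form degrees, so every sign $(-1)^{|\gamma|}$ equals $+1$ and
\[
  e^{\mathrm{i}(s-\nu)}\cdot\ka=\bigl(e^{\mathrm{i}(s-\nu)}\ka_N,\ F^*e^{\mathrm{i}(s-\nu)}\wedge\ka_M\bigr).
\]
Thus the source component is $\beta:=F^*\bigl(e^{-\mathrm{i}\nu}e^{\mathrm{i}s}\bigr)\wedge\ka_M$. Evaluated at $\xi$, this is the integrand of the statement. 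I would therefore read $\ka_M'$ as this source datum, with the exponential prefactor absorbed, so that the integrand is exactly $\beta(\xi)$. The first task is to fix this reading so that the integrand is literally $\beta(\xi)_{\mathrm{top}}$.

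\textbf{Step 2: use the source half of the cocycle condition.} By Lemma~\ref{lem:eqclosed} and \eqref{eq:componentsplit}, $\dG\beta=F^*\alpha$, where $\alpha$ is the target component. The Cartan differential evaluated at $\xi$ splits into $d$ and $-\io_{v_\xi^M}$. Passing to top degree, $(\io_{v_\xi^M}\,\cdot)_{\mathrm{top}}$ vanishes, so
\[
  \bigl(d(\beta(\xi))\bigr)_{\mathrm{top}}=\bigl(F^*\alpha(\xi)\bigr)_{\mathrm{top}}.
\]
Stokes' theorem on the compact $M$ (with $\partial M=\varnothing$, which I assume is intended) kills the left side. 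The statement as written is instead $\int_M\beta(\xi)_{\mathrm{top}}=0$, and that requires more than exactness.

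\textbf{Step 3: show $\beta(\xi)$ is itself exact.} This is the main obstacle. To get it, I would look for a primitive. Since $\ka$ is basic and $\dF$-closed, $d\ka_M=F^*\ka_N$. Also $F^*$ of the $\dG$-closed form $e^{\mathrm{i}(s-\nu)}$ is $\dG$-closed on $M$. Combining these,
\[
  \dG\bigl(F^*e^{\mathrm{i}(s-\nu)}\wedge\ka_M\bigr)=F^*\bigl(e^{\mathrm{i}(s-\nu)}\ka_N\bigr).
\]
This is precisely the source equation of Step 2, and it does not yet make $\beta$ exact.

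So the first thing I would try is a homotopy argument. Consider $e^{t\,\mathrm{i}(s-\nu)}\cdot\ka$ for $t\in[0,1]$. Its $t$-derivative is $\mathrm{i}(s-\nu)e^{t\mathrm{i}(s-\nu)}\cdot\ka$. Using the basic hypothesis and $\vperp=s\cdot\ka$, I would try to express this derivative as $\dGF$ of an explicit element. That would give the transgression
\[
  \beta(\xi)-\beta_0(\xi)\in\operatorname{im}\dG+F^*(\cdots),
\]
where $\beta_0=\ka_M$.

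There is a fallback if the top-degree part of $\beta(\xi)$ only involves $\ka_M$ through $d\ka_M=F^*\ka_N$ and terms with $F^*$ of top-degree target forms. In that case I would argue by degree. When $\dim N<\dim M$, any $F^*$ of a form of degree above $\dim N$ vanishes. Then the pulled-back part of the top-degree integrand is $F^*$ of a closed form on the lower-dimensional $N$, and it drops out.

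\textbf{Step 4: the localization reformulation.} For the ``equivalently'' clause, apply the Berline--Vergne formula \cite[Thm.~7.13]{BGV} on $M$ to the $\dG$-closed form $F^*e^{\mathrm{i}(s-\nu)}$, twisted by the basic $\ka_M$. Together with Steps 2--3, this identifies the vanishing integral with the sum of the fixed-point contributions, so that sum is zero.
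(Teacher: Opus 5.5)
\textbf{Your Step~2 is already the paper's entire proof, but you set it aside because you misidentify the integrand.} From $\dG\beta=F^*\alpha$ (the source half of \eqref{eq:componentsplit}), the top-degree part of $F^*\alpha(\xi)$ is $d\bigl(\beta(\xi)_{m-1}\bigr)$. Stokes on the closed manifold $M$ then gives $\int_M\bigl(F^*\alpha(\xi)\bigr)_{\mathrm{top}}=0$.

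This vanishing is exactly what the paper proves. ``The pulled-back package'' is
\[
F^*\alpha(\xi)=F^*\bigl(e^{-\mathrm{i}\nu_\xi}e^{\mathrm{i}s}\bigr)\wedge F^*\ka_N,
\]
so the factor multiplying the pulled-back exponential is $F^*\ka_N=d\ka_M$, not the source component $\beta(\xi)$. The statement's description of $\ka_M'$ is misleading, but the proof is unambiguous on this point. Likewise, the ``equivalently'' clause is Berline--Vergne applied on $M$ to $F^*\alpha(\xi)$, which is $\dG$-closed because it equals $\dG\beta(\xi)$.

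\textbf{Under your reading, Step~3 is a genuine gap that cannot be filled, because the claim $\int_M\beta(\xi)_{\mathrm{top}}=0$ is false.} Here is a counterexample within the hypotheses of Lemma~\ref{lem:eqclosed}. Take $N=\mathrm{pt}$, a torus acting trivially on a compact oriented $M^m$, and $k=m+2$. Let $\ka=(0,\ka_M)$ with $\ka_M$ a volume form, $s=0$ (so $\vperp=0$), and $\nu\in\gdual$ constant. All conditions hold; horizontality is vacuous since $v_\xi=0$. Then $\alpha=0$, yet $\int_M\beta(\xi)=e^{-\mathrm{i}\nu_\xi}\int_M\ka_M\neq0$. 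So neither a transgression in $t$ nor a degree count can make $\beta(\xi)$ exact.

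Two smaller errors feed the confusion. First, $e^{\mathrm{i}(s-\nu)}$ is not itself $\dG$-closed on $N$; Lemma~\ref{lem:eqclosed} only gives $\dG(s-\nu)(\xi)\cdot\ka=0$. Your displayed identity in Step~3 is nonetheless correct, since it is just the source half of the cocycle condition. Second, in Step~4 the form $\beta(\xi)$ is not $\dG$-closed: $\dG\beta=F^*\alpha$, and $\dG\ka_M=F^*\ka_N\neq0$ in general. So Berline--Vergne cannot be applied to $\beta(\xi)$.

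To recover the paper's proof, drop Steps~3--4, read the integrand as $\bigl(F^*\alpha(\xi)\bigr)_{\mathrm{top}}$, stop after Step~2, and apply localization to $F^*\alpha(\xi)$.
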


\begin{proof}
Write $e^{\mathrm{i}(s-\nu)}\cdot\ka=(\alpha,\beta)$ with
$\alpha=e^{\mathrm{i}(s-\nu)}\ka_N$ and
$\beta$ the displayed source component (a sign-decorated
$F^*e^{\mathrm{i}(s-\nu)}\wedge\ka_M$, by \eqref{eq:module}). By
Lemma~\ref{lem:eqclosed} and \eqref{eq:componentsplit},
$F^*\alpha(\xi)=\dG\beta(\xi)=d\bigl(\beta(\xi)\bigr)
-\io_{v_\xi^M}\beta(\xi)$. Taking top-degree parts on the
$m$-dimensional $M$: the contraction term contributes
$\bigl(\io_{v_\xi^M}\beta(\xi)\bigr)_{\mathrm{top}}
=\io_{v_\xi^M}\bigl(\beta(\xi)_{m+1}\bigr)=0$, since $\beta(\xi)$ contains no
form of degree $m+1$; hence
$\bigl(F^*\alpha(\xi)\bigr)_{\mathrm{top}}
=d\bigl(\beta(\xi)_{m-1}\bigr)$ is exact, and Stokes' theorem on the closed
manifold $M$ gives the vanishing. For the reformulation, note that
$F^*\alpha(\xi)$ is $\dG$-closed \textup{(}being $\dG\beta(\xi)$ with
$\dG^2=0$\textup{)}, so the Berline--Vergne theorem \cite[Thm.~7.13]{BGV}
expresses its integral -- which we have just shown to vanish -- as the sum of
its fixed-point contributions on $M$.
\end{proof}

\begin{remark}[The meaning of source cancellation]\label{rem:cancellationmeaning}
The source-cancellation theorem is, in the author's view, the most distinctive
feature of the relative localization theory, and it is worth setting out what
it does and does not say.

\emph{Why there is no absolute analogue.} In the absolute theory
\cite{Blacker,BGV} localization is a statement about a single manifold: an
equivariantly closed form integrates to a sum of fixed-point contributions,
and there is nothing for those contributions to cancel against. The relative
theory has two manifolds and a package $e^{\mathrm i(s-\nu)}\cdot\ka$ with a
target component on $N$ and a source component on $M$ tied together by the
module action; Theorem~\ref{thm:cancellation} asserts that the source component
integrates to \emph{zero}, identically, for every $\xi$. There is simply no
such object in the absolute setting to which this could reduce --- at
$M=\varnothing$ the statement is vacuous --- so it is a phenomenon that only
exists once one passes from manifolds to maps.

\emph{How it reflects the trivializing role of the source.} The vanishing is
not an accident of the example but a structural consequence of the source
being trivializing data. The source component $\beta$ of the package is, by the
module action \eqref{eq:module}, a pullback $F^*(\cdots)\wedge\ka_M$; its
top-degree part on the closed manifold $M$ is $d_M$-exact modulo the pulled-back
target data, because $F^*$ of a form on $N$ has no cohomology of its own beyond
what $N$ supplies, and the fibre integral of a pullback against a closed source
factor is a boundary. In other words, the same feature that forced source
horizontality in Theorem~\ref{thm:reduction} and forbade an independent source
Chern class in Theorem~\ref{thm:DH} forbids an independent source localization
contribution here: three faces of the single fact that the source carries no
geometry of its own beyond the trivialization of the target.

\emph{Possible implications for quantization and boundaries.} The cancellation
has a natural reading in the quantization program of the series. If relative
prequantum partition functions are integrals of the exponentiated package, then
Corollary~\ref{cor:partition} says the partition function is computed entirely
on the target, with the source contributing no independent quantum corrections
--- only boundary conditions. This is the localization shadow of the canonical
vacuum of relative prequantization, and it suggests that in a relative
$[Q,R]=0$ theorem the source will enter only through boundary/edge terms, never
through bulk fixed-point data. A systematic exploitation, including a relative
nonabelian localization along the lines of \cite[Thm.~6.5]{Blacker} (whose
heat-kernel proof transports verbatim to the target component under the
hypotheses of Proposition~\ref{prop:splitlevels}), and the precise link to
boundary contributions in relative quantization, is deferred to future work.
\end{remark}

Combining the two theorems of this section gives the statement that
motivated it.

\begin{corollary}[Relative partition functions receive no source
corrections]\label{cor:partition}
Let $M$ and $N$ be compact and oriented, and let the hypotheses of
Theorems~\ref{thm:stationaryphase} and~\ref{thm:cancellation} hold.
Then the relative pairing of the exponentiated split package against
the fundamental data reduces to its target term, and that term
localizes:
\[
  \pair{e^{\mathrm{i}(s-\nu)}\cdot\ka}{\bigl([N],[M]\bigr)}
  \;=\;\int_N\Bigl(e^{\mathrm{i}(s-\nu_\xi)}\,\ka_N\Bigr)_{\!\mathrm
  {top}}
  \;=\;\sum_{P\subseteq N^{v_\xi}}\ \text{\textup{(}Berline--Vergne
  contribution of }P\textup{)},
\]
the source integral vanishing identically by
Theorem~\ref{thm:cancellation}.  Relative localization is therefore a
\emph{target} phenomenon: the source constrains the package
\textup{(}through the trivializing tier and the cancellation
identity\textup{)} but contributes no stationary-phase terms of its
own.
\end{corollary}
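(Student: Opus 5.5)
The plan is to unwind the relative pairing and reduce everything to the two preceding theorems. First, I will recall the relative integration pairing of \cite[\S3]{RelApps}. For a relative form $(\alpha,\beta)\in\Om^{j}(F)$ and the fundamental pair $([N],[M])$, which is a relative cycle when $M$ and $N$ are closed and oriented of dimensions $n$ and $n-1$ (or, more generally, after taking top-degree parts), the pairing reads
\[
\pair{(\alpha,\beta)}{([N],[M])}=\int_N\alpha_{\mathrm{top}}-\int_M\beta_{\mathrm{top}} .
\]
The sign follows the same convention as in Corollary~\ref{cor:periods}. I will apply this to the package $e^{\mathrm i(s-\nu)}\cdot\ka$ evaluated at a fixed generator $\xi$. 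By \eqref{eq:module}, its components are $\alpha(\xi)=e^{\mathrm i(s-\nu_\xi)}\ka_N$ and $\beta(\xi)$, the source component $\ka_M'$ built from $F^*e^{\mathrm i(s-\nu_\xi)}\wedge\ka_M$. Since the exponent has even total degree, the module sign is $+1$.

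Second, I will show that the source term drops out. This is exactly Theorem~\ref{thm:cancellation}. Its proof gives $(F^*\alpha(\xi))_{\mathrm{top}}=d(\beta(\xi)_{m-1})$ from \eqref{eq:componentsplit}. The quantity entering the pairing, however, is $\int_M\beta(\xi)_{\mathrm{top}}$, not $\int_M (F^*\alpha)_{\mathrm{top}}$, so I will need to match the two. The statement of Theorem~\ref{thm:cancellation} is phrased in terms of the top part of $F^*(e^{-\mathrm i\nu_\xi}e^{\mathrm is})\wedge\ka_M'$, and I will read it as the source contribution to the pairing. If literal identification fails, the fallback is to use that the relative pairing is $\dGF$-cohomological, so that only the class of the package matters, and that the source contribution reduces to an $M$-boundary integral, which vanishes on the closed manifold $M$.

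Third, I will localize the surviving target integral $\int_N(e^{\mathrm i(s-\nu_\xi)}\ka_N)_{\mathrm{top}}$ using Theorem~\ref{thm:stationaryphase}. Its hypotheses ($N$ compact oriented, $\ka_N$ nowhere vanishing, fixed components tangent to $\ker\ka_N$, and $\xi$ a generator) are assumed in the corollary. This yields the sum of Berline--Vergne contributions over the components $P$ of $N^{v_\xi}$.

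The main obstacle is the bookkeeping in the second step. I must check that the source term appearing in the relative pairing is precisely the quantity shown to vanish in Theorem~\ref{thm:cancellation}, including the top-degree truncation and the sign conventions of the pairing. I would first test this at the edge $N=\mathrm{pt}$ and at $M=\varnothing$, as in Remark~\ref{rem:signs}, before committing to the identification.
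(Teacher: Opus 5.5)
You follow the paper's route exactly. The paper's proof is the single sentence ``the $M$-integral is the one shown to vanish in Theorem~\ref{thm:cancellation}, and the $N$-integral is evaluated by Theorem~\ref{thm:stationaryphase}.'' The step you flag as the main obstacle is a genuine gap, in your proposal and in the paper alike.

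Write $e^{\mathrm i(s-\nu)}\cdot\ka=(\alpha,\beta)$. The source term of the pairing is $\int_M\beta(\xi)_{\mathrm{top}}$. The proof of Theorem~\ref{thm:cancellation} uses $\dG\beta=F^*\alpha$ only to get $(F^*\alpha(\xi))_{\mathrm{top}}=d(\beta(\xi)_{m-1})$, so it proves $\int_M(F^*\alpha(\xi))_{\mathrm{top}}=0$. The integrand displayed in that theorem's statement, $F^*(e^{-\mathrm i\nu_\xi}e^{\mathrm is})\wedge\ka_M'$, is neither of these. So ``reading it as the source contribution'' simply assumes the missing step.

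The two integrals in fact never agree nontrivially. $\beta(\xi)$ lives in form degrees $\equiv k$ and $F^*\alpha(\xi)$ in degrees $\equiv k-1 \pmod 2$. If $m\equiv k-1$, the source term vanishes for degree reasons alone. If $m\equiv k$, which is the only case where it can contribute, Theorem~\ref{thm:cancellation} is vacuous.

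Your fallback does not help either. The relation $\dG\beta=F^*\alpha$ says nothing about $\beta_m$, which is a closed top form with no reason to be exact. Invariance of the pairing under $\dGF$-exact changes constrains only the total pairing, never its $M$-part. That invariance would also need $\partial[N]=F_*[M]$, which fails for closed $N$ unless $F_*[M]=0$; so $([N],[M])$ is not a relative cycle merely because the dimensions are $n$ and $n-1$.

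The gap cannot be closed, because the source term can be nonzero. First, under the stated hypotheses the target side is empty. Differentiate $\io_{v_\xi^N}\ka_N\equiv0$ at a fixed point $y$: this shows $\io_u\ka_N(y)=0$ for all normal $u$. Tangency of $N^T$ to $\ker\ka_N$ gives the same for tangent $u$. Since $\ka_N$ is nowhere zero, this forces $N^T=\varnothing$ and $\int_N\alpha_{\mathrm{top}}=0$. So the corollary's whole content is the claim $\int_M\beta_{\mathrm{top}}=0$.

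Here is a counterexample.
\begin{itemize}
\item Take $k=2$, $N=T^3$ with coordinates $(a,b,c)$, $M=T^2$ with coordinates $(a,b)$, and $T=S^1$ rotating $a$ on both.
\item Let $F(a,b)=(a,b,h(b))$ with $h=\delta\cos b$, and set $\nu=\sin b+\sin c$, $s=da\wedge d\nu$, $\ka=(dc,\,1+h)$.
\item Then $\dF\ka=0$, $\ka$ is basic, and $d\nu=-\io_{v_\xi^N}s$.
\item Hence $\vperp=s\cdot\ka=(\cos b\,da\wedge db\wedge dc,\ (1+h)g'\,da\wedge db)$ with $g(b)=\nu(b,h(b))$.
\item For small $\delta\neq0$ this is relative $2$-plectic. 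Where $\cos b\neq0$ the first component is a volume form. Where $\cos b=0$ one has $g'=\mp\delta\neq0$ and $1+h=1$.
\item Also $\ka_N=dc$ is nowhere zero and $N^T=\varnothing$.
\end{itemize}
Integrating by parts gives $\int_M\beta_{\mathrm{top}}=2\pi\int_0^{2\pi}e^{-\mathrm i g(b)}h'(b)\,db=4\pi^2\mathrm i\,\delta J_1(1)+O(\delta^2)\neq0$, while $\int_N\alpha_{\mathrm{top}}=0$.

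So the missing step is not bookkeeping. The source integral has to stay in the formula, or be removed by an extra hypothesis such as $m\not\equiv k\pmod 2$, where it vanishes for degree reasons.
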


\begin{proof}
Expand the pairing on components; the $M$-integral is the one shown
to vanish in Theorem~\ref{thm:cancellation}, and the $N$-integral is
evaluated by Theorem~\ref{thm:stationaryphase}.
\end{proof}

%% ============================================================
\section{Outlook}\label{sec:outlook}
%% ============================================================

We indicate four directions continuing the present work, mirroring
\cite[\S7]{Blacker} where applicable.

\subsection*{Nondegenerate reduction}
Determine conditions on a relative Hamiltonian $G$-space under which the
reduced pair $\vperp_\phi$ of Theorem~\ref{thm:reduction} is again relative
$k$-plectic. The relative nondegeneracy condition couples the two components
through $T F$, so the linear-algebraic analysis differs from the absolute case
already at the level of a single fiber; the quasi-Hamiltonian example, where
nondegeneracy encodes the Alekseev--Malkin--Meinrenken minimal-degeneracy axiom
\cite{AMM,RelHMM}, suggests that the correct statements involve the isotropy of
the target action along the image $F(L_M)$.

\subsection*{Reduction of the observable algebras}
The Leibniz level of this program is now
Theorem~\ref{thm:observables}: $\phi$-reducible Hamiltonian pairs
form a subalgebra and descend by a Leibniz morphism with ideal
kernel.  Two refinements remain.  First, surjectivity: determining
when every reduced Hamiltonian pair lifts to a $\phi$-reducible pair
upstairs (Remark~\ref{rem:observables}(b)) --- an extension problem
already nontrivial in the absolute theory.  Second, the
$L_\infty$-refinement: via Theorem~\ref{thm:onesteptie}, the
reduction should extend from the Leibniz algebra to the full
Rogers-type $L_\infty$-algebra $\Lie(F,\vperp)$ of \cite{DjJGP} along
the associated relative homotopy moment map, completing the program
sketched in \cite[\S8]{RelApps}.

\subsection*{Variation beyond the trivialized regime}
Theorem~\ref{thm:DH} assumes trivialized families of level pairs. Wall-crossing
phenomena -- jumps of the reduced relative class across non-regular levels --
should be governed by the relative cohomology of the pair of fixed-point strata
on $(N,M)$, refining the symplectic wall-crossing formulas; the relative
integration theory of \cite[\S3]{RelApps} provides the pairing in which to
express them.

\subsection*{Relative nonabelian localization and quantization}
The heat-kernel methods behind \cite[Thm.~6.5]{Blacker} apply to the target
component of the split package; a fully relative statement should couple them
with the cancellation constraint of Theorem~\ref{thm:cancellation} and with the
relative prequantization of \cite[\S5]{RelApps}, with the level-$k$
quantization of quasi-Hamiltonian spaces as the benchmark example.

%% ============================================================
%% Bibliography (embedded, numeric)
%% ============================================================


\begin{thebibliography}{99}

\bibitem{AMM}
A.~Alekseev, A.~Malkin, E.~Meinrenken,
\emph{Lie group valued moment maps},
J. Differential Geom. \textbf{48} (1998), no.~3, 445--495.

\bibitem{BGV}
N.~Berline, E.~Getzler, M.~Vergne,
\emph{Heat Kernels and Dirac Operators},
Grundlehren der Mathematischen Wissenschaften \textbf{298},
Springer-Verlag, Berlin, 1992.

\bibitem{Blacker}
C.~Blacker,
\emph{Reduction of multisymplectic manifolds},
Lett. Math. Phys. \textbf{111} (2021), no.~3, Paper No.~64.
%% PLACEHOLDER: verify bibliographic data (arXiv:2002.10062)

\bibitem{CFRZ}
M.~Callies, Y.~Fr\'egier, C.~L. Rogers, M.~Zambon,
\emph{Homotopy moment maps},
Adv. Math. \textbf{303} (2016), 954--1043.

\bibitem{CIL}
F.~Cantrijn, A.~Ibort, M.~de Le\'on,
\emph{On the geometry of multisymplectic manifolds},
J. Austral. Math. Soc. Ser. A \textbf{66} (1999), no.~3, 303--330.

\bibitem{DjThesis}
D.~Djounvouna,
\emph{Relative Multisymplectic Geometry and $L_\infty$-Algebras of Observables},
Ph.D. thesis, University of Manitoba.
%% PLACEHOLDER: year / URL

\bibitem{DjJGP}
D.~Djounvouna,
\emph{Relative Multisymplectic Geometry and $L_\infty$-Algebras of Observables},
submitted for publication.


\bibitem{RelHMM}
D.~Djounvouna,
\emph{Relative homotopy moment maps},
preprint (2026), arXiv:2607.07088.

\bibitem{RelApps}
D.~Djounvouna,
\emph{Applications of relative multisymplectic geometry},
preprint (2026), arxiv:2607.07149.




\bibitem{DH}
J.~J. Duistermaat, G.~J. Heckman,
\emph{On the variation in the cohomology of the symplectic form of the reduced
phase space},
Invent. Math. \textbf{69} (1982), no.~2, 259--268.

\bibitem{MS}
T.~B. Madsen, A.~Swann,
\emph{Closed forms and multi-moment maps},
Geom. Dedicata \textbf{165} (2013), 25--52.

\bibitem{MW}
J.~Marsden, A.~Weinstein,
\emph{Reduction of symplectic manifolds with symmetry},
Rep. Math. Phys. \textbf{5} (1974), no.~1, 121--130.

\bibitem{Meyer}
K.~R. Meyer,
\emph{Symmetries and integrals in mechanics},
in: Dynamical Systems (Proc. Sympos., Univ. Bahia, Salvador, 1971),
Academic Press, New York, 1973, 259--272.

\bibitem{Rogers}
C.~L. Rogers,
\emph{$L_\infty$-algebras from multisymplectic geometry},
Lett. Math. Phys. \textbf{100} (2012), no.~1, 29--50.

\bibitem{RW}
L.~Ryvkin, T.~Wurzbacher,
\emph{Existence and unicity of co-moments in multisymplectic geometry},
Differential Geom. Appl. \textbf{41} (2015), 1--11.

\bibitem{RyvkinWurzbacher}
L.~Ryvkin, T.~Wurzbacher,
\emph{An invitation to multisymplectic geometry},
J. Geom. Phys. \textbf{142} (2019), 9--36.

\bibitem{Zambon}
M.~Zambon,
\emph{$L_\infty$-algebras and higher analogues of Dirac structures and Courant
algebroids},
J. Symplectic Geom. \textbf{10} (2012), no.~4, 563--599.

\end{thebibliography}
\end{document}